\documentclass[a4paper]{amsart}

\usepackage{amsmath}
\usepackage{amssymb}
\usepackage{amsfonts}
\usepackage{mathrsfs}
\usepackage{array}
\usepackage{enumerate}
\usepackage[french, english]{babel}
\usepackage[all]{xy}
\usepackage[T1]{fontenc}
\usepackage{float}
\usepackage{dsfont}

\numberwithin{equation}{section}

\newcommand{\mss}{\mathscr{S}}\newcommand{\mst}{\mathscr{T}}

     \newcommand{\BF}{{\mathbb {F}}}

    \newcommand{\BQ}{{\mathbb {Q}}}

     \newcommand{\BZ}{{\mathbb {Z}}}

    \newcommand{\CC}{{\mathcal {C}}} 
     \newcommand{\CF}{{\mathcal {F}}}

    \newcommand{\CO}{{\mathcal {O}}}

     \newcommand{\fd}{{\mathfrak{d}}}

     \newcommand{\fA}{{\mathfrak{A}}} \newcommand{\fB}{{\mathfrak{B}}}
     
     \newcommand{\fF}{{\mathfrak{F}}}

     \newcommand{\fN}{{\mathfrak{N}}}

    \newcommand{\ord}{{\mathrm{ord}}}

    \renewcommand{\mod}{\ \mathrm{mod}\ }

    \theoremstyle{plain}
    \newtheorem{thm}{Theorem}[section] \newtheorem{cor}[thm]{Corollary}
    \newtheorem{lem}[thm]{Lemma}  \newtheorem{prop}[thm]{Proposition}

\newtheorem{exm}[thm]{Example}

\theoremstyle{remark} 
\theoremstyle{remark} 
\theoremstyle{remark} 

    \numberwithin{equation}{section}

    \numberwithin{equation}{section}

\DeclareFontFamily{U}{wncy}{}
\DeclareFontShape{U}{wncy}{m}{n}{<->wncyr10}{}
\DeclareSymbolFont{mcy}{U}{wncy}{m}{n}

\begin{document}

\title[Iwasawa invariants of $K_{2m}$ over $\BZ_2$-extensions]{Iwasawa Invariants of even $K$-groups of rings of integers in the $\BZ_2$-extension over real quadratic number fields}

\author[Li-Tong Deng]{Li-Tong Deng}
\address{\textit{Li-Tong Deng}, Qiuzhen College, Tsinghua University, 100084, Beijing, China}
\email{\it dlt23@mails.tsinghua.edu.cn}

\author[Yong-Xiong Li]{Yong-Xiong Li}
\address{\textit{Yong-Xiong Li}, Yanqi Lake Beijing Institute of Mathematical Sciences and Applications, No. 544, Hefangkou Village Huaibei Town, Huairou District Beijing 101408, Beijing, China}
\email{\it yongxiongli@gmail.com}

\begin{abstract}
Let $F$ be a real quadratic number field, and let $F_{cyc}$ denote its cyclotomic $\BZ_2$-extension. For each integer $n\geq0$, let $F_n$ be the unique intermediate field in $F_{cyc}$ such that $[F_n:F]=2^n$.
By studying the $2$-adic divisibility of Dirichlet $L$-series at negative integers, we derive an asymptotic formula that determines the order of the $2$-primary part of even $K$-groups of rings of integers of $F_n$ for sufficiently large $n$. As a corollary, we determine their $\lambda$ and $\mu$ invariants. We also establish a lower bound for $n$ beyond which this asymptotic formula holds. Our results have two main applications: (1) For $K=\BQ$, $\BQ(\sqrt{p})$ or $\BQ(\sqrt{2p})$ with $p\equiv\pm3\mod 8$, we determine the structure of the $2$-primary tame kernels $K_2\CO_{K_n}(2)$; (2) We explicitly determine the three Iwasawa invariants $\lambda,\mu,\nu$ for a family of real quadratic number fields, whose discriminants have arbitrarily many prime divisors. 
\end{abstract}

\subjclass[2010]{11R70 (primary), 11R42, 11R23 (secondary).}

\keywords{Even $K$-groups, Birch-Tate formula, Iwasawa invariants.}

\maketitle

\selectlanguage{english}

\section{Introduction}

\subsection{Background}
For any integer $m$, let $\mu_{m}$ denote the group of $m$-th roots of unity. Let $p$ be a prime and set $\mu_{p^\infty}=\bigcup_{n\geq1}\mu_{p^n}$. For any number field \(\CF\), let $\CF_{cyc}$ be the cyclotomic $\BZ_p$-extension over $\CF$. This is defined as the subfield of $\CF(\mu_{p^\infty})$ fixed by the torsion subgroup of the Galois group ${\rm Gal}(\CF(\mu_{p^\infty})/\CF)$.
For each integer $n\geq0$, we define $\CF_n$ as the unique intermediate field in $\CF_{cyc}$ such that $[\CF_n:\CF]=p^n$. 
Iwasawa \cite{Iwasawa0} (see also \cite{Iwasawa}) proved the existence of integers \(\mu_\CC,\lambda_\CC,\nu_\CC\), which depend  solely on $\CF$ and $p$. These Iwasawa invariants describe the asymptotic behavior of the order of the $p$-primary part of the ideal class group. Specifically, if $e_n$  
is the exponent such that  $p^{e_n}$ is the order of the $p$-primary part \(Cl(\CF_n)(p)\) of the ideal class group of \(\CF_n\), then for $n\geq n_{\CC,p}$, we have 
\[e_n=\mu_\CC \cdot p^n+\lambda_\CC\cdot n+\nu_\CC.\]
Here $n_{\CC,p}$ is a sufficiently large positive integer. In general, determining the Iwasawa invariants $\lambda_\CC,\mu_\CC$, $\nu_\CC$, and  the precise value of $n_{\CC,p}$ is a challenging problem.  

\medskip

A significant breakthrough was made by Ferrero and Washington \cite{FW}, who proved that $\mu_\CC=0$ if $\CF$ is an abelian extension over $\BQ$. Prior to this, Iwasawa himself \cite{I0} has shown that \(\mu_\CC=0\) when $\CF=\BQ$ or an imaginary quadratic field, and $p=2$. Later, under the same assumptions as Iwasawa, Kida \cite{Kida} provided an explicit formula for the \(\lambda\)-invariants (see also \cite{Ferrero}) and established a lower bound for \(n_{\CC,2}\).   

\medskip

It's well known that the ideal class group of a number field $\CF$ can be identified with the reduced $K_0$ group of its ring of integers $\CO_\CF$. Regarding other $K$-groups of $\CO_\CF$, Quillen \cite{Quillen0} and Borel \cite{Borel} demonstrated that $K_{2m}\CO_{\CF}$ is finite for positive integer $m$ (see also \cite{Garland}). As is customary, we refer to $K_2\CO_\CF$ as the tame kernel of $\CF$.

Let $p^{e(n)}$ denote the order of the $p$-primary part $K_{2m}\CO_{\CF_n}(p)$. Coates \cite{Coates}, Ji and Qin \cite{JQ} have established the following significant result:

\begin{thm}
There exists three integers $\mu,\lambda,\nu$, depending solely on $\CF$, $m$ and $p$, such that for all sufficiently large $n\geq n_{\CF,p}$:
\[e(n)=\mu\cdot p^n+\lambda\cdot n+\nu.\]
Here, $n_{\CF,p}$ is a sufficiently large positive integer.
\end{thm}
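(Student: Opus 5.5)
The plan is to realize $K_{2m}\CO_{\CF_n}(p)$, up to uniformly bounded error, as the coinvariants (or $\Gamma_n$-descent) of a finitely generated torsion Iwasawa module, and then apply the structure theorem for $\Lambda=\BZ_p[[T]]$-modules. Here $\Gamma={\rm Gal}(\CF_{cyc}/\CF)$ and $\Gamma_n=\Gamma^{p^n}$. This is exactly Iwasawa's classical argument for class groups, with a Tate twist.

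\textbf{Identification with étale cohomology.} For $p$ odd (and, with the usual care at the real places, for $p=2$), the theorems of Quillen--Lichtenbaum type due to Soulé, Rost--Voevodsky and Tate give
\[K_{2m}\CO_{\CF_n}(p)\cong H^2_{et}(\CO_{\CF_n}[1/p],\BZ_p(m+1)).\]
For $p=2$ the comparison holds up to a $2$-group whose order is bounded independently of $n$. The reason is that the number of real places of $\CF_n$ is $2^n r_1$, but the relevant defect is controlled, as in Rognes--Weibel, by an eventually stable term. A contribution of order growing like $2^n$ could only alter $\mu$ and would itself be of the form $a\cdot 2^n+b$, so it can be absorbed. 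Set
\[\mathcal{X}=\varprojlim_n H^2(\CO_{\CF_n}[1/p],\BZ_p(m+1)).\]
By Shapiro's lemma this is $H^2$ of Iwasawa cohomology, and it is a finitely generated $\Lambda$-module. It is torsion because the weak Leopoldt conjecture holds for the cyclotomic $\BZ_p$-extension. Consequently $\mathcal{X}$ has a pseudo-isomorphism to an elementary module $\bigoplus\Lambda/(p^{a_i})\oplus\bigoplus\Lambda/(f_j)$. Define $\mu=\sum a_i$ and $\lambda=\sum\deg f_j$.

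\textbf{Descent (main obstacle).} Since $\mathrm{cd}_p(G_S)\le 2$, the Hochschild--Serre spectral sequence gives
\[\mathcal{X}_{\Gamma_n}\cong H^2(\CO_{\CF_n}[1/p],\BZ_p(m+1))\]
for $p$ odd, and up to bounded error for $p=2$. The key point is to control $\omega_n=(1+T)^{p^n}-1$ against $\mathcal{X}$. Because $m\ge1$, the twist $\BZ_p(m+1)$ has no $\Gamma$-invariants. This means the characteristic polynomial of $\mathcal{X}$ (essentially the $p$-adic $L$-function at $s=-m$, shifted by the twist) is coprime to every $\omega_n$. Equivalently, $\mathcal{X}/\omega_n$ is finite for all $n$, which is precisely what keeps each $K_{2m}\CO_{\CF_n}(p)$ finite. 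With this coprimality in hand, Iwasawa's lemma applies: if $M$ is finitely generated torsion, $E$ is elementary, $M\to E$ is a pseudo-isomorphism, and ${\rm char}(M)$ is prime to all $\omega_n$, then
\[|M/\omega_n M|=p^{\mu p^n+\lambda n+\nu}\quad\text{for } n\gg0.\]
To prove it, compare $|E/\omega_n E|$ directly. We have $|\Lambda/(p^a,\omega_n)|=p^{ap^n}$. For distinguished $f$ coprime to $\omega_n$, $|\Lambda/(f,\omega_n)|$ equals $p^{v(\mathrm{Res}(f,\omega_n))}$, and this is $p^{\deg f\cdot n+c}$ once $n$ is large, because the roots of $f$ eventually lie closer to $0$ than the primitive $p^n$-th-root-minus-one points $\zeta-1$. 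Finally, the finite kernel and cokernel of $M\to E$ change orders only by a constant for large $n$, via the snake lemma applied to multiplication by $\omega_n$.

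The descent step is the step I expect to require the most care. It must supply both the vanishing of $H^0$-twisted terms, which gives coprimality to $\omega_n$, and the $2$-adic archimedean correction. I would handle the correction by working instead with the positive étale cohomology $H^2_+$, which kills real places, and then absorbing the $H^1(\BR)$ terms into the constant $\nu$.
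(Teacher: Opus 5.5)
The paper does not prove this statement. It quotes it from Coates and Ji--Qin, and recovers the formula independently only for $p=2$ and the fields $\BQ$, $\BQ(\sqrt d)$, via Theorem~\ref{4-1} and explicit $L$-values. Your Iwasawa-theoretic strategy is the standard one. For odd $p$, or for $p=2$ with $\CF$ totally imaginary (where $\mathrm{cd}_2(G_S)=2$ and $K_{2m}\CO_{\CF_n}\{2\}\cong H^2$), it is sound.

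One correction in that range. Coprimality of ${\rm char}(\mathcal{X})$ with every $\omega_n$ does not come from $\BZ_p(m+1)$ ``having no $\Gamma$-invariants''. It is equivalent to finiteness of $H^2(\CO_{\CF_n}[1/p],\BZ_p(m+1))$ at every level, which is Soul\'e's theorem, or Borel's finiteness transported through the comparison, as you also note.

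\textbf{The gap at $p=2$.} The genuine gap is $p=2$ when $\CF$ has real places, which is exactly the setting of this paper. Your claim that $K_{2m}\CO_{\CF_n}(2)$ and $H^2(\CO_{\CF_n}[1/2],\BZ_2(m+1))$ differ by a group of order bounded in $n$ is false.

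Take $\CF=\BQ$ and $K_6$ (twist $\BZ_2(4)$). Theorem~\ref{4-1}, Lemma~\ref{4-2} and Theorem~\ref{zeta-2}(1) give $\ord_2|K_6\CO_{\BQ_n}|=(n+4)+(2^n-n-4)-2^n=0$ for all $n$. On the other hand, by Poitou--Tate the cokernel of $H^2(\CO_{\BQ_n}[1/2],\BZ_2(4))\to\bigoplus_{v\in S}H^2(\BQ_{n,v},\BZ_2(4))$ is $H^0(\BQ_n,\BQ_2/\BZ_2(-3))^\vee\cong\BZ/2\BZ$. The local term at $2$ already maps onto this cokernel. Hence $H^2(\CO_{\BQ_n}[1/2],\BZ_2(4))$ surjects onto $\bigoplus_{v\mid\infty}H^2(\mathbb{R},\BZ_2(4))\cong(\BZ/2\BZ)^{2^n}$. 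The defect therefore has order at least $2^{2^n}$. It is exactly what moves $\mu$ between $[F:\BQ]$ and $0$ in the proof of Theorem~\ref{main-thm1}.

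Your fallback, that such a term ``would itself be of the form $a\cdot 2^n+b$'', is the very thing that must be proved. You need an exact Rognes--Weibel-type comparison of $K_{2m}\CO_{\CF_n}\{2\}$ with (positive) \'etale cohomology, depending on $2m \bmod 8$. After that you need either the exact size, for all large $n$, of the relevant signature-type map to $(\BZ/2\BZ)^{r_1(\CF_n)}$, or a complete Iwasawa theory for $H^2_+$ along the tower. Absorbing the archimedean terms into $\nu$, as you suggest at the end, cannot work, since $\CF_n$ has $r_1 2^n$ real places.

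\textbf{The descent step.} This has the same flaw in milder form: $\mathrm{cd}_2(G_S)=\infty$ for fields with real places, so Hochschild--Serre does not give $\mathcal{X}_{\Gamma_n}\cong H^2$ directly. This part is repairable, but it must be argued. The control isomorphism $R\Gamma_{\mathrm{Iw}}\otimes^{L}_{\Lambda}\Lambda/\omega_n\simeq R\Gamma(\CO_{\CF_n}[1/2],T)$ gives an exact sequence $0\to H^2_{\mathrm{Iw}}/\omega_n\to H^2(\CO_{\CF_n}[1/2],T)\to H^3_{\mathrm{Iw}}[\omega_n]\to0$. Since real places split completely in $\CF_{cyc}$, $H^3_{\mathrm{Iw}}$ is $0$ or $(\Lambda/2\Lambda)^{r_1}$, and this has no $\omega_n$-torsion.

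For the fields it treats, the paper sidesteps all of this with the exact formula of Theorem~\ref{4-1}. There the archimedean defect appears explicitly as the factor $2^{-[F:\BQ]}$.
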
    

\bigskip

Similar to ideal class groups, a natural question arises regarding the Iwasawa invariants  $\mu,\lambda,\nu$ and $n_{\CF,p}$. Apart from Candiotti's work \cite{Candiotti}, there appears to be little progress on this question. This paper will study the invariants  $\mu,\lambda,\nu$ and $n_{\CF,p}$ under the condition that $p=2$ and $\CF$ is either $\BQ$ or a real quadratic number field. Henceforth, we set $p=2$ and denote $n_{\CF,p}$  by $n_\CF$.

\subsection{Main Theorems}

Let $d$ be a square-free positive integer, and let $K=\BQ(\sqrt{d})$. Denote by $K_{cyc}$ the cyclotomic $\BZ_2$-extension over $K$, and let $K_n$ be the unique intermediate field in $K_{cyc}$ such that $[K_n:K]=2^n$. Specifically, when $d=1$, $K=\BQ$, and when $d=2$, $K=\BQ(\sqrt{2})$, which is contained in $\BQ_{cyc}$, with $\BQ(\sqrt{2})=\BQ_1$. Since we are primarily concerned with the asymptotic behavior of $K_{{2m-2}}\CO_{K_n}(2)$ for sufficiently large $n$, we assume that $d$ is odd. Here we assume that $m$ is any positive even integer. Denote by ${\rm ord}_2$ the order function on $\overline{\BQ}$, normalized by ${\rm ord}_2(2)=1$.

\begin{thm}\label{main-thm1}
Let $d$ be a square-free positive odd integer, and let $K=\BQ(\sqrt{d})$ be a real quadratic number field. Assume that $m$ is even.  There exists integers \(\mu,\lambda,\nu,n_K\) such that the order of the $2$-primary part \(K_{2m-2}\mathcal{O}_{K_n}(2)\) of \(K_{2m-2}\mathcal{O}_{K_n}\) satisfies
\[ord_2(|K_{2m-2}\mathcal{O}_{K_n}(2)|)=\mu\cdot 2^n+ \lambda\cdot n+\nu\]
for all \(n\ge n_K\). Precisely, we have $\mu=2$ if $\ord_2(m)=1$, $\mu=0$ if $4$ divides $m$; $\nu=\nu'_{m,d}+2+{\rm ord}_2(m)$ where $\nu'_{m,d}$ is defined in Theorem \ref{zeta-2}, and
\[\lambda=-1+\sum_{p\mid d}2^{f_p},\]
where, for any odd prime $p$, we denote by $f_p={\rm ord}_2\left(\frac{p^*-1}{4}\right)$, with $p^*=\left(\frac{-1}{p}\right)p$. Here $\left(\frac{\cdot}{p}\right)$ is the Legendre symbol.
\end{thm}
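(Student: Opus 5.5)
We will use the Birch--Tate formula together with its higher analogue (the Wiles' proof of the $2$-part of the Lichtenbaum conjecture for totally real fields, via Rognes--Weibel). For $m$ even, the field $K_n$ is totally real, and the formula reads
\[
|K_{2m-2}\CO_{K_n}(2)| = 2^{r_1(K_n)}\cdot \big|w_m(K_n)\,\zeta_{K_n}(1-m)\big|_2^{-1}\quad(\text{up to the correct normalization}),
\]
with $r_1(K_n)=2^{n+1}$ and $w_m(K_n)$ the largest $N$ for which ${\rm Gal}(K_n(\mu_N)/K_n)$ has exponent dividing $m$. Our first step is therefore to compute ${\rm ord}_2(w_m(K_n))$. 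Since $d$ is odd, $K_n\cap\BQ(\mu_{2^\infty})=\BQ_n$, and we obtain ${\rm ord}_2(w_m(K_n))=n+2+{\rm ord}_2(m)$ for $n$ large. This contributes the term $n$, the constant $2+{\rm ord}_2(m)$, and the $2$-adic unit factors arising from odd primes, which we absorb into $\nu'_{m,d}$. The $2^{r_1}$ factor is exactly cancelled by the $2^{-[K_n:\BQ]}$ normalization when one writes $\zeta_{K_n}(1-m)$ as a product of $L$-values. We will track this bookkeeping carefully, because it is what decides whether $\mu=2$ or $0$.

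Next we factor $\zeta_{K_n}(1-m)=\prod_{\chi}L(1-m,\chi)$, where $\chi$ runs over the $2^{n+1}$ even characters $\psi$ and $\psi\chi_d$ with $\psi$ a character of ${\rm Gal}(\BQ_n/\BQ)$ (of $2$-power conductor). We then invoke Theorem \ref{zeta-2}, which we take to give the $2$-adic valuation of $\zeta_{K}(1-m)$-type factors and defines $\nu'_{m,d}$, for the characters of small conductor. For $\psi$ of conductor $2^{k+2}$ with $k\ge1$, we express $L(1-m,\psi)$ and $L(1-m,\psi\chi_d)$ through the $2$-adic $L$-function: $L_2(1-m,\psi\omega^{m})$ equals the Iwasawa power series $g_{\chi}(\zeta_\psi(1+q)^{m}-1)$, up to Euler factors at $2$. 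The valuations are then governed by the Weierstrass preparation of these series. For the trivial-tame part (the characters $\psi$ alone, i.e.\ $\BQ_{cyc}$), the series has $\lambda=0$ and unit constant up to the factor coming from $(1+q)^m-1$. This gives each conjugacy class valuation ${\rm ord}_2(\zeta_\psi-1)$-type terms, which sum to the $2^n$-growth. When ${\rm ord}_2(m)=1$, the quantity $\zeta_\psi(1+q)^m-1$ has valuation independent of $\psi$ in a way that makes each of the roughly $2^{n+1}$ factors contribute $1$, giving $\mu=2$ after cancellation. When $4\mid m$, everything is absorbed and $\mu=0$.

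For the twisted part $\psi\chi_d$, we will use Kida's formula in its Kida/Ferrero form for the $\lambda$-invariant of the minus-type quotient, or equivalently compute directly via the Euler factors. The contribution of $\chi_d$ has $\lambda$-invariant $\sum_{p\mid d}2^{f_p}-1$. This reflects how each $p\mid d$ splits in $\BQ_{cyc}$ into $2^{f_p}$ primes, since $f_p={\rm ord}_2((p^*-1)/4)$ measures the decomposition of $p$ in $\BQ_{cyc}$. Concretely, we expect to prove ${\rm ord}_2 L(1-m,\psi\chi_d)=\lambda\,{\rm ord}_2(\zeta_\psi-1)$ for conductor large. Summing over the $2^{k}$ conjugates of conductor $2^{k+2}$ gives $\lambda$ per level, and hence the $\lambda n$ term.

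The main obstacle will be this last step, namely determining the exact Weierstrass degree of the power series attached to $\chi_d\omega^m$. We plan to attack it via Kida's formula. The $\lambda$-invariant of $\BQ(\sqrt{-d})$ or $\BQ(\sqrt{d})$ relative to $\BQ$ (with $\lambda=\mu=0$ for $\BQ$) is $-1+\sum_{p\mid d}(\text{number of primes of }\BQ_{cyc}\text{ above }p)$. We then identify the number of such primes with $2^{f_p}$ by computing the decomposition of $p$ in $\BQ(\mu_{2^\infty})^+$. The remaining work is to verify that the constant terms stabilize, which gives the bound $n\ge n_K$ and the stated $\nu$.
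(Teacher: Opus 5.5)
Your plan has genuine gaps, beginning with the bookkeeping that decides $\mu$. The Rognes--Weibel formula (Theorem~\ref{4-1}) reads as follows: ${\rm ord}_2|K_{2m-2}\CO_{K_n}(2)|={\rm ord}_2(w_m(K_n)\zeta_{K_n}(1-m))$ if ${\rm ord}_2(m)=1$, and it is the same quantity minus $[K_n:\BQ]=2^{n+1}$ if $4\mid m$. There is no factor $2^{r_1}$, and nothing in the factorization of $\zeta_{K_n}(1-m)$ into $L$-values cancels this correction. Since ${\rm ord}_2\zeta_{K_n}(1-m)$ grows like $2\cdot 2^n$ for \emph{every} even $m$, the dichotomy $\mu=2$ versus $\mu=0$ is exactly this $-2^{n+1}$.

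The mechanism you propose for $\mu$ is false. For $\psi$ of conductor $2^{k+2}$ with $k\geq1$, one has ${\rm ord}_2(\zeta_\psi(1+q)^m-1)={\rm ord}_2(\zeta_\psi-1)=2^{1-k}$ independently of $m$, because ${\rm ord}_2((1+q)^m-1)\geq 3$. Summed over the $2^{k-1}$ conjugates, these terms give $1$ per level, so they feed the linear term, not the $2^n$ term. The $2^n$ growth comes from the constant $1$ in
\[{\rm ord}_2L(\chi_k,1-m)=1-2^{1-k},\qquad {\rm ord}_2L(\chi_k\psi_d,1-m)=1+2^{1-k}\left(-1+\sum_{p\mid d}2^{f_p}\right),\]
which is what the paper proves (Proposition~\ref{sec3-5}, Theorem~\ref{sec3-3-1}). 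In your language, both power series have $\mu=1$ at $p=2$. The trivial-tame series is not a unit: $L(\chi_k,1-m)$ is the numerator divided by a pole factor of valuation $2^{1-k}$, so a unit numerator would make it non-integral. Your target ${\rm ord}_2L(1-m,\psi\chi_d)=\lambda\,{\rm ord}_2(\zeta_\psi-1)$ drops the $1$, which loses one of the two copies of $2^n$. Nothing in your outline produces this $\mu=1$; Ferrero--Washington concerns class groups.

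The $\lambda$-step is likewise only a hope. Kida's formula (in Ferrero's form) is about minus parts: it gives the $\lambda$-invariant of the $2$-class groups along $\BQ(\sqrt{-d})_{cyc}$ and says nothing about $\BQ(\sqrt{d})$ itself. Its value does equal $-1+\sum_{p\mid d}2^{f_p}$. But turning it into the Weierstrass degree of the $2$-adic $L$-function of the even character $\psi_d$ needs three things you do not supply: the $2$-adic main conjecture, a reflection argument through $\BQ(\sqrt d,\sqrt{-1})$, and exact control of the extra factor $2$ described above.

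The idea you are missing is what makes your ``direct computation via the Euler factors'' rigorous, namely the congruence
\[L^{(D)}(\chi_n,1-m)+L(\chi_n\psi_d,1-m)\equiv 0\ ({\rm mod}\ 2(1+\zeta_4)),\qquad D=2^{n+2}d.\]
It is proved from explicit character-sum identities modulo $4$ (Lemmas~\ref{sec3-3-2} and~\ref{sec3-3-3}). Two further facts are needed:
\begin{itemize}
\item ${\rm ord}_2L(\chi_n,1-m)=1-2^{1-n}$, which follows from $L(\chi_n,1-m)\equiv L(\chi_n,-1)\equiv\prod_{k=2}^{n}(1-\zeta_{2^k})$ modulo $2$;
\item ${\rm ord}_2(1-\chi_n(p)p^{m-1})=2^{1-n+f_p}$ for $n>f_p+1$.
\end{itemize}
Together these show that the imprimitive value $L^{(D)}(\chi_n,1-m)$ has valuation $1+2^{1-n}(-1+\sum_{p\mid d}2^{f_p})$. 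Once this is below $3/2$, the congruence transfers it to $L(\chi_n\psi_d,1-m)$.

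Summing over levels gives Theorem~\ref{zeta-2}(2), which you cite but misread as covering only small conductors. It actually gives ${\rm ord}_2\zeta_{K_n}(1-m)=2\cdot2^n+(-2+\sum_{p\mid d}2^{f_p})n+\nu'_{m,d}$ for all $n\geq n_d$. With that in hand the theorem is immediate: add ${\rm ord}_2 w_m(K_n)=n+2+{\rm ord}_2(m)$, and subtract $2^{n+1}$ when $4\mid m$.
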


It is important to note that the even $K$-group case differs significantly from the ideal class group case due to the positivity of the $\mu$-invariant. 

\medskip

Let $m=2$. We now consider several  results regarding the structure of tame kernels in $K_{cyc}/K$.
The positivity of the $\mu$-invariant in the case of tame kernels arises from the complete splitting of the archimedean primes. If we replace the tame kernel $K_2\CO_{K_n}$ with the regular kernel 
$R_2\CO_{K_n}$ using the exact sequence
\[\xymatrix{1 \ar[r] & R_2\mathcal{O}_{K_n} \ar[r] & K_2\mathcal{O}_{K_n} \ar[r] & (\mathbb{Z}/2\mathbb{Z})^{[K_n:\mathbb{Q}]} \ar[r] & 1},\]
as stated in \cite[Exact sequence (1.3)]{Gras2}, we see that the  $2$-primary parts of the regular kernels have a vanishing $\mu$-invariant.

By a theorem of Browkin \cite[Theorem 3]{Browkin}, we can determine the structure of \(K_2\mathcal{O}_{K_n}(2)\) in the following corollary.

\begin{cor}\label{cor1-1}
For any nonnegative integer $n$, we have: 

(1) If \(K=\mathbb{Q}\) is the rational number field, then
\[K_2\mathcal{O}_{K_n}(2)\simeq (\mathbb{Z}/2\mathbb{Z})^{2^{n}}.\]

(2) If \(F=\mathbb{Q}(\sqrt{p})\) or \(\mathbb{Q}(\sqrt{2p})\) where \(p\equiv \pm 3\ (mod\ 8)\) is a prime, then
\[K_2\mathcal{O}_{F_n}(2)\simeq (\mathbb{Z}/2\mathbb{Z})^{2^{n+1}}\]
In particular, $R_2\CO_{K_n}(2)$ (resp. $R_2\CO_{F_n}(2)$) is trivial in these cases.
\end{cor}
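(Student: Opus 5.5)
The plan is to split the problem into a lower bound on the $2$-rank and an exact count of the order. The $2$-rank comes from Browkin's theorem \cite[Theorem 3]{Browkin}. The order comes from the Birch–Tate formula, made explicit through the $2$-adic valuation of $\zeta_{F_n}(-1)$ computed in Theorem \ref{zeta-2} and Theorem \ref{main-thm1}. If the two numbers coincide, the group is elementary abelian of the stated rank.

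For the rank, Browkin's formula (Tate's $2$-rank formula) expresses $r_2(K_2\CO_{F})$ for a totally real field $F$ as
\[ r_1(F)+g_2(F)-1+r_2\bigl(Cl(\CO_F[1/2])\bigr), \]
where $g_2$ is the number of dyadic primes. Every field in the statement is totally real, so this gives at least $[F_n:\BQ]$. Hence $r_2\geq 2^n$ for $\BQ_n$ and $r_2\geq 2^{n+1}$ for $F_n$.

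For the order, the first step is the case $K=\BQ$. Since $\BQ_n=\BQ(\zeta_{2^{n+2}})^+$ is the maximal real subfield, Birch–Tate gives $|K_2\CO_{\BQ_n}|=w_2(\BQ_n)\,|\zeta_{\BQ_n}(-1)|$. Here $\ord_2 w_2(\BQ_n)=n+3$. We factor $\zeta_{\BQ_n}(-1)=\prod_\chi L(-1,\chi)$ over the even characters of $2$-power conductor and use the known valuations $\ord_2 L(-1,\chi)=\ord_2\bigl(\tfrac12B_{2,\chi}\bigr)$: for nontrivial $\chi$ of conductor $2^{k}$ this valuation is $1-1/2^{k-3}$-type, i.e.\ a sum of $\ord_2(1-\zeta)$ pieces. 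Summing these should give exactly $2^n$. Equivalently, Theorem \ref{main-thm1} with $d=1$ has $\mu=2$ and $\lambda$ and $\nu$ arranged so that the answer is $2^n$; this case is classical and can also be checked against Kida-type computations.

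For $F=\BQ(\sqrt p)$ with $p\equiv\pm3\mod 8$, we have $f_p=\ord_2\bigl((p^*-1)/4\bigr)=0$, because $p^*\equiv 5\mod 8$. So $\lambda=-1+1=0$. We then compute $\nu$ from Theorem \ref{zeta-2}, checking that $\ord_2|K_2\CO_{F_n}(2)|=2\cdot 2^n=2^{n+1}$ exactly. For $\BQ(\sqrt{2p})$ we use $\BQ(\sqrt{2p})_n\subset \BQ(\sqrt p)_{n+1}$ and, more precisely, $\BQ(\sqrt{2p})_n\cdot\BQ_1=\BQ(\sqrt p)_n\BQ_1$. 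This lets us write $\zeta_{\BQ(\sqrt{2p})_n}(-1)$ as $\zeta_{\BQ_n}(-1)$ times the $L$-values $L(-1,\chi\chi_{p^*}\psi)$, where $\psi$ runs over the characters with $\chi_8$-twist. The twisted characters still have $f=0$ behaviour, and the same valuation count applies.

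Main obstacle. The key step is checking that the asymptotic formula holds for all $n\geq 0$, not just for $n\geq n_K$. Concretely, this means verifying that each $\ord_2 L(-1,\chi_{p^*}\chi)$ attains its generic value already at small conductor, so that the bound on $n_K$ given later in the paper is $0$ here. I would handle this by computing $n=0$ directly, where $\ord_2\zeta_F(-1)$ is known for $p\equiv\pm3\mod 8$. Once the order and the rank lower bound agree, the group is elementary abelian of that rank. The statement about $R_2$ then follows from the exact sequence above, since the surjection onto $(\BZ/2\BZ)^{[F_n:\BQ]}$ has a source of the same order.
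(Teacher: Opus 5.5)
Your architecture matches the paper's: the rank is at least $[F_n:\BQ]$ by Browkin's formula, the exact order comes from Birch--Tate with $\ord_2 w_2(F_n)=n+3$, and equality forces an elementary abelian group. Part (1) is fine as sketched; only note that for $F=\BQ$ one has $\mu=1$, not $2$.

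The gap is at the bottom of the tower in part (2). The paper's valuation result (Theorem \ref{sec3-3-1}, with $n_d=2$ here because $f_p=0$) gives $\ord_2L(\chi_n\psi_p,-1)=1$ only for $n\ge 2$. Its proof genuinely needs $n\ge2$, through Lemmas \ref{sec3-3-2}, \ref{sec3-3-3} and the condition $n>f_p+1$. Yet for every $n\ge1$
\[
\ord_2\zeta_{\BQ(\sqrt p)_n}(-1)=(2^n-n-3)+\ord_2L(\psi_p,-1)+\sum_{\ell=1}^{n}\sum_{\chi_\ell}\ord_2L(\chi_\ell\psi_p,-1),
\]
and the $\ell=1$ term is $L(\chi_1\psi_p,-1)=L(\psi_{2p},-1)$, the $L$-value of $\BQ(\sqrt{2p})$ (conductor $8p$).

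Your claim that the $\chi_8$-twisted characters ``still have $f=0$ behaviour, and the same valuation count applies'' is false for precisely this character. ``Computing $n=0$ directly'' for $\BQ(\sqrt p)$ yields only $\ord_2L(\psi_p,-1)=1$. Without $\ord_2L(\psi_{2p},-1)=1$ you get neither the order $2^{2^{n+1}}$ at $n=1$ nor the constant in Theorem \ref{zeta-2}, whose $\nu'_{2,p}$ explicitly contains $\ord_2L(\chi_1\psi_p,-1)$. Your assertion that $\ord_2\zeta_F(-1)$ ``is known'' is also left without a source.

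The paper closes this with an external input. Browkin--Schinzel \cite{BS} give $K_2\CO_{\BQ(\sqrt{2p})}(2)\simeq(\BZ/2\BZ)^2$. Then Birch--Tate, with $\ord_2w_2(\BQ(\sqrt{2p}))=3$ and $\ord_2\zeta(-1)=-2$, yields $\ord_2L(\chi_1\psi_p,-1)=1$. The value $\ord_2L(\psi_p,-1)=1$ comes from \cite{DL}. The displayed sum is then $(2^n-n-3)+1+(2^n-1)=2^{n+1}-n-3$, and adding $n+3$ gives $2^{n+1}$ for all $n\ge0$.

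Your handling of $\BQ(\sqrt{2p})$ is also more roundabout than necessary. Since $\sqrt2\in\BQ_n$ for $n\ge1$, one has $\BQ(\sqrt{2p})_n=\BQ(\sqrt p)_n$ outright. So the only new field is $\BQ(\sqrt{2p})$ itself at $n=0$, and its $K_2$ is exactly the missing datum. If your ``$n=0$ computation'' is meant to include $\BQ(\sqrt{2p})$ via \cite{BS}, you must say so and feed it into the $n=1$ layer of $\BQ(\sqrt p)$. With that in place, your rank argument and your deduction about $R_2$ go through.
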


Our second main result is the following example, in which we explicitly compute the $\lambda$, $\mu$ and $\nu$ invariants of the tame kernels for a family of real quadratic number fields whose discriminants can have arbitrarily many prime divisors. Let $d$ be a square free positive integer, and define $\psi_{d}$ the Dirichlet character associated to $\BQ(\sqrt{d})/\BQ$. Denote by $L(\psi_d,s)$ the Dirichlet $L$-series associated with 
$\psi_d$. It is well known that $L(\psi_d,-1)$ is a non-zero rational number.

\begin{exm}\label{main-thm2}
Let \(d=p_1\cdots p_r\), where the primes \(p_i\equiv 5\ (mod\ 8)\) are distinct and satisfy \((\frac{p_i}{p_j})=-1\) for \(i\neq j\), and \(r\) is odd. Let \(K=\mathbb{Q}(\sqrt{d})\). Then
\[ord_2(|K_2\mathcal{O}_{K_n}(2)|)=2\cdot 2^n+(r-1)\cdot n+({\rm ord}_2(L(\psi_{2d},-1))-1) \qquad \forall n\ge 1.\]
In particular, $\mu=2$, $\lambda=r-1$, and $\nu={\rm ord}_2(L(\psi_{2d},-1))-1$. 
\end{exm}

\bigskip

\subsection{Outline of the Proof of the Main Theorem}

Let $\BQ_{cyc}$ be the cyclotomic $\BZ_2$-extension of $\BQ$, and let $\BQ_n$ denote the unique intermediate field in $\BQ_{cyc}$ such that $[\BQ_n:\BQ]=2^n$. Denote by $\chi_n$ a primitive character of ${\rm Gal}(\BQ_n/\BQ)$. We can view $\chi_n$ as a Dirichlet character modulo $2^{n+2}$ for all $n\geq1$. The key ingredient in proving our main results is the study of the $2$-adic divisibility of the $L$-values $L(\psi_d\chi_n,1-m)$ as $n$ varies. Here $d$ is a square free positive odd integer, $\psi_d$ is the Dirichlet character associated to $\BQ(\sqrt{d})$, and $m$ is a positive even integer. The result is summarized in the following proposition.

\begin{prop}\label{2-div-psi-d}
Let $m$ be a positive even integer, and let $n$ be a positive integer. Assume that $d\geq1$ is a square-free odd integer.
Then there exists a positive integer $n_d$ such that for all $n\geq n_d$, we have
\[{\rm ord}_2(L(\chi_n\psi_d,1-m))=1+2^{1-n}\cdot\left(-1+\sum_{p\mid d}2^{f_p}\right),\]
where each $f_p$ is the constant defined in Theorem \ref{main-thm1}.
\end{prop}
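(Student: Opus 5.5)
The plan is to pass from the complex $L$-values to a single $2$-adic power series and then read off the valuation from its Weierstrass invariants. Since $\chi_n$ has conductor $2^{n+2}$, the character $\chi_n\psi_d$ has even conductor, so the Euler factor at $2$ is trivial. Hence $L(\chi_n\psi_d,1-m)=-B_{m,\chi_n\psi_d}/m$ coincides with the value at $s=1-m$ of the Kubota–Leopoldt function $L_2(s,\chi_n\psi_d\omega^{m})$.

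By Iwasawa's construction there is, for each fixed $d$ and $m$, a power series $G_d(T)\in\BZ_2[[T]]$, divided by $T$ when $d=1$ because of the pole of the $2$-adic zeta function. Fix a topological generator $\kappa=5$ of $1+4\BZ_2$. Then
\[L(\chi_n\psi_d,1-m)=G_d\bigl(\zeta_{2^n}\kappa^{m}-1\bigr)\]
up to a $2$-adic unit, with $\zeta_{2^n}=\chi_n(5)^{\pm1}$ a primitive $2^n$-th root of unity. If Weierstrass preparation gives $G_d=2^{\mu_d}P_d(T)U_d(T)$ with $P_d$ distinguished of degree $\lambda_d$, then for $n$ large we have ${\rm ord}_2(\zeta_{2^n}\kappa^m-1)={\rm ord}_2(\zeta_{2^n}-1)=2^{1-n}$. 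Once $2^{1-n}$ is smaller than the valuations of all roots of $P_d$, this yields
\[{\rm ord}_2 L(\chi_n\psi_d,1-m)=\mu_d+\lambda_d\,2^{1-n}.\]
The threshold $n_d$ is exactly the point at which this happens. So the statement reduces to proving $\mu_d=1$ and $\lambda_d=-1+\sum_{p\mid d}2^{f_p}$, where $\lambda_1=-1$ is understood to include the pole.

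First I treat $d=1$ directly. Here the value is $-B_{m,\chi_n}/m$, and I would compute its valuation from $B_{m,\chi}=f^{m-1}\sum_a\chi(a)B_m(a/f)$ together with von Staudt–Clausen type congruences, or cite Iwasawa's and Kida's computation for $\BQ$. This should give ${\rm ord}_2=1-2^{1-n}$, that is, $\mu_1=1$ and $\lambda_1=-1$.

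For general $d$ I use a congruence. Since $\psi_d$ takes values $\pm1\equiv1\bmod 2$, the defining Stickelberger/Bernoulli measures for $\psi_d$ and for the trivial character with the primes $p\mid d$ removed agree modulo $2$. After dividing out the common factor $2$ (so one in fact needs the congruence modulo $4$ of the unnormalized series), this gives
\[\tfrac12 G_d(T)\equiv \tfrac12\, T^{-1}\!\cdot T G_1(T)\prod_{p\mid d}\bigl(1-p^{m-1}(1+T)^{a_p}\bigr)\pmod 2,\]
where $p^{*}$ or $p$ is written as $\pm\kappa^{a_p}$. Comparing with $\kappa-1=4$ gives ${\rm ord}_2(a_p)={\rm ord}_2\bigl((p^*-1)/4\bigr)=f_p$. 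Modulo $2$, the Euler factor becomes $1-(1+T)^{a_p}$ (as $p^{m-1}$ is odd), which has $\lambda$-invariant $2^{{\rm ord}_2 a_p}=2^{f_p}$ and $\mu=0$. Both invariants are additive under products, and $\mu=0$ can be read off modulo $2$. So this yields $\mu_d=1$ and $\lambda_d=-1+\sum_{p\mid d}2^{f_p}$.

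The main obstacle is making this congruence rigorous at the right $2$-adic precision. Because $\mu=1$, a mod $2$ congruence of the raw series is insufficient: I need a mod $4$ congruence, or equivalently a mod $2$ congruence of $G_d/2$. This requires care with the factor $1/m$, with $\kappa^m$, and with the sign in $p^*$, which is what makes $f_p$ depend on $\bigl(\frac{-1}{p}\bigr)$. I would first try to obtain the congruence via the Kida-style Riemann–Hurwitz argument for the minus/plus parts, or directly from Sinnott's measure-theoretic description of $G_d$. In that description $\psi_d(a)=1-2\cdot\mathbf 1_{\psi_d(a)=-1}$, so the correction term carries an explicit factor $2$ whose contribution modulo $4$ can be controlled.
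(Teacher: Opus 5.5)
Your route differs from the paper's: you use one Iwasawa series plus Weierstrass preparation, while the paper proves explicit congruences at each level $n$. The reduction itself is fine. It suffices to prove $\mu(G_d)=1$ and $\lambda(G_d)=-1+\sum_{p\mid d}2^{f_p}$, and your Euler-factor bookkeeping is correct: ${\rm ord}_2(a_p)=f_p$, and $1-(1+T)^{a_p}\equiv T^{2^{f_p}}\cdot\text{unit}$ modulo $2$. Two small points, both harmless modulo $2$. For your evaluation point the factor should be $1-p^{-1}(1+T)^{a_p}$. It does not vanish at $T=0$, so the pole of $G_1$ survives and your congruence must be read in $\BZ_2[[T]][T^{-1}]$.

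\textbf{The gap.} Neither of the two facts you reduce to is proved. The decisive one, your displayed congruence (equivalently $G_d\equiv G_1\prod_{p\mid d}(\cdots)$ modulo $4$), is only named as ``the main obstacle''.

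The one justification you give, $\psi_d\equiv 1\pmod 2$, gives agreement modulo $2$. That is empty here: both sides are $\equiv 0\pmod 2$ precisely because $\mu=1$. Writing $\psi_d-1=-2\cdot\mathbf 1_{\psi_d=-1}$ supplies one factor of $2$. Everything depends on a second factor, and you do not say where it comes from. The issues you list ($1/m$, $\kappa^m$, the sign of $p^*$) are invisible modulo $2$ and are not the difficulty. The base case $\mu(G_1)=1$, $\lambda=-1$ is likewise deferred to an unperformed computation or a citation.

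\textbf{How the paper gets the extra precision.} It works at each fixed level $n\ge 2$. Lemma \ref{sec3-3-2} reduces $L(\eta,1-m)$ modulo $4$ to $\sum_{a\le D/4}\eta(a)a$, using von Staudt--Clausen together with $\eta(D-a)=\eta(a)$ and $\eta(D/2-a)=-\eta(a)$. Lemma \ref{sec3-3-3} then folds once more, using $\eta(D/4-a)=\pm\zeta_4\left(\frac{-1}{a}\right)\eta(a)$, to pull out the factor $1\mp\zeta_4$. Combined with $\psi_d(a)\bigl(1+\psi_d(a)\bigr)\in\{0,2\}$, this gives $L^{(D)}(\chi_n,1-m)+L(\chi_n\psi_d,1-m)\equiv 0 \pmod{2(1+\zeta_4)}$. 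That suffices once the target valuation is below $3/2$, and it yields an explicit $n_d$. Applied for all $n\ge 2$, it would also give your mod-$4$ congruence of series via Weierstrass preparation.

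\textbf{How to get it within your measure language.} The second factor of $2$ comes from parity.
\begin{enumerate}
\item The regularized Bernoulli measure $E_{1,c}$ is $\BZ_2$-valued and odd. The function $\chi_n(x)\mathbf 1_{\psi_d=-1}(x)\,x^{m-1}$ is also odd, since $m-1$ is odd and the characters are even. Hence its integral over the units is twice an integral over $\{x\equiv 1 \bmod 4\}$.
\item Take $c\equiv 5\pmod 8$ with $\psi_d(c)=1$. Then the regularizing factor $1-\chi_n(c)c^m$, of valuation $2^{1-n}$, is common to $\chi_n\psi_d$ and to $\chi_n$ with the primes of $d$ removed. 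This gives $L(\chi_n\psi_d,1-m)\equiv L^{(D)}(\chi_n,1-m)$ modulo $2^{2-2^{1-n}}$.
\item For $d=1$, the same parity argument together with ${\rm ord}_2\bigl((1-c^m)(1-2^{m-1})B_m/m\bigr)=1$ shows that the regularized trivial series is $2$ times a unit.
\end{enumerate}
One of these arguments has to be supplied before the proposal is a proof.
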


We remark that when $d=1$, the equality in the proposition takes the form
\[{\rm ord}_2(L(\chi_n,1-m))=1-2^{1-n},\]
and in this case, we can show that $n_d=1$.

Our method to prove the Proposition \ref{2-div-psi-d} can be divided into the following two steps:
\begin{itemize}
  \item Step $1$: The case $d=1$. 
 In this case, we only know a priori that $L(\chi_n,1-m)$ is a nonzero algebraic number. 
 By applying the von Staudt--Clausen theorem, we can show that $L(\chi_n,1-m)$ is $2$-adically integral. This leads to a key observation:
 \begin{equation}\label{sec1-f2}
   L(\chi_n,1-m)\equiv L(\chi_n,-1)\, ({\rm mod}\,  2).
 \end{equation}
Recall that $\chi_n$ has conductor $2^{n+2}$, we then prove the congruence
\begin{equation}\label{sec1-f1}
  L(\chi_n,-1)\equiv \prod^{n}_{k=2}(1-\zeta_{2^k})\,  ({\rm mod}\,  2)\quad \textrm{for $n\geq2$,}
\end{equation}
where $\zeta_{2^k}$ is a primitive $2^k$-th root of unity satisfying $\zeta_{2^{k}}^2=\zeta_{2^{k-1}}$. Therefore, the proposition follows in the case $d=1$. Note that for any two algebraic numbers in $\BQ(\zeta_{2^n})$, the above congruences mean that their difference belongs to  $2(\BZ_2[\zeta_{2^n}]\cap \BQ(\zeta_{2^n}))$.
  \item Step $2$: The general case $d>1$.
  Let $D=2^{n+2}d$. Denote by $L^{(D)}(\chi_n,s)$ the imprimitive $L$-function asscoaited to $\chi_n$, obtained by omitting Euler factors at all primes $\ell | D$. Using a delicate special value formula modulo $4$, we derive the key congruence
      \[L^{(D)}(\chi_n,1-m)+L(\chi_n\psi_d,1-m)\equiv 0\,  ({\rm mod}\,  2(1+\zeta_4)),\]
      where $\zeta_4$ is a primitive $4$-th root of unity.
      This congruence allows us to reduce the general case to the case  $d=1$. Here the congruence also means that the sum of the two $L$-values belongs to $2(1+\zeta_4)(\BZ_2[\zeta_{2^n}]\cap \BQ(\zeta_{2^n}))$.
\end{itemize}

\bigskip

For a number field $\CF$, let $\zeta_\CF(s)$ denote the Dedekind zeta function associated to $\CF$. Proposition \ref{2-div-psi-d} implies the following corollary.

\begin{cor}\label{zeta-value-2}
Let $m$ be a positive even integer.
\begin{enumerate}
  \item [(1)]For all non-negative integer $n$, we have
  \[{\rm ord}_2(\zeta_{\BQ_n}(1-m))=2^n-n-(2+{\rm ord}_2(m)).\] 
  \item [(2)]Let $K=\BQ(\sqrt{d})$. For all integers $n\geq n_d$, we have
  \[{\rm ord}_2(\zeta_{K_n}(1-m))=2\cdot 2^n+n\cdot \left(\sum_{p\mid d}2^{f_p}-2\right)+\nu'_{m,d},\]
  where $\nu'_{m,d}$ is a constant depending only on $m$ and $d$. One can refer to Theorem \ref{zeta-2} for the precise definition.
\end{enumerate}

\end{cor}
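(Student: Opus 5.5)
The plan is to factor both Dedekind zeta functions into Dirichlet $L$-functions and then sum the $2$-adic valuations supplied by Proposition \ref{2-div-psi-d}.

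\emph{Part (1).} Since $\BQ_n/\BQ$ is abelian, we have
\[\zeta_{\BQ_n}(s)=\prod_{\chi}L(\chi,s),\]
where $\chi$ runs over the characters of ${\rm Gal}(\BQ_n/\BQ)\simeq\BZ/2^n\BZ$. These characters are the trivial one, together with, for each $1\le k\le n$, the $2^{k-1}$ Galois conjugates $\chi_k^\sigma$ of a primitive character $\chi_k$ of exact order $2^k$. Each $\chi_k^\sigma$ is primitive of conductor $2^{k+2}$.

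The value $L(\chi_k^\sigma,1-m)$ equals $\sigma\bigl(L(\chi_k,1-m)\bigr)$, because special values at negative integers are given by generalized Bernoulli numbers and these are Galois equivariant. Since $2$ is totally ramified in $\BQ(\zeta_{2^{k+2}})$, the normalized valuation ${\rm ord}_2$ is invariant under $\sigma$. Hence all conjugates have the same valuation.

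By the case $d=1$ of Proposition \ref{2-div-psi-d}, with $n_1=1$ as remarked after it, ${\rm ord}_2 L(\chi_k,1-m)=1-2^{1-k}$ for every $k\ge1$. For the trivial character we use $\zeta(1-m)=-B_m/m$. Von Staudt--Clausen gives ${\rm ord}_2(B_m)=-1$, so ${\rm ord}_2\zeta(1-m)=-1-{\rm ord}_2(m)$. Summing,
\[\sum_{k=1}^n 2^{k-1}\bigl(1-2^{1-k}\bigr)=(2^n-1)-n,\]
and adding the trivial-character term yields $2^n-n-2-{\rm ord}_2(m)$. This holds for all $n\ge0$, and the case $n=0$ is just the trivial character.

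\emph{Part (2).} Since $d$ is odd and square-free, $K\cap\BQ_{cyc}=\BQ$, so $K_n=K\BQ_n$ and ${\rm Gal}(K_n/\BQ)\simeq{\rm Gal}(K/\BQ)\times{\rm Gal}(\BQ_n/\BQ)$. Therefore
\[\zeta_{K_n}(s)=\zeta_{\BQ_n}(s)\prod_{k=0}^{n}\prod_{\sigma}L(\psi_d\chi_k^\sigma,s),\]
where $\chi_0$ is trivial. The character $\psi_d\chi_k^\sigma$ is primitive, since the conductors $d$ and $2^{k+2}$ are coprime. As in Part (1), all conjugates share the same ${\rm ord}_2$.

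Set $S=-1+\sum_{p\mid d}2^{f_p}$. For $k\ge n_d$, Proposition \ref{2-div-psi-d} gives a contribution of $2^{k-1}\bigl(1+2^{1-k}S\bigr)=2^{k-1}+S$. Summing over $n_d\le k\le n$ gives
\[2^n-2^{n_d-1}+(n-n_d+1)S.\]
The finitely many terms with $k<n_d$, including $L(\psi_d,1-m)$, contribute a constant depending only on $m$ and $d$. Adding Part (1) gives
\[{\rm ord}_2\zeta_{K_n}(1-m)=2\cdot2^n+n(S-1)+\nu'_{m,d},\]
and $S-1=\sum_{p\mid d}2^{f_p}-2$, as claimed. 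The constant $\nu'_{m,d}$ collects $-2-{\rm ord}_2(m)-2^{n_d-1}+(1-n_d)S$ together with the valuations of the low-level $L$-values. These values are nonzero, because $\psi_d\chi_k$ is even, so the constant is finite.

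There is no real obstacle here, since the whole analytic difficulty is in Proposition \ref{2-div-psi-d}. The only points needing care are the Galois invariance of ${\rm ord}_2$ on conjugate $L$-values and the correct enumeration of characters, in particular that $\chi_1$ has conductor $8$ and so falls under the formula with $k=1$.
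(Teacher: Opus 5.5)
Your proposal is correct and follows the paper's proof of Theorem \ref{zeta-2}. You factor both zeta functions into Dirichlet $L$-functions, use ${\rm ord}_2\zeta(1-m)=-1-{\rm ord}_2(m)$ together with Proposition \ref{sec3-5} and Theorem \ref{sec3-3-1}, and sum; your constant coincides with the paper's $\nu'_{m,d}$ once the $k=n_d$ term in the paper's formula is evaluated.

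One small correction: for $d\equiv 3\ ({\rm mod}\ 4)$ the conductor of $\psi_d$ is $4d$, not $d$, so coprimality of conductors does not apply. Primitivity of $\psi_d\chi_k$ for $k\ge1$ still holds, because its $2$-component agrees with $\chi_k$ on $1+4\BZ_2$ and therefore still has conductor $2^{k+2}$.
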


For a totally real abelian number field $\CF$, since the Iwasawa main conjecture is known to hold (see \cite{Wiles}), the Quillen-Lichtenbaum conjecture follows. In particular, we have
\[|K_{2m-2}(\CO_\CF)|=\begin{cases}w_m(\CF)\cdot\left|\zeta_\CF(1-m)\right|\quad &\textrm{if ${\rm ord}_2(m)=1$}\\ 2^{-[\CF:\BQ]}\cdot w_m(\CF)\cdot \left|\zeta_\CF(1-m)\right|\quad &\textrm{if ${\rm ord}_2(m)\geq2$.}
\end{cases}\]
Here, $w_m(\CF)$ denotes the largest integer $\fN$ such that ${\rm Gal}(\CF(\zeta_\fN)/\CF)$ has exponent dividing $m$ (\cite{RW}).  Since the number $w_m(\CF)$ can be determined explicitly, Theorem \ref{main-thm1} follows directly from Corollary \ref{zeta-value-2}. To prove (2) in Corollary \ref{cor1-1} and Example \ref{main-thm2}, we need to show that $n_K=2$ for those fields $K=\BQ(\sqrt{d})$. This involves an induction method of $L$-values, which we borrow some ideas from \cite{DL}.

\medskip

In the final part of the introduction, we outline the structure of the paper. In Section \S\ref{sec2}, we fix the notation used throughout the paper and provide some preliminaries on character sum relations, which play a crucial role in the proof of equality \eqref{sec1-f1}. In Section \S\ref{sec3}, we present the detailed proofs of Proposition \ref{2-div-psi-d} and Corollary \ref{zeta-value-2}. In Section \S\ref{sec4}, we determine the structure of the $K$-groups, and prove Theorem \ref{main-thm1} and Example \ref{main-thm2}. In the final Section \S\ref{sec5}, we investigate the properties and behavior of the integer $n_d$ using the induction method on $L$-values. 

\medskip

\textbf{Acknowledgement}: The authors would like to thank the reviewer for the careful reading of the manuscript and for the excellent suggestions, which have improved both the mathematical content and the exposition of the paper.

\section{Notation and preliminaries}\label{sec2}

Let us fix the following notation.
\begin{itemize}
  \item For a field $\fF$, we denote by $\overline{\fF}$ an algebraic closure of $\fF$. Let $\BQ_2$ denote the field of $2$-adic numbers, and let $\BZ_2$ denote the $2$-adic integer in $\BQ_2$. Let $n$ be a nonnegative integer. For $\fA,\fB\in\BQ(\zeta_{2^n})$ and a positive rational number $\fd$, we write $\fA\equiv \fB \, ({\rm mod}\, 2^\fd)$ to mean that $\fA-\fB\in 2^{\fd}(\BZ_2[\zeta_{2^n}]\cap \BQ(\zeta_{2^n}))$.  
  \item For a number field $\CF$, let $\CO_{\CF}$ denote the ring of integers of $\CF$. Let $Cl(\CF)$ denote the ideal class group of $\CF$. For each positive integer $m$, we denote by $K_{2m-2}\CO_\CF$ the $(2m-2)$-th $K$-group of $\CO_\CF$; see \cite{Quillen0} for the definition. In the case $m=2$, the group $K_2\CO_\CF$ is known as the tame kernel of $\CF$. Let $\CF_{cyc}$ denote the cyclotomic $\BZ_2$-extension of $\CF$. For each non-negative integer $n$, we denote by $\CF_n$ the unique intermediate field in $\CF_{cyc}$ such that $[\CF_n:\CF]=2^n$.
  \item Let $\chi_n$ denote a primitive character of ${\rm Gal}(\BQ_n/\BQ)$. We can view $\chi_n$ as a Dirichlet character modulo $2^{n+2}$ for all $n\geq1$. Let $d$ be an odd square free integer, and define $\psi_d$ to be the Dirichlet character corresponding to the quadratic extension $\BQ(\sqrt{d})$. Then $\psi_d$ has conductor dividing $4d$. For a Dirichlet character $\rho$, we denote by $L(\rho,s)$ the associated Dirichlet $L$-function.  For any integer $D$, we define the imprimitive Dirichlet $L$-function by
      \[L^{(D)}(\rho,s)=\prod_{p\mid D}(1-\rho(p)p^{-s})\cdot L(\rho,s).\]
      For a number field $\CF$, we denote by $\zeta_\CF(s)$ the Dedekind zeta function of $\CF$.
  \item The Bernoulli numbers $B_n$ and the Bernoulli polynomials $B_n(X)$ are defined via the generating functions: 
\[\frac{t}{e^t-1}=\sum\limits_{n=0}^\infty B_n\frac{t^n}{n!} \qquad \frac{te^{Xt}}{e^t-1}=\sum\limits_{n=0}^\infty B_n(X)\frac{t^n}{n!}.\]
The Bernoulli polynomials satisfy the identity:
\begin{equation}\label{sec201}
  B_n(X)=\sum\limits_{k=0}^n \binom{n}{k} B_k X^{n-k}.
\end{equation}
For a Dirichlet character $\chi$ with conductor $D$, the generalized Bernoulli numbers $B_{n,\chi}$ are defined by: 
\[\sum_{a=1}^D \frac{\chi(a)te^{at}}{e^{Dt}-1}=\sum\limits_{n=0}^\infty B_{n,\chi}\frac{t^n}{n!}.\]
If $D_0$ is any multiple of $D$, from \cite[Proposition 4.1]{Washington} we have 
\begin{equation}\label{sec202}
  B_{n,\chi}=D_0^{n-1}\sum\limits_{a=1}^{D_0} \chi(a)B_n\left(\frac{a}{D_0}\right).
\end{equation}
\end{itemize}

Next, we present two special value formulas for Dirichlet  $L$-functions. These formulas establish a connection between generalized Bernoulli numbers and Dirichlet $L$-values at non-positive integers. 

\begin{lem}[\cite{DL}, Lemma 4; \cite{Washington}, Theorem 4.2]\label{bernoulli}
Let $\chi$ be a primitive Dirichlet character and $n\ge 1$. Then 
\[L(\chi,1-n)=-\frac{B_{n,\chi}}{n}\]
In particular, if $\chi$ is a non-trivial even character modulo $D_1$, then for any positive integer $k$, we have
\[L(\chi,-1)=-\frac{1}{2kD_1}\sum\limits_{a=1}^{kD_1} \chi(a)a^2.\]
\end{lem}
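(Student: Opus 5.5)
The first formula is the standard identity $L(\chi,1-n)=-B_{n,\chi}/n$, which is cited from the literature. I will only indicate the route. For $\mathrm{Re}(s)>1$, split $L(\chi,s)=\sum_{a=1}^{D}\chi(a)\sum_{k\ge0}(a+kD)^{-s}=D^{-s}\sum_{a=1}^{D}\chi(a)\zeta(s,a/D)$ in terms of Hurwitz zeta functions. Then use the analytic continuation $\zeta(1-n,x)=-B_n(x)/n$, obtained from the Mellin transform of $te^{xt}/(e^t-1)$. This gives $L(\chi,1-n)=-\frac{D^{n-1}}{n}\sum_{a}\chi(a)B_n(a/D)$, and by \eqref{sec202} with $D_0=D$ this equals $-B_{n,\chi}/n$.

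For the second assertion, take $n=2$, so that $L(\chi,-1)=-B_{2,\chi}/2$. Apply \eqref{sec202} with $D_0=kD_1$, which is allowed because $kD_1$ is a multiple of the conductor $D_1$. This gives
\[B_{2,\chi}=kD_1\sum_{a=1}^{kD_1}\chi(a)B_2\!\left(\tfrac{a}{kD_1}\right).\]
By \eqref{sec201}, $B_2(X)=X^2-X+\tfrac16$. Substituting and writing $N=kD_1$, we get
\[B_{2,\chi}=\frac1N\sum_{a=1}^{N}\chi(a)a^2-\sum_{a=1}^{N}\chi(a)a+\frac N6\sum_{a=1}^N\chi(a).\]

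It remains to see that the last two sums vanish. The sum $\sum_{a=1}^N\chi(a)$ is $0$ because $\chi$ is nontrivial and the range covers $k$ full periods. For the linear sum, pair $a$ with $N-a$ and use evenness, $\chi(N-a)=\chi(-a)=\chi(a)$. Then
\[S=\sum_{a=1}^{N}\chi(a)a=\sum_{a=0}^{N-1}\chi(a)(N-a)=N\sum_{a=0}^{N-1}\chi(a)-S=-S.\]
The term $a=N$ in the first sum and $a=0$ in the second contribute nothing, since $\chi(N)=\chi(0)=0$ whenever $D_1>1$. Hence $S=0$. Therefore $B_{2,\chi}=\frac{1}{kD_1}\sum_{a=1}^{kD_1}\chi(a)a^2$, and dividing by $-2$ gives the stated formula.

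The only point that needs care is the vanishing of the linear sum, and it holds exactly because $\chi$ is even and nontrivial. The rest is bookkeeping.
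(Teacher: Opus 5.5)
Your proof is correct. The paper gives no argument of its own here and only cites Washington's Theorem 4.2 and Lemma 4 of the reference DL; your derivation is the standard argument behind those references. That is, the Hurwitz-zeta identity $\zeta(1-n,x)=-B_n(x)/n$ combined with \eqref{sec202}, then the expansion $B_2(X)=X^2-X+\frac16$, with the constant and linear character sums vanishing by nontriviality and evenness (your handling of the boundary terms $\chi(N)=\chi(0)=0$ in the pairing $a\leftrightarrow N-a$ is right).
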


\begin{lem}[\cite{DL}, Proposition 5]\label{imprimitive}
Let $\chi$ be a primitive Dirichlet character modulo an even integer $d$, and let $d'$ be another odd squarefree positive integer. Put $D_1=dd'$. Then
\[L^{(D_1)}(\chi,-1)=-\frac{1}{2D_1}\sum\limits_{a=1}^{D_1} \chi(a)\psi_{d'}(a)^2a^2=L(\chi\psi_{d'}^2,-1).\]
 \end{lem}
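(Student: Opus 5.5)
The plan is to prove the second equality first, by comparing Euler products, and then evaluate $L(\chi\psi_{d'}^2,-1)$ with the Hurwitz zeta function. That route needs no primitivity, which matters because $\chi\psi_{d'}^2$ is generally imprimitive. One caveat: the closed formula in the middle needs $\chi$ to be even, as in Lemma \ref{bernoulli} and in every application in this paper (characters $\chi_n\psi_d$ with $m$ even). I would make that assumption explicit, since for odd $\chi$ the value $L(\chi,-1)$ vanishes while the displayed sum need not.

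\emph{Step 1: Euler products.} Set $\rho=\chi\psi_{d'}^2$, a Dirichlet character modulo $D_1=dd'$. We have $\psi_{d'}(a)^2=1$ if $(a,d')=1$ and $0$ otherwise, so $\rho(a)=\chi(a)$ when $(a,d')=1$ and $\rho(a)=0$ otherwise. Comparing Euler products,
\[L(\rho,s)=\prod_{p\mid d'}(1-\chi(p)p^{-s})\,L(\chi,s).\]
For primes $p\mid d$ we have $\chi(p)=0$, because $\chi$ is primitive modulo $d$. So the omitted Euler factors at those primes are $1$, and the product over $p\mid d'$ equals the product over $p\mid D_1$. Hence $L(\rho,s)=L^{(D_1)}(\chi,s)$ as meromorphic functions, and in particular at $s=-1$.

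\emph{Step 2: Hurwitz zeta evaluation.} Since $\rho$ is periodic modulo $D_1$, for $\mathrm{Re}(s)>1$ we have
\[L(\rho,s)=D_1^{-s}\sum_{a=1}^{D_1}\rho(a)\,\zeta(s,a/D_1).\]
This extends to all $s$ by continuation. Using $\zeta(1-n,x)=-B_n(x)/n$, I get
\[L(\rho,-1)=-\frac{D_1}{2}\sum_{a=1}^{D_1}\rho(a)B_2(a/D_1),\qquad B_2(x)=x^2-x+\tfrac16 .\]
This is just formula \eqref{sec202} together with Lemma \ref{bernoulli}, now justified for the imprimitive $\rho$.

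\emph{Step 3: kill the lower-order terms.} Since $d$ is even, $\chi$ is nontrivial, and therefore so is $\rho$. Its restriction to $(\BZ/D_1)^\times$ is the pullback of $\chi$, which gives $\sum_a\rho(a)=0$. Because $\rho$ is even, the substitution $a\mapsto D_1-a$ gives
\[\sum_a\rho(a)a=\sum_a\rho(a)(D_1-a),\]
hence $2\sum_a\rho(a)a=D_1\sum_a\rho(a)=0$. Only the $x^2$ term survives, and we obtain
\[L(\rho,-1)=-\frac{D_1}{2}\cdot D_1^{-2}\sum_{a=1}^{D_1}\rho(a)a^2=-\frac{1}{2D_1}\sum_{a=1}^{D_1}\chi(a)\psi_{d'}(a)^2a^2.\]

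The only delicate point is Step 2: the standard Bernoulli formula is stated for primitive characters, so I would pass through the Hurwitz representation rather than through Lemma \ref{bernoulli} directly. The parity bookkeeping in Step 3 is where the evenness hypothesis is actually used.
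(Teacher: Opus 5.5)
Your proof is correct and follows the paper's route. The paper also identifies $L^{(D_1)}(\chi,s)$ with $L(\chi\psi_{d'}^2,s)$ by comparing Euler products, using $\chi(\ell)=\chi(\ell)\psi_{d'}(\ell)^2$ for odd $\ell\nmid d'$ and the fact that $\chi$ already vanishes at $2$, which also covers $d'\equiv 3 \pmod 4$, where $\psi_{d'}(a)=0$ for even $a$. It then defers the evaluation at $s=-1$ to \cite[Proposition 5]{DL}, which your Hurwitz-zeta computation supplies directly, without primitivity.

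Your evenness caveat is justified. For the odd primitive character $\chi$ modulo $4$ and $d'=1$ one has $L^{(4)}(\chi,-1)=L(\chi,-1)=0$, while $-\frac{1}{8}\sum_{a=1}^{4}\chi(a)a^2=1$. The statement omits this hypothesis, but every character the paper applies it to ($\chi_n$ and $\chi_n\psi_b$) is even.
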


\begin{proof}
For \(\text{Re}(s)>1\), we have
\[L^{(D_1)}(\chi,-1)=\prod\limits_{\ell \nmid dd'} (1-\chi(\ell)\ell^{-s})^{-1}=\prod\limits_{\ell \nmid d',\text{odd}} (1-\chi(\ell)\ell^{-s})^{-1}.\]
For an odd prime \(\ell\nmid d'\), we have \(\chi(\ell)=\chi(\ell)\psi_{d'}(\ell)^2\). The rest of the proof is identical with \cite[Proposition 5]{DL}.
\end{proof}

\bigskip

We now present several relations involving character sums, which will play an important role in the proof of the equation \eqref{sec1-f1}. Recall that $\chi_n$ is a primitive Dirichlet character modulo $2^{n+2}$.

\medskip

\begin{lem}\label{no-D}
Let $n$ be a positive integer, and let $d$ be a square free positive odd integer. Let $\psi$ be an even Dirichlet character modulo $4d$, and define
$\chi=\chi_n\psi$, a nontrivial even Dirichlet character modulo $D=2^{n+2}d$. Then 
\[-\frac{1}{2D}\sum\limits_{a=1}^{D} \chi(a)a^2=\frac{1}{2}\sum\limits_{a=1}^{D/2} \chi(a)a.\]
\end{lem}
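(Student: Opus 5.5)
The plan is to pair each $a$ with $a+D/2$, after first showing that $\chi$ changes sign under the shift $a\mapsto a+D/2$. Splitting the range $1\le a\le D$ into the two halves $a$ and $a+D/2$ with $1\le a\le D/2$ then collapses the quadratic sum into a linear one.

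\emph{Step 1: the twisting identity $\chi(a+D/2)=-\chi(a)$ for all integers $a$.}
If $a$ is even, both sides vanish, since $D$ is even and $\chi_n$ has $2$-power conductor. So let $a$ be odd.
\begin{itemize}
\item The factor $\psi$: since $n\ge 1$, we have $4d\mid 2^{n+1}d=D/2$. Hence $\psi(a+D/2)=\psi(a)$.
\item The factor $\chi_n$: since $d$ is odd, $D/2=2^{n+1}d\equiv 2^{n+1}\pmod{2^{n+2}}$. For odd $a$ this gives $a+D/2\equiv a(1+2^{n+1})\pmod{2^{n+2}}$. So it suffices to show $\chi_n(1+2^{n+1})=-1$.
\item Via class field theory, $\chi_n$ is a character of $(\BZ/2^{n+2})^\times$ whose kernel contains $\{\pm1\}$, because $\BQ_n$ is the real subfield fixed by complex conjugation inside the degree-$2^{n+1}$ extension $\BQ(\zeta_{2^{n+2}})\cap\BR$ setup. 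Primitivity means $\chi_n$ induces a faithful character of the cyclic group $(\BZ/2^{n+2})^\times/\{\pm1\}$, which has order $2^n$.
\item The class of $1+2^{n+1}$ is the unique element of order $2$ in that cyclic group. Indeed $(1+2^{n+1})^2\equiv1$, and $1+2^{n+1}\not\equiv\pm1\pmod{2^{n+2}}$ for $n\ge1$. A faithful character sends it to $-1$.
\end{itemize}
This step is the only real obstacle. The point to get right is that primitivity of $\chi_n$ forces faithfulness on the quotient by $\pm1$.

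\emph{Step 2: the half-sum $S_0=\sum_{a=1}^{D/2}\chi(a)$ vanishes.}
Since $\chi$ is even, Step 1 gives $\chi(D/2-a)=\chi(-a+D/2)=-\chi(-a)=-\chi(a)$. The terms $a=0$ and $a=D/2$ are zero because these integers are even. So the substitution $a\mapsto D/2-a$ permutes the nonzero terms of $S_0$ and shows $S_0=-S_0$, hence $S_0=0$.

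\emph{Step 3: the computation.}
Split the sum over $1\le a\le D$ into $a$ and $a+D/2$ with $1\le a\le D/2$. By Step 1,
\[\sum_{a=1}^{D}\chi(a)a^2=\sum_{a=1}^{D/2}\chi(a)\bigl(a^2-(a+D/2)^2\bigr)=-D\sum_{a=1}^{D/2}\chi(a)a-\frac{D^2}{4}S_0 .\]
By Step 2 the last term is zero. Multiplying by $-\frac{1}{2D}$ gives exactly the identity claimed in Lemma~\ref{no-D}.
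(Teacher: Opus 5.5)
Your proof is correct. It uses the same two ingredients as the paper: the sign identity $\chi_n(1+2^{n+1})=-1$ coming from primitivity, and the vanishing of the half-sum $S_0=\sum_{a=1}^{D/2}\chi(a)$. The difference is in how you organize them.

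\textbf{The paper's route.} It first pairs $a$ with $D-a$, using only evenness. After discarding the $S_0$ term this leaves $-\frac{1}{D}\sum_{a=1}^{D/2}\chi(a)a^2+\sum_{a=1}^{D/2}\chi(a)a$. It then needs a second reflection $a\mapsto \frac{D}{2}-a$, via $\chi(\frac{D}{2}-a)=-\chi(a)$, to show that the quadratic half-sum equals $-\frac{1}{2}\sum_{a=1}^{D/2}\chi(a)a$.

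\textbf{Your route.} The translation $a\mapsto a+\frac{D}{2}$ cancels the quadratic terms in one step, because $(a+\frac{D}{2})^2-a^2$ is linear in $a$. So you use the sign identity only once, and evenness only to kill $S_0$. In particular, Step 1 needs $\psi$ only to be periodic modulo $4d$, not even.

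\textbf{What you add.} You justify the step the paper dismisses with ``by the primitivity of $\chi_n$''. You show $\chi_n$ is faithful on the cyclic group $(\BZ/2^{n+2}\BZ)^\times/\{\pm1\}$ of order $2^n$, and that $1+2^{n+1}$ is its unique element of order $2$. Note that $1+2^{n+1}\equiv 1-2^{n+1} \pmod{2^{n+2}}$, so this is the same element the paper uses.

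\textbf{What the paper's form buys.} The reflection identity $\chi(\frac{D}{2}-a)=-\chi(a)$, proved along the way, is the one reused in Proposition~\ref{mod-4}. Via evenness, though, the reflection and translation forms are interchangeable.

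\textbf{Wording slip.} $\BQ(\zeta_{2^{n+2}})\cap\mathbb{R}=\BQ_n$ has degree $2^n$. The degree-$2^{n+1}$ field, in which $\BQ_n$ is the fixed field of complex conjugation, is $\BQ(\zeta_{2^{n+2}})$ itself. This does not affect the argument.
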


\begin{proof}
Since $\chi(D-a)=\chi(a)$ for all $a$, we have
\[-\frac{1}{2D}\sum\limits_{a=1}^{D} \chi(a)a^2=-\frac{1}{2D}\sum\limits_{a=1}^{D/2} \chi(a) \bigg( a^2+(D-a)^2 \bigg).\]
Note that $\chi$ is nontrivial, expanding the right-hand side, we obtain
\[-\frac{1}{2D}\sum\limits_{a=1}^{D} \chi(a)a^2=-\frac{1}{D}\sum\limits_{a=1}^{D/2} \chi(a)a^2+\sum\limits_{a=1}^{D/2} \chi(a)a.\]
We now claim that
\[-\frac{1}{D}\sum\limits_{a=1}^{D/2} \chi(a)a^2+\frac{1}{2}\sum\limits_{a=1}^{D/2} \chi(a)a=0.\]
Once this claim is established, the lemma follows immediately.
To prove the claim, note that since $\chi$ is nontrivial, we have $\sum^{\frac{D}{2}}_{a=1}\chi(a)=0$. It is easy to verify that
\[-\frac{1}{D}\sum\limits_{a=1}^{D/2} \chi(a)a^2+\frac{1}{2}\sum\limits_{a=1}^{D/2} \chi(a)a=-\frac{1}{2D}\sum^{D/2}_{a=1}
\left(\chi(a)+\chi\left(\frac{D}{2}-a\right)\right)\cdot a^2. \] 
Now we show that
\[\chi\left(\frac{D}{2}-a\right)=-\chi(a)\]
for all integers $a$. Indeed, we may assume $a$ is odd. Otherwise, both $\chi(a)$ and $\chi\left(\frac{D}{2}-a\right)$ are zero. Recall that $\chi=\psi\chi_n$, since $\psi$ is even and has conductor dividing $4d$, $\psi\left(\frac{D}{2}-a\right)=\psi(a)$. For the character $\chi_n$,
note that
\[\chi_n\left(\frac{D}{2}-a\right)=\chi_n(a)\cdot\chi_n(1-2^{n+1}).\]
By the primitivity of $\chi_n$, we have $\chi_n(1-2^{n+1})=-1$,  so
\[\chi_n\left(\frac{D}{2}-a\right)=-\chi_n(a),\]
and hence $\chi\left(\frac{D}{2}-a\right)=-\chi(a)$. This completes the proof of the claim.  
\end{proof}

To give our second relation between character sums, we need the following

\begin{lem}\label{lem2-1}
Assume $n\ge 2$. Then for any integer $b$, we have
\begin{enumerate}
  \item[(1)]$\chi_n(2^{n+1}-b)=-\chi_n(b)$.
  \item[(2)]$\chi_n(2^n-b)=\chi_n(2^n-1)\cdot\left(\frac{-1}{b}\right)\cdot \chi_n(b)$.
\end{enumerate}
\end{lem}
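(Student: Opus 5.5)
Both parts reduce to the structure of $(\BZ/2^{n+2}\BZ)^\times$ and to the fact that $\chi_n$ is an even character trivial on $-1$ and of exact order $2^n$. I would recall that $(\BZ/2^{n+2}\BZ)^\times=\{\pm1\}\times\langle 5\rangle$, with $5$ of order $2^n$. Since $\chi_n$ is a primitive character of ${\rm Gal}(\BQ_n/\BQ)$, it is even, so $\chi_n(-1)=1$, and $\chi_n(5)$ is a primitive $2^n$-th root of unity. If $b$ is even, then $2^{n+1}-b$ and $2^n-b$ are even (as $n\ge2$), so both sides of (1) and (2) vanish. I may therefore assume $b$ is odd throughout.

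For (1), I would write $2^{n+1}-b\equiv -b(1-2^{n+1}b^{-1})$ modulo $2^{n+2}$. Since $b^{-1}$ is odd, $2^{n+1}b^{-1}\equiv 2^{n+1}$ modulo $2^{n+2}$, so $1-2^{n+1}b^{-1}\equiv 1+2^{n+1}\pmod{2^{n+2}}$. The element $1+2^{n+1}$ is the unique element of order $2$ in $\langle5\rangle$, namely $5^{2^{n-1}}$. Hence $\chi_n(1+2^{n+1})=\chi_n(5)^{2^{n-1}}=-1$. Together with evenness this gives $\chi_n(2^{n+1}-b)=\chi_n(-b)\cdot(-1)=-\chi_n(b)$. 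This is the same primitivity argument already used in the proof of Lemma \ref{no-D}, and it could alternatively be quoted from there.

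For (2), I would factor $2^n-b\equiv(2^n-1)\cdot u\cdot b$ with $u\equiv (2^n-b)(2^n-1)^{-1}b^{-1}$, and aim to show $\chi_n(u)=\left(\frac{-1}{b}\right)$. Expanding modulo $2^{n+2}$, $(2^n-1)^{-1}\equiv -(1+2^n+2^{2n})\equiv -(1+2^n)$ because $2n\ge n+2$ for $n\ge2$. Then
\[u\equiv -(2^nb^{-1}-1)(1+2^n)\equiv (1-2^nb^{-1})(1+2^n)\equiv 1+2^n(1-b^{-1})\pmod{2^{n+2}},\]
again using $2^{2n}\equiv0$. If $b\equiv1\pmod 4$, then $1-b^{-1}\equiv0\pmod 4$, so $u\equiv1$ and $\chi_n(u)=1=\left(\frac{-1}{b}\right)$. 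If $b\equiv3\pmod4$, then $1-b^{-1}\equiv2\pmod4$, so $u\equiv1+2^{n+1}$ and $\chi_n(u)=-1=\left(\frac{-1}{b}\right)$ by the computation in (1).

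The only delicate point is the bookkeeping of powers of $2$ modulo $2^{n+2}$, which is where $n\ge2$ is needed, together with the identification of $1+2^{n+1}$ as the element of order $2$ in $\langle 5\rangle$. Everything else is routine.
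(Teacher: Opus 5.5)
Your proof is correct and follows the paper's argument. In both, one reduces to odd $b$, writes $\chi_n(2^n-b)=\chi_n(2^n-1)\chi_n(b)\chi_n\bigl(1+2^n(1-b^{-1})\bigr)$, and evaluates the last factor according to $b \bmod 4$ (the paper phrases this with $b'$ satisfying $bb'\equiv 1$ and $b'\equiv b \bmod 4$). You also spell out what the paper leaves implicit: part (1), and the identity $\chi_n(1+2^{n+1})=-1$ via $1+2^{n+1}=5^{2^{n-1}}$ in $(\BZ/2^{n+2}\BZ)^\times$.
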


\begin{proof}
We only prove (2).
We may assume that $b$ is odd. Recall that $\chi_n$  is a primitive Dirichlet character of conductor $2^{n+2}$. Then we have
\[\frac{\chi_n(2^n-b)}{\chi_n(b)\chi_n(2^n-1)}=\chi_n(1+2^n(1-b')),\]
where $b'$ is an integer such that $bb'\equiv 1\mod 2^{n+2}$. Note that $b\equiv b'\mod 4$. The lemma follows from the equality
\[\chi_n(1+2^n(1-b'))=\left(\frac{-1}{b}\right).\]
\end{proof}

Let $\zeta_4$ denote a primitive $4$-th root of unity. 

\begin{prop}\label{mod-4}
Assume that $n\ge 2$ and $\chi_n(2^n-1)=\pm \zeta_4$. Keep the assumptions as in Lemma \ref{no-D}. Then
\[-\frac{1}{2D}\sum\limits_{a=1}^{D} \chi(a)a^2\equiv (1\mp \zeta_4)\sum\limits_{a=1}^{D/8} \chi(a)\cdot a \varepsilon_a \ ({\rm mod}\ 4),\]
where $\varepsilon_a=1$ if $a\equiv 1\ ({\rm mod}\ 4)$ and $\varepsilon_a=\pm \zeta_4$ if $a\equiv 3\ ({\rm mod}\ 4)$. 
\end{prop}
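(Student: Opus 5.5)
The plan is to start from Lemma \ref{no-D}, which turns the left-hand side into $\frac{1}{2}\sum_{a=1}^{D/2}\chi(a)a$, and then shrink the range of summation twice by symmetry. Each shrinking produces an error term that is a multiple of $D/4$ or $D/2$ times a character sum. Since $n\ge 2$, these error terms vanish modulo $4$.

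\emph{First reduction.} In the proof of Lemma \ref{no-D} we showed $\chi(D/2-a)=-\chi(a)$. Pairing $a$ with $D/2-a$ for $1\le a\le D/4$ gives
\[\frac12\sum_{a=1}^{D/2}\chi(a)a=\sum_{a=1}^{D/4}\chi(a)a-\frac{D}{4}\sum_{a=1}^{D/4}\chi(a).\]
Since $D/4=2^nd$ is divisible by $4$ and the character sum lies in $\BZ[\zeta_{2^n}]$, the last term is $\equiv 0 \pmod 4$.

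\emph{Second reduction.} Next I pair $a$ with $D/4-a$ for $1\le a\le D/8$.
\begin{itemize}
\item For $\psi$: $D/4=2^nd\equiv 0 \pmod{4d}$ and $\psi$ is even, so $\psi(D/4-a)=\psi(a)$.
\item For $\chi_n$: reduce $2^nd$ modulo $2^{n+2}$ and apply Lemma \ref{lem2-1}(2). This should give $\chi_n(D/4-a)=c\left(\frac{-1}{a}\right)\chi_n(a)$ with $c=\pm\zeta_4$ the relevant value $\chi_n(2^n-1)$.
\end{itemize}
Then
\[\sum_{a=1}^{D/4}\chi(a)a=\sum_{a=1}^{D/8}\chi(a)\Big(a+c\left(\tfrac{-1}{a}\right)\big(\tfrac D4-a\big)\Big)\equiv\sum_{a=1}^{D/8}\chi(a)\,a\Big(1-c\left(\tfrac{-1}{a}\right)\Big)\pmod 4,\]
because $\frac D4\, c\sum_a\left(\frac{-1}{a}\right)\chi(a)\in 4\BZ_2[\zeta_{2^n}]$.

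\emph{Extracting the factor.} For $a\equiv1 \pmod 4$ the factor is $1-c$. For $a\equiv 3 \pmod 4$ it is $1+c$, and the identity $1+\zeta_4=\zeta_4(1-\zeta_4)$ gives $1+c=(1-c)\cdot c$. Factoring out $1-c=1\mp\zeta_4$ leaves exactly the weights $\varepsilon_a$ of the statement.

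\emph{Main obstacle.} The delicate step is computing $\chi_n(2^nd-a)$ when $d\equiv 3 \pmod 4$. In that case $2^nd\equiv -2^n \pmod{2^{n+2}}$, so Lemma \ref{lem2-1} has to be applied to $\chi_n(2^n+a)=\chi_n(2^n-(-a))$. This may introduce an extra sign, or a factor $\chi_n(d)$, into $c$. I would first check carefully whether this sign cancels against the parity constraints: $\psi$ is even and $\left(\frac{-1}{-a}\right)=-\left(\frac{-1}{a}\right)$, and I expect these to combine so that $c=\chi_n(2^n-1)$ in all cases. If the sign does not cancel, I would split into the cases $d\equiv 1$ and $d\equiv 3 \pmod 4$ and track the resulting unit factor. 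The intended conclusion is that the congruence modulo $4$ survives with the stated sign convention.
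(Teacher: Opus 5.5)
Your route is the paper's route: apply Lemma \ref{no-D}, fold $[1,D/2]$ onto $[1,D/4]$ using $\chi(D/2-a)=-\chi(a)$, fold again with $a\leftrightarrow D/4-a$ via Lemma \ref{lem2-1}(2), and factor out $1-c$ using $1+c=c(1-c)$. All of that is fine.

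\textbf{The gap.} The problem is the step you flagged, and your expectation there is wrong: the sign does not cancel. Since $4d\mid 2^nd$ and $\psi$ is even, $\psi(D/4-a)=\psi(a)$ contributes no sign. For $d\equiv 3 \pmod 4$ we have $2^nd\equiv -2^n \pmod{2^{n+2}}$. So by evenness of $\chi_n$ and Lemma \ref{lem2-1}(2) with $b=-a$,
\[\chi_n(2^nd-a)=\chi_n(2^n+a)=\chi_n(2^n-1)\left(\frac{-1}{-a}\right)\chi_n(-a)=-\chi_n(2^n-1)\left(\frac{-1}{a}\right)\chi_n(a).\]
The parity facts you list are exactly what produce the extra sign. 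In general the constant is $c=\left(\frac{-1}{d}\right)\chi_n(2^n-1)=\chi_n(2^nd-1)$. Hence for $d\equiv 3 \pmod 4$ your argument proves the congruence with $\pm$ and $\mp$ interchanged relative to the statement.

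\textbf{The statement itself fails for $d\equiv 3 \pmod 4$.} So no case split can make the congruence ``survive with the stated sign convention''. Take $d=3$, $n=2$, $\psi$ the principal character modulo $12$, and $\chi_2(5)=\zeta_4$. Then $\chi_2(2^2-1)=\chi_2(3)=-\zeta_4$, which is the lower sign.
\begin{itemize}
\item A direct computation gives $-\frac{1}{96}\sum_{a=1}^{48}\chi(a)a^2=-6-8\zeta_4$.
\item The stated right-hand side is $(1+\zeta_4)(1+5\zeta_4)=-4+6\zeta_4$.
\item The difference is $-2-14\zeta_4$, which has ${\rm ord}_2$ equal to $3/2<2$.
\end{itemize}
With the correct constant $c=\chi_2(2^2\cdot 3-1)=\chi_2(11)=\zeta_4$ one gets $(1-\zeta_4)(1+5\zeta_4)=6+4\zeta_4$. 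This differs from the left side by $-12(1+\zeta_4)$, which is divisible by $4$, as it should be.

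\textbf{Relation to the paper.} The paper's own proof makes the same unjustified step: it reads $\chi(D/4-a)=\pm\zeta_4\left(\frac{-1}{a}\right)\chi(a)$ directly off Lemma \ref{lem2-1}(2). The paper repairs this in Lemma \ref{sec3-3-3}. There it records $\chi_n(2^nd-a)=\left(\frac{-1}{d}\right)\chi_n(2^n-a)$ and normalizes by $\chi_n(2^nd-1)=\pm\zeta_4$. Your proof is complete and correct for the statement with that normalization, and for the statement as written when $d\equiv 1 \pmod 4$.
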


\begin{proof}
From Lemma \ref{no-D} and (1) in Lemma \ref{lem2-1}, we obtain
\[-\frac{1}{2D}\sum^D_{a=1}\chi(a)a^2=\sum^{D/4}_{a=1}\chi(a)\cdot
\left(a-\frac{D}{4}\right).\]
Since $n\geq2$ so $4$ divides $D/4$. Then we get
\[-\frac{1}{2D}\sum^D_{a=1}\chi(a)a^2\equiv \sum^{D/4}_{a=1}\chi(a)a \, ({\rm mod}\, 4).\]
From (2) in Lemma \ref{lem2-1}, we have 
\begin{equation}\label{sec2-f1}
  \chi\left(\frac{D}{4}-a\right)=\pm\zeta_4\cdot\left(\frac{-1}{a}\right)\chi_n(a).
\end{equation}
Now, observe that
\[\sum^{D/4}_{a=1}\chi(a)a=\sum^{D/8}_{a=1}\chi(a)a+\sum^{D/8}_{a=1}\chi\left(\frac{D}{4}-a\right)\cdot\left(\frac{D}{4}-a\right).\]
Using \eqref{sec2-f1}, we obtain
\[\sum^{D/4}_{a=1}\chi(a)a\equiv \sum^{D/8}_{a=1}\chi(a)\cdot a\cdot \left(1\mp \zeta_4\cdot\left(\frac{-1}{a}\right)\right) \, ({\rm mod}\, 4).\]
A direct verification shows that
\[\sum^{D/8}_{a=1}\chi(a)\cdot a\cdot \left(1\mp \zeta_4\cdot\left(\frac{-1}{a}\right)\right)=(1\mp\zeta_4)\sum^{D/8}_{a=1}\chi(a)a\varepsilon_a,\]
so the proposition follows.
\end{proof}

From Lemma \ref{bernoulli} and the above proof, we obtain a corollary.

\begin{cor}\label{sec3-cor1}
For each positive integer $n$, we have
\begin{equation}\label{sec3-cor-f1}
  L(\chi_n,-1)\equiv \sum^{\frac{D}{4}}_{a=1}\chi_n(a)a\, ({\rm mod}\, 2).
\end{equation}
\end{cor}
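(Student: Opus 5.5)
Take $d=1$ and $\psi$ trivial in Lemma \ref{no-D}, so $\chi=\chi_n$ and $D=2^{n+2}$. The case $n=1$ has to be handled separately, because Lemma \ref{lem2-1} assumes $n\ge 2$.

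For $n\ge 2$, I would run the first half of the proof of Proposition \ref{mod-4} and stop once the $D/4$ sum appears. Lemma \ref{bernoulli}, applied with $k=1$ and $D_1=2^{n+2}$, gives
\[L(\chi_n,-1)=-\frac{1}{2D}\sum_{a=1}^{D}\chi_n(a)a^2.\]
Lemma \ref{no-D} rewrites this as $\frac12\sum_{a=1}^{D/2}\chi_n(a)a$. Split this sum into $a\le D/4$ and $a=D/2-b$ with $1\le b< D/4$. Lemma \ref{lem2-1}(1) gives $\chi_n(D/2-b)=-\chi_n(b)$, so the second part equals $-\sum_b\chi_n(b)(D/2-b)$. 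Combining,
\[L(\chi_n,-1)=\sum_{a=1}^{D/4}\chi_n(a)\Bigl(a-\frac D4\Bigr)=\sum_{a=1}^{D/4}\chi_n(a)a-\frac D4\sum_{a=1}^{D/4}\chi_n(a).\]
The second term is $D/4=2^n$ times an element of $\BZ[\zeta_{2^n}]$, and $2^n$ is divisible by $2$ (indeed by $4$ when $n\ge 2$). Hence the congruence \eqref{sec3-cor-f1} holds modulo $2$, and even modulo $4$. The congruence is understood in $\BQ(\zeta_{2^n})$, where the values of $\chi_n$ lie.

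For $n=1$, $\chi_1$ is the even character modulo $8$ with $\chi_1(\pm1)=1$ and $\chi_1(\pm3)=-1$. The right-hand side of \eqref{sec3-cor-f1} is $\chi_1(1)\cdot1=1$. On the left, Lemma \ref{no-D} gives
\[L(\chi_1,-1)=\tfrac12\bigl(1-3\bigr)=-1,\]
and $-1\equiv 1 \pmod 2$. Alternatively, the splitting argument above still works here: the needed relation $\chi_1(4-b)=-\chi_1(b)$ holds for odd $b$ because $\chi_1(-1)=1$ and $\chi_1(3)=-1$. Either way the claim holds for all $n\ge1$.

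The only real subtlety is the reindexing at the endpoint $a=D/4$ and making sure the boundary terms vanish. This is harmless because $\chi_n$ vanishes at even arguments, so the terms $a=D/4$ and $a=D/2$ contribute nothing.
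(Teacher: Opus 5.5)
Your proof is correct and follows the paper's route: the paper obtains the corollary from Lemma \ref{bernoulli} and the first step of the proof of Proposition \ref{mod-4}. That step uses Lemma \ref{no-D} and Lemma \ref{lem2-1}(1) to write $L(\chi_n,-1)=\sum_{a=1}^{D/4}\chi_n(a)\left(a-\frac{D}{4}\right)$ and then drops the $\frac{D}{4}$-term. Your separate check of $n=1$ is a useful addition, because the paper cites Lemma \ref{lem2-1}, which is stated only for $n\ge 2$; you verify $\chi_1(4-b)=-\chi_1(b)$ directly and get $L(\chi_1,-1)=-1\equiv 1$.
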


\section{$2$-adic divisibility of Dirichlet $L$-values}\label{sec3}

We will prove Proposition \ref{2-div-psi-d} and Corollary \ref{zeta-value-2} in this section. As always, let $d$ denote a positive square-free odd integer, and put $D=2^{n+2}d$, where $n$ is a non-negative integer. According to the strategy outlined in the introduction, we first consider the case $d=1$, and then prove the general case by reducing it to this special case.

\subsection{$2$-adic divisibility of Dirichlet $L$-values for $d=1$ and $m=2$} 
In this case, note that $D=2^{n+2}$. We will prove Equalities \eqref{sec1-f1}, \eqref{sec1-f2} in this subsection by using the relations between the character sums established in the last section. Recall that we have fixed a compatible system of $2^n$-th roots of unity $\zeta_{2^n}$, satisfying $\zeta_{2^{n+1}}^2=\zeta_{2^n}$ for all $n\geq1$.

\begin{lem}\label{sec3-1}
Let $D=2^{n+2}$ and let $k$ be an integer such that $2\leq k\leq n$. For any integer $b$, we have
\[\chi_n\left(\frac{1}{2^k}D-b\right)\equiv \zeta_{2^k}\cdot\chi_n(b)\ \left({\rm mod}\ (1-\zeta_{2^{k-1}})\BZ[\zeta_{2^{n}}]\right).\]
\end{lem}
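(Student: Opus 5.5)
The plan is to reduce the statement to one evaluation of $\chi_n$ on a principal unit, and then to a congruence between primitive $2^k$-th roots of unity. Put $j=n+2-k$, so that $\frac{1}{2^k}D=2^{j}$ with $2\le j\le n$ because $2\le k\le n$. If $b$ is even, then $2^j-b$ is even too, and both sides vanish. So assume $b$ is odd and choose $b'$ with $bb'\equiv 1\mod 2^{n+2}$. Multiplicativity gives
\[\chi_n(2^j-b)=\chi_n(-b)\,\chi_n(1-2^jb').\]
Since $\chi_n$ is a character of ${\rm Gal}(\BQ_n/\BQ)$ and $\BQ_n$ is totally real, $\chi_n$ is even, so $\chi_n(-b)=\chi_n(b)$. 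It therefore suffices to show
\[\chi_n(1-2^jb')\equiv\zeta_{2^k}\ \left({\rm mod}\ (1-\zeta_{2^{k-1}})\BZ[\zeta_{2^n}]\right),\]
and then multiply by $\chi_n(b)$, which lies in $\BZ[\zeta_{2^n}]$.

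Next I compute the order of $u:=1-2^jb'$ in $(\BZ/2^{n+2}\BZ)^\times$. Since $j\ge2$ and $b'$ is odd, $u$ lies in the subgroup $1+4\BZ$. This subgroup is cyclic of order $2^n$, generated by $5$, and by the standard $2$-adic logarithm estimate an element $1+2^ju_0$ with $u_0$ odd and $j\ge2$ has order exactly $2^{n+2-j}=2^k$. The character $\chi_n$ has order $2^n$ and conductor $2^{n+2}$, and it is trivial on $-1$. Hence its restriction to $1+4\BZ$ is faithful: otherwise it would factor through a smaller modulus, contradicting primitivity. Consequently $\chi_n(u)$ has exact order $2^k$, i.e. $\chi_n(u)=\zeta_{2^k}^{t}$ with $t$ odd.

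It remains to check that $\zeta_{2^k}^t\equiv\zeta_{2^k}$ modulo $1-\zeta_{2^{k-1}}$ for odd $t$. Write
\[\zeta_{2^k}^t-\zeta_{2^k}=\zeta_{2^k}\bigl(\zeta_{2^{k-1}}^{(t-1)/2}-1\bigr),\]
using $\zeta_{2^k}^2=\zeta_{2^{k-1}}$. The factor $1-\zeta_{2^{k-1}}^{s}$ is divisible by $1-\zeta_{2^{k-1}}$ in $\BZ[\zeta_{2^{k-1}}]\subset\BZ[\zeta_{2^n}]$. This settles the claim, including the case $k=2$, where the modulus is $1-\zeta_2=2$.

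The only step needing care is the middle one: identifying the exact order of $1-2^jb'$ and justifying that $\chi_n$ is faithful on $1+4\BZ$. I would cite the structure $(\BZ/2^{n+2})^\times=\{\pm1\}\times\langle5\rangle$ for this. The result does not depend on which primitive $2^k$-th root $\chi_n(u)$ turns out to be, so no compatibility between $\chi_n$ and the fixed system $\zeta_{2^k}$ is needed.
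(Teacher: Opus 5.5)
Your proof is correct and follows the paper's own argument. Both reduce, via the inverse $b'$, to showing that $\chi_n(1-2^{n+2-k}b')$ is a primitive $2^k$-th root of unity, and then observe that all primitive $2^k$-th roots of unity agree modulo $1-\zeta_{2^{k-1}}$. You also make explicit two points the paper leaves implicit: the evenness of $\chi_n$, which is needed to pass between $\chi_n(b-2^{n+2-k})$ and $\chi_n(2^{n+2-k}-b)$, and the faithfulness of $\chi_n$ on $1+4\BZ$, which the paper calls ``a direct calculation''.
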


\begin{proof}
Note that the image of $\chi_n$ is equal to the group of $2^n$-th roots of unity.
For an integer $\alpha$ such that $n+2\geq \alpha\geq2$, and for an odd integer $c$, a direct calculation shows that $\chi_n(1+2^\alpha c)$ is a primitive $2^{n+2-\alpha}$-th root of unity. Let $b'$ be an integer such that $bb'\equiv 1\, ({\rm mod}\,  2^{n+2})$. Then we have
\[\frac{\chi_n(b-2^{n+2-k})}{\chi_n(b)}=\chi_n(1-2^{n+2-k}b'),\]
which is a primitive $2^k$-th root of unity. Working modulo $(1-\zeta_{2^{k-1}})\BZ[\zeta_{2^n}]$, this primitive $2^k$-th root of unity is uniquely determined. Therefore, the lemma follows.

\end{proof}

\begin{prop}\label{d-1}
Assume that $n$ is a positive integer, then $L(\chi_1,-1)=-1$ and for $n\geq2$, we have
\[L(\chi_n,-1)\equiv \prod^{n}_{k=2}(1-\zeta_{2^k})\,  ({\rm mod}\, 2\BZ[\zeta_{2^n}]).\]
In particular, 
\[{\rm ord}_2(L(\chi_n,-1))=1-2^{1-n}.\]
\end{prop}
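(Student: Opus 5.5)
I will argue by induction on $n$, peeling off one factor $(1-\zeta_{2^k})$ at a time from the character sum in Corollary \ref{sec3-cor1}.

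\textbf{The case $n=1$.} Here $D=8$, and $\chi_1$ is the even quadratic character of conductor $8$, namely $\psi_2$. Lemma \ref{bernoulli} gives
\[L(\chi_1,-1)=-\tfrac{1}{16}(1-9-25+49)=-1.\]

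\textbf{The case $n\ge 2$: reduction.} Put $D=2^{n+2}$. By Corollary \ref{sec3-cor1}, $L(\chi_n,-1)\equiv S_2 \pmod 2$, where $S_j:=\sum_{a=1}^{D/2^j}\chi_n(a)a$. I claim that for $2\le k\le n$,
\[S_k\equiv (1-\zeta_{2^k})\,S_{k+1}\pmod{2\BZ[\zeta_{2^n}]}.\]

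\textbf{Proof of the claim.} Split $S_k$ at $D/2^{k+1}$ and substitute $a\mapsto D/2^k-a$ in the upper half:
\[S_k=\sum_{a=1}^{D/2^{k+1}}\Bigl(\chi_n(a)a+\chi_n\bigl(\tfrac{D}{2^k}-a\bigr)\bigl(\tfrac{D}{2^k}-a\bigr)\Bigr).\]
\begin{itemize}
\item Since $D/2^k=2^{n+2-k}$ is even for $k\le n$, the term $(D/2^k)\cdot\chi_n(D/2^k-a)$ vanishes mod $2$. This leaves $\sum_a\bigl(\chi_n(a)-\chi_n(D/2^k-a)\bigr)a$.
\item Lemma \ref{sec3-1} gives $\chi_n(D/2^k-a)=\zeta_{2^k}\chi_n(a)+(1-\zeta_{2^{k-1}})x_a$. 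Hence the summand is $(1-\zeta_{2^k})\chi_n(a)a$ plus an error $(1-\zeta_{2^{k-1}})x_a a$.
\end{itemize}
The main obstacle is controlling this error, because $(1-\zeta_{2^{k-1}})$ is not divisible by $2$ once $k\ge 3$. I would sharpen Lemma \ref{sec3-1} as follows. Write $\chi_n(D/2^k-a)=\chi_n(a)\,\eta_a$, where $\eta_a=\chi_n(1-2^{n+2-k}a')$ and $aa'\equiv 1\pmod{2^{n+2}}$. Then $\eta_a=\zeta_{2^k}^{u(a)}$ with $u(a)$ odd and depending only on $a' \bmod 2^k$, since $\chi_n(1+2^{n+2-k}t)$ is a character of $t\bmod 2^k$. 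The error then becomes a sum $\sum_a\chi_n(a)a\,(\zeta_{2^k}-\zeta_{2^k}^{u(a)})$.

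To show this sum is $\equiv 0 \pmod 2$, I would pair $a$ with $a+D/2^{k+1}$, or use the symmetry from Lemma \ref{lem2-1}. The goal is that contributions with $u(a)\neq 1$ either cancel in pairs or carry an extra factor of $2$. If exact cancellation fails, the fallback is a weaker statement: prove the claim modulo $2(1-\zeta_{2^n})^{-1}\cdot(\text{unit})$. That is still enough to pin down the valuation in the final step, since only ${\rm ord}_2$ is needed.

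\textbf{Conclusion.} Iterating the claim from $k=2$ to $k=n$ gives
\[L(\chi_n,-1)\equiv \prod_{k=2}^n(1-\zeta_{2^k})\cdot S_{n+1}\pmod 2,\]
where $S_{n+1}=\sum_{a=1}^{2}\chi_n(a)a=\chi_n(1)=1$. This is the stated congruence.

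For the valuation, ${\rm ord}_2(1-\zeta_{2^k})=2^{1-k}$, so
\[{\rm ord}_2\Bigl(\prod_{k=2}^n(1-\zeta_{2^k})\Bigr)=\sum_{k=2}^n2^{1-k}=1-2^{1-n}<1.\]
A congruence mod $2$ with something of valuation less than $1$ forces ${\rm ord}_2(L(\chi_n,-1))=1-2^{1-n}$.
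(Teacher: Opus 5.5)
Your base case $n=1$, the reduction $L(\chi_n,-1)\equiv S_2\pmod 2$, and the final valuation count are all fine. The gap is the inductive claim $S_k\equiv(1-\zeta_{2^k})S_{k+1}\pmod{2\BZ[\zeta_{2^n}]}$: it fails in general for $k\ge 3$, so no pairing or cancellation argument can prove it. Take $n=k=3$, $D=32$, and the primitive character with $\chi_3(5)=\zeta_8$. Since $3\equiv-5^3\pmod{32}$ and $\chi_3$ is even, $\chi_3(3)=\zeta_8^3$. Hence $S_3=1+3\zeta_8^3$, $S_4=1$, and
\[S_3-(1-\zeta_8)S_4=\zeta_8(1+3\zeta_4)\equiv\zeta_8(1+\zeta_4)\pmod 2,\]
which has ${\rm ord}_2$ equal to $1/2$. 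Here $S_3$ has a single odd summand, so there is nothing to pair. Your fallback modulus has valuation $1-2^{1-n}=3/4>1/2$, so it fails as well. Even where such a congruence did hold, $L\equiv P$ modulo an ideal of the same valuation as $P$ only gives ${\rm ord}_2(L)\ge{\rm ord}_2(P)$, not equality.

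The missing idea is that the step-$k$ error never needs to be divisible by $2$ on its own. Lemma \ref{sec3-1} gives exactly $S_k\equiv(1-\zeta_{2^k})S_{k+1}$ modulo $(1-\zeta_{2^{k-1}})$. In the iteration, $S_k$ only ever appears multiplied by $P_{k-1}:=\prod_{j=2}^{k-1}(1-\zeta_{2^j})$. Since ${\rm ord}_2(P_{k-1})=1-2^{2-k}$ and ${\rm ord}_2(1-\zeta_{2^{k-1}})=2^{2-k}$, the ideal $(1-\zeta_{2^{k-1}})P_{k-1}\BZ[\zeta_{2^n}]$ equals $2\BZ[\zeta_{2^n}]$. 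Hence $P_{k-1}S_k\equiv P_kS_{k+1}\pmod 2$ for every $2\le k\le n$; for $k=2$ one has $P_1=1$ and $1-\zeta_2=2$. In the example above, the error $\zeta_8(1+3\zeta_4)=\zeta_8(1+\zeta_4)(2+\zeta_4)$ is divisible by $1-\zeta_4$ and gets multiplied by $P_2=1-\zeta_4$, so the result is divisible by $2$. This is exactly the paper's induction, and with it your chain closes without any sharpening of Lemma \ref{sec3-1}.
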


\begin{proof}
For $n=1$, noting that
\[\zeta(-1)=-\frac{1}{12},\quad \zeta_{\BQ(\sqrt{2})}(-1)=\frac{1}{12},\]
we have $L(\chi_1,-1)=-1$. Assume that $n\geq2$. All congruence below are taken in $\BZ[\zeta_{2^n}]$. For each $2\leq k\leq n$, using the identity 
\[\sum^{\frac{D}{2^k}}_{a=1}\chi_n(a)a=\sum^{\frac{D}{2^{k+1}}}_{a=1}\left(\chi_n(a)a+\chi_n\left(\frac{D}{2^k}-a\right)\cdot\left(\frac{D}{2^k}-a\right)
\right)\]
and applying Lemma \ref{sec3-1}, we obtain the following congruence 
\begin{equation}\label{sec3-f1}
  \sum^{\frac{D}{2^k}}_{a=1}\chi_n(a)a\equiv (1-\zeta_{2^k})\sum^{\frac{D}{2^{k+1}}}_{a=1}\chi_n(a)a\mod (1-\zeta_{2^{k-1}}).
\end{equation}
By Lemmas \ref{bernoulli}, \ref{no-D} and (1) of Lemma \ref{lem2-1}, we have
\[L(\chi_n,-1)=\frac{1}{2}\sum^{\frac{D}{2}}_{a=1}\chi_n(a)a\equiv \sum^{\frac{D}{4}}_{a=1}\chi_n(a)a \, ({\rm mod}\, 2).\]
Applying \eqref{sec3-f1} with $k=2$, we obtain
\[L(\chi_n,-1)\equiv(1-\zeta_{2^2})\sum^{\frac{D}{8}}_{a=1}\chi_n(a)a\, ({\rm mod}\, 2).\]
Suppose that for an integer $m$ with $3\leq m\leq n$, we have the congruence
\[L(\chi_n,-1)\equiv \prod^{m-1}_{j=2}(1-\zeta_{2^j})\cdot \sum^{\frac{D}{2^m}}_{a=1}\chi_n(a)a\, ({\rm mod}\,  2).\]
Noting the equality of ideals
\[(1-\zeta_{2^{m-1}})\cdot\prod^{m-1}_{j=2}(1-\zeta_{2^j})\BZ[\zeta_{2^n}]=2\BZ[\zeta_{2^n}], \]
and applying equation \eqref{sec3-f1} with $k=m$, we obtain
\[\prod^{m-1}_{j=2}(1-\zeta_{2^j})\cdot \sum^{\frac{D}{2^m}}_{a=1}\chi_n(a)a\equiv \prod^{m}_{j=2}(1-\zeta_{2^m})\sum^{\frac{D}{2^{m+1}}}_{a=1}\chi_n(a)a\, ({\rm mod}\,  2).\]
Iterating this process, we get
\[L(\chi_n,-1)\equiv \prod^{n}_{k=1}(1-\zeta_{2^k})\cdot\sum^{D/2^{n+1}}_{a=1}\chi_n(a)a
=\prod^{n}_{k=1}(1-\zeta_{2^k})\, ({\rm mod}\, 2).\]
Here we recall that $D=2^{n+2}$, so $\sum^2_{a=1}\chi_n(a)a=1$. Therefore, the proposition follows.

\end{proof}

\subsection{$2$-adic divisibility of Dirichlet $L$-values for $d=1$ and general $m$} 
In this subsection, we determine the $2$-adic valuation of $L(\chi_n,1-m)$ for all $n\geq1$ and all positive even integers $m$. 

We begin by presenting several lemmas that will be used in the proof for both cases: $d=1$ and general $d$. We omit the proof of the following lemma, as it is a standard exercise in calculus.

\begin{lem}\label{sec3-2}
Denote by $\log x$ the natural logarithm of $x$. For every real number $x\geq2$, we have 
(1) $3x-3-\frac{\log x}{\log 2}\geq2$; 
(2) $4x-4-\frac{\log x}{\log 2}\geq3$; (3) if $x\geq3$, $2x-3-\frac{\log x}{\log 2}\geq1$; (4) if $x\geq3$, $3x-4-\frac{\log x}{\log 2}\geq 2$. 
\end{lem}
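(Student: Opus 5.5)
Each of the four inequalities has the form $g(x)=ax-b-\frac{\log x}{\log 2}\geq c$ with $a\in\{2,3,4\}$. I would prove all of them in the same way: check the inequality at the left endpoint of the interval, then show $g$ is nondecreasing to the right of that point.

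\emph{Monotonicity.} We have
\[g'(x)=a-\frac{1}{x\log 2}.\]
For $x\geq 2$ this gives
\[\frac{1}{x\log 2}\leq \frac{1}{2\log 2}=\frac{1}{\log 4}<1,\]
since $\log 4>1$. As $a\geq 2$, we get $g'(x)>0$ on $[2,\infty)$. So in every case $g$ is strictly increasing on the relevant interval, $[2,\infty)$ for (1),(2) and $[3,\infty)$ for (3),(4). It therefore suffices to verify each inequality at the left endpoint.

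\emph{Endpoint checks.}
\begin{itemize}
\item[(1)] At $x=2$: $6-3-1=2\geq 2$.
\item[(2)] At $x=2$: $8-4-1=3\geq 3$.
\item[(3)] At $x=3$: $6-3-\log_2 3=3-\log_2 3$. Since $\log_2 3<2$, this exceeds $1$.
\item[(4)] At $x=3$: $9-4-\log_2 3=5-\log_2 3>3\geq 2$.
\end{itemize}

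The only points needing care are the elementary bounds $\log 4>1$ (equivalently $4>e$) and $\log_2 3<2$ (equivalently $3<4$). Cases (1) and (2) hold with equality at $x=2$, so there the argument relies on monotonicity rather than any slack at the endpoint.
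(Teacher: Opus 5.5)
Your proof is correct, and it is exactly the routine calculus verification the paper has in mind when it omits the proof as ``a standard exercise in calculus.'' You show $g'(x)=a-\frac{1}{x\log 2}>0$ on $[2,\infty)$ for $a\geq 2$, which reduces each inequality to the endpoint checks at $x=2$ or $x=3$, and those checks are done correctly (with equality in (1) and (2)).
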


Recall that $D=2^{n+2}d$, where $d$ is a square free positive odd integer.

\begin{lem}\label{sec3-3}
Let $n,k$ be two positive integers. Assume $k\geq2$.
\begin{enumerate}
  \item [(1)]We have ${\rm ord}_2\left(\frac{D^{k-1}}{k}\right)\geq2$. If $n\geq2$, we have ${\rm ord}_2\left(\frac{D^{k-1}}{k}\right)\geq3$.
  \item [(2)]We have ${\rm ord}_2\left(\frac{D^{k-1}}{k2^k}\right)\geq1$ unless $n=1$ and $k=2$.
  \item [(3)]If $n\geq2$, we have ${\rm ord}_2\left(\frac{D^{k-1}}{k2^k}\right)\geq2$ unless $n=2$ and $k=2$. 
\end{enumerate}
\end{lem}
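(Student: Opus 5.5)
Write $D=2^{n+2}d$ with $d$ odd. Then
\[
{\rm ord}_2\left(\frac{D^{k-1}}{k}\right)=(n+2)(k-1)-{\rm ord}_2(k),\qquad
{\rm ord}_2\left(\frac{D^{k-1}}{k2^k}\right)=(n+2)(k-1)-k-{\rm ord}_2(k).
\]
Since $d$ is odd it contributes nothing. For the remaining term we use ${\rm ord}_2(k)\le \log k/\log 2$, and then compare the linear part in $k$ with the inequalities of Lemma \ref{sec3-2}, taking $x=k$. Both expressions increase with $n$, so in each part it suffices to treat the smallest admissible $n$. The finitely many small values of $k$ not covered by the relevant clause of Lemma \ref{sec3-2} will be checked by hand.

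For (1) with $n\ge1$, the valuation is at least $3(k-1)-{\rm ord}_2(k)\ge 3k-3-\log k/\log 2\ge 2$, by Lemma \ref{sec3-2}(1), since $k\ge2$. If $n\ge2$, it is at least $4k-4-\log k/\log 2\ge 3$, by Lemma \ref{sec3-2}(2).

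For (2) with $n\ge1$, the valuation is at least $3(k-1)-k-{\rm ord}_2(k)=2k-3-{\rm ord}_2(k)$. For $k\ge3$, Lemma \ref{sec3-2}(3) gives that this is $\ge1$. For $k=2$ the bound is $4-3-1=0$ when $n=1$, which is the stated exception. When $n\ge2$ and $k=2$ the valuation is $(n+2)-2-1=n-1\ge1$.

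For (3) with $n\ge2$, the valuation is at least $4(k-1)-k-{\rm ord}_2(k)=3k-4-{\rm ord}_2(k)$. For $k\ge3$, Lemma \ref{sec3-2}(4) gives $\ge2$. For $k=2$ the valuation is exactly $n-1$, which is $\ge2$ precisely when $n\ge3$, and this gives the exception $n=k=2$.

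The only real obstacle is bookkeeping: we must make sure the monotonicity in $n$ is used correctly, and that the exceptional pairs $(n,k)$ are exactly those listed, with no others slipping through at $k=2$ or $k=3$.
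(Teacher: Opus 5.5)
Your proposal is correct and follows the same route as the paper: compute the exact valuations $(n+2)(k-1)-{\rm ord}_2(k)$ and $(n+2)(k-1)-k-{\rm ord}_2(k)$, bound ${\rm ord}_2(k)\le \log k/\log 2$, and apply Lemma \ref{sec3-2} with $x=k$. Your separate treatment of $k=2$ via the exact value $n-1$ is actually needed: clauses (3) and (4) of Lemma \ref{sec3-2} require $x\ge 3$, and the paper's one-line bound $3k-4-\log k/\log 2$ gives only $1$ at $k=2$, so it does not by itself cover the case $k=2$, $n\ge 3$ in part (3).
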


\begin{proof}
This lemma follows from Lemma \ref{sec3-2}, we only show (3). If $n\geq2,k\geq2$, noting that ${\rm ord}_2(k)\leq \frac{\log k}{\log 2}$, we obtain
\[{\rm ord}_2\left(\frac{D^{k-1}}{k 2^k}\right)\geq 3k-4-\frac{\log k}{\log 2}\geq 2\]
unless $n=k=2$.
\end{proof}

For $\fN=D$ or $\frac{D}{2}$, we define
\[\mss(\fN)=\frac{1}{2}\sum^{\fN}_{a=1}\chi_n(a)a^{m-1}.\]
We also define
\[\mst(D)=\frac{2}{mD}\sum^{\frac{D}{2}}_{a=1}\chi_n(a)a^m.\]

\medskip

\begin{lem}\label{sec3-4}
\noindent
\begin{enumerate}
  \item [(1)]The number $\mss(D)$ is integral and $\mss(D)\equiv 0 ({\rm mod}\, 4)$. 
  \item [(2)] We have $\mst(D)\equiv \mss(\frac{D}{2}) ({\rm mod}\, 2)$. If $n\geq2$, then $\mst(D)\equiv \mss(\frac{D}{2}) ({\rm mod}\, 4)$.
\end{enumerate}
 \end{lem}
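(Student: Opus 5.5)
The plan is to prove both parts by the same reflection trick as in Lemma \ref{no-D}: pair each $a$ with a reflected partner, expand binomially, and read off the $2$-adic size of each term. Throughout, $\chi=\chi_n$ is even, $m$ is even, and $D=2^{n+2}d$. All sums lie in $\BZ[\zeta_{2^n}]$, so integrality comes down to controlling the $2$-adic valuations of the rational coefficients.

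For (1), I use $\chi(D-a)=\chi(a)$ to write
\[\mss(D)=\frac12\sum_{a=1}^{D/2}\chi(a)\bigl(a^{m-1}+(D-a)^{m-1}\bigr).\]
The term $a=D/2$ is even, so $\chi$ vanishes there. Since $m-1$ is odd,
\[(D-a)^{m-1}=-a^{m-1}+\sum_{j\ge1}\binom{m-1}{j}D^j(-a)^{m-1-j},\]
so the leading terms cancel. The $j=1$ term contributes $\frac{(m-1)D}{2}\sum\chi(a)a^{m-2}$, and $D/2$ is divisible by $2^{n+1}\ge4$. The terms with $j\ge2$ contribute multiples of $D^2/2$, which is divisible by $2^{2n+3}$. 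Hence $\mss(D)$ is integral and $\mss(D)\equiv0\ ({\rm mod}\ 4)$.

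For (2), I use the relation $\chi(\frac D2-a)=-\chi(a)$ established in the proof of Lemma \ref{no-D}. Substituting $a\mapsto \frac D2-a$ in $\sum_{a\le D/2}\chi(a)a^m$ gives
\[2\sum_{a=1}^{D/2}\chi(a)a^m=\sum_{a=1}^{D/2}\chi(a)\Bigl(a^m-\bigl(\tfrac D2-a\bigr)^m\Bigr)=-\sum_{a=1}^{D/2}\chi(a)\sum_{j\ge1}\binom{m}{j}\Bigl(\tfrac D2\Bigr)^j(-a)^{m-j}.\]
Dividing by $mD$ and using $\binom mj/m=\binom{m-1}{j-1}/j$, the term $j$ of $\mst(D)$ becomes
\[\pm\binom{m-1}{j-1}\frac{D^{j-1}}{j\,2^j}\sum_{a=1}^{D/2}\chi(a)a^{m-j}.\]
The $j=1$ term is exactly $\frac12\sum_{a\le D/2}\chi(a)a^{m-1}=\mss(\frac D2)$, after checking the sign $(-1)\cdot(-1)^{m-1}=+1$. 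For $j\ge2$, Lemma \ref{sec3-3}(2),(3) give valuation $\ge1$ (respectively $\ge2$ when $n\ge2$). This proves the claim apart from the exceptional pairs $(n,j)=(1,2)$ and $(2,2)$.

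The main obstacle is these exceptional $j=2$ terms, which equal $\pm\frac{(m-1)D}{8}\sum_{a\le D/2}\chi(a)a^{m-2}$. The coefficient $D/8$ has valuation $n-1$, which is $0$ or $1$ and so falls short. I will recover the missing factor by applying the same reflection to $\Sigma:=\sum_{a\le D/2}\chi(a)a^{m-2}$. Since $m-2$ is even, $(\frac D2-a)^{m-2}\equiv a^{m-2}\ ({\rm mod}\ \frac D2)$, and combined with the sign flip $\chi(\frac D2-a)=-\chi(a)$ this gives $\Sigma\equiv-\Sigma\ ({\rm mod}\ \frac D2)$. So $2\Sigma\equiv0\ ({\rm mod}\ D/2)$, that is, $\Sigma\equiv0\ ({\rm mod}\ D/4)$. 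This adds valuation at least $n\ge1$ and brings the $j=2$ term to valuation $\ge 2n-1$. That is $\ge1$ for $n=1$ and $\ge3$ for $n=2$, which is enough in both exceptional cases and completes (2).
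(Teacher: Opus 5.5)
Your proposal is correct and follows essentially the same route as the paper. Part (1) uses the reflection $a\mapsto D-a$. Part (2) expands the identity $\sum_{a\le D/2}\chi_n(a)\bigl(a^m+(\frac D2-a)^m\bigr)=0$ binomially and applies Lemma \ref{sec3-3}, then uses a second reflection to kill the exceptional $k=2$ term. The only differences are minor: you bound that term via the antisymmetry $\Sigma\equiv-\Sigma\pmod{D/2}$, getting $\Sigma\equiv 0\pmod{D/4}$, while the paper folds the sum onto $[1,D/4]$ to get the slightly stronger $\Sigma\equiv 0\pmod{D/2}$; and you treat the $n=1$ exceptional case explicitly, which the paper leaves as ``shown similarly.''
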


\begin{proof}
Note the identity
\[\mss(D)=\frac{1}{2}\sum^{\frac{D}{2}}_{a=1}\chi_n(a)\cdot \left(a^{m-1}+(D-a)^{m-1}\right).\]
Recall that $D=2^{n+2}d$ with $n\geq1$ and $d$ an odd integer. It follows that $\mss(D)$ is $2$-adically integral. Moreover, since $m$ is even,
we obtain 
\[\mss(D)\equiv \frac{1}{2}\sum^{\frac{D}{2}}_{a=1}\chi_n(a)(a^{m-1}+(-a)^{m-1})=0\, ({\rm mod}\, 4).\]
Hence the part (1) follows. For the second part, we prove only the congruence modulo $4$, as the modulo $2$ congruence can be shown similarly. From Lemma \ref{lem2-1}, we have $\chi_n\left(\frac{D}{2}-a\right)=-\chi_n(a)$. Therefore, we get
\[\frac{1}{mD}\sum^{\frac{D}{2}}_{a=1}\chi_n(a)\left(a^m+
\left(\frac{D}{2}-a\right)^m\right)=0,\]
equivalently,
\begin{equation}\label{sec3-f2}
  0=\sum^{\frac{D}{2}}_{a=1}\chi_n(a)\cdot\left(\frac{2}{mD}a^m+
\sum^m_{k=1}\frac{D^{k-1}}{k 2^k}\binom{m-1}{k-1}(-a)^{m-k}\right).
\end{equation}
Consider the terms in the sum in the bracket. From Lemma \ref{sec3-3}, for all $k\geq2$, we have
\[{\rm ord}_2\left(\frac{D^{k-1}}{k 2^k}\right)\geq 2\]
unless $n=k=2$. In this exceptional case, we analyze the term
\[\frac{D}{8}\sum^{\frac{D}{2}}_{a=1}\chi_n(a)\cdot (m-1)a^{m-2}.\]
It turns out to be
\[(2d)\cdot\sum^{\frac{D}{4}}_{a=1}\chi_n(a)\cdot(m-1)\cdot \left(a^{m-2}-\left(2^3d-a\right)^{m-2}\right)\equiv 0\, ({\rm mod}\, 4).\]
From equation \eqref{sec3-f2}, we then deduce
\[\mst(D)\equiv \frac{1}{2}\sum^{\frac{D}{2}}_{a=1}\chi_n(a)a^{m-1}\, ({\rm mod}\, 4).\]
This completes the proof in the case 
$n\geq2$.
\end{proof}

\medskip

Now we come back to the case $d=1$, i.e., $D=2^{n+2}$. 

\begin{prop}\label{sec3-5}
Let $n$ be a positive integer, and let $m$ be a positive even integer. Then $L(\chi_n,1-m)$ is $2$-adically integral, and we have
\[L(\chi_n,1-m)\equiv L(\chi_n,-1)\, ({\rm mod}\, 2).\]
In particular, ${\rm ord}_2(L(\chi_n,1-m))=1-2^{1-n}$.
\end{prop}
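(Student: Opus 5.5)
We plan to write $L(\chi_n,1-m)$ as a Bernoulli sum and compare it with $\mss(D/2)$, which reduces mod $2$ to the sum in Corollary \ref{sec3-cor1}. Throughout $D=2^{n+2}$, the conductor of $\chi_n$.

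First, by Lemma \ref{bernoulli} and \eqref{sec202} with $D_0=D$,
\[L(\chi_n,1-m)=-\frac{B_{m,\chi_n}}{m}=-\frac{D^{m-1}}{m}\sum_{a=1}^{D}\chi_n(a)B_m\!\left(\frac{a}{D}\right).\]
Expanding $B_m(X)$ by \eqref{sec201} gives
\[L(\chi_n,1-m)=-\sum_{k=0}^{m}\binom{m}{k}\frac{B_k D^{k-1}}{m}\sum_{a=1}^{D}\chi_n(a)a^{m-k}.\]
We now examine the terms one at a time.

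The $k=0$ term is $-\frac{1}{mD}\sum_{a=1}^D\chi_n(a)a^m$. Pairing $a$ with $D-a$ and using that $\chi_n$ is even, it becomes
\[-\frac{1}{mD}\sum_{a=1}^{D/2}\chi_n(a)\bigl(a^m+(D-a)^m\bigr)=-\mst(D)+(\text{binomial terms}).\]
The linear binomial term is $\sum_{a\le D/2}\chi_n(a)a^{m-1}=2\mss(D/2)$, and the higher terms carry factors $D^{j-1}/m$ with $j\geq2$. The $k=1$ term ($B_1=-\frac12$) is $\frac{1}{2}\sum_{a=1}^D\chi_n(a)a^{m-1}=\mss(D)$, which is $\equiv0$ mod $4$ by Lemma \ref{sec3-4}(1). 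For $k\geq2$ we only need even $k$, and then von Staudt--Clausen gives $\mathrm{ord}_2(B_k)=-1$. Writing $\binom{m}{k}\frac{B_k D^{k-1}}{m}=\binom{m-1}{k-1}\frac{B_kD^{k-1}}{k}$, Lemma \ref{sec3-3}(1) shows the coefficient has valuation $\geq 2-1=1$. The remaining factor $\sum\chi_n(a)a^{m-k}$ is integral, so these terms vanish mod $2$.

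Collecting the terms, and using Lemma \ref{sec3-4}(2) to get $\mst(D)\equiv\mss(D/2)$ mod $2$, we obtain
\[L(\chi_n,1-m)\equiv -\mst(D)+2\mss(D/2)\equiv \mss(D/2)=\frac12\sum_{a=1}^{D/2}\chi_n(a)a^{m-1}\pmod 2.\]
In particular $L(\chi_n,1-m)$ is $2$-adically integral, since $\mss(D/2)$ is integral by the pairing argument of Lemma \ref{no-D}.

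To finish, we pair $a$ with $D/2-a$ using $\chi_n(D/2-a)=-\chi_n(a)$ from Lemma \ref{lem2-1}(1) (the case $n=1$ is checked directly). This gives $\mss(D/2)\equiv\sum_{a=1}^{D/4}\chi_n(a)a^{m-1}$ mod $2$. Since $m-1$ is odd and $a$ is odd, we have $a^{m-1}\equiv a$ mod $2$, and comparing with Corollary \ref{sec3-cor1} yields $L(\chi_n,1-m)\equiv L(\chi_n,-1)$ mod $2$. Proposition \ref{d-1} then gives $\mathrm{ord}_2=1-2^{1-n}<1$, and the congruence mod $2$ preserves this valuation.

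The main obstacle is the bookkeeping of the binomial tail in the $k=0$ term. Terms like $\frac{D^{j-1}}{m}\binom{m}{j}$ must be shown to have positive valuation even when $\mathrm{ord}_2(m)$ is large. We will handle this by rewriting them as $\frac{D^{j-1}}{j}\binom{m-1}{j-1}$ and invoking Lemma \ref{sec3-3}(1), exactly as in \eqref{sec3-f2}.
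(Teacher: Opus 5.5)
Your proposal is correct and is essentially the paper's own proof. You expand $B_{m,\chi_n}$ via \eqref{sec201}, discard the $k\geq 2$ terms by von Staudt--Clausen and Lemma \ref{sec3-3}(1), and pair $a\leftrightarrow D-a$ in the $k=0$ term to reach $L(\chi_n,1-m)\equiv -\mst(D)+2\mss(D/2)+\mss(D)$; then you reduce to $\mss(D/2)$ by Lemma \ref{sec3-4}, pair $a\leftrightarrow D/2-a$, and compare with Corollary \ref{sec3-cor1}, exactly as in the paper (your explicit note that Lemma \ref{lem2-1}(1) needs a separate check when $n=1$ is a small point of care the paper passes over).
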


\begin{proof}
The key idea is to apply the von Staudt--Clausen theorem to reduce the Bernoulli numbers appearing in the expression for $L(\chi_n,1-m)$ to character sums such as $\mss(D)$ and $\mst(D)$ considered in the above.  

From Lemma \ref{bernoulli}, and formulas \eqref{sec202}, \eqref{sec201}, we get
\[-L(\chi_n,1-m)=\frac{1}{mD}\sum^D_{a=1}\chi_n(a)a^m+\frac{1}{m}
\sum^D_{a=1}\chi_n(a)\sum^m_{k=1}\binom{m}{k}B_k\cdot D^{k-1}\cdot a^{m-k}.\]
This equality can be rewritten as
\begin{equation}\label{sec3001}
-L(\chi_n,1-m)=\frac{1}{mD}\sum^D_{a=1}\chi_n(a)a^m+\sum^D_{a=1}
\chi_n(a)\sum^m_{k=1}\binom{m-1}{k-1}\frac{D^{k-1}}{k}B_k\cdot a^{m-k}.
\end{equation}
By Lemma \ref{sec3-3} (1) and the von Staudt-Clausen Theorem, we know that
\[\sum^m_{k=2}\binom{m-1}{k-1}\frac{D^{k-1}}{k}B_k\cdot a^{m-k}\equiv 0 \, ({\rm mod}\, 2).\] 
Since $B_1=-\frac{1}{2}$, \eqref{sec3001} implies that
\begin{equation}\label{sec3002}
 L(\chi_n,1-m)\equiv -\frac{1}{mD}\sum^D_{a=1}\chi_n(a)a^m+\mss(D)\, ({\rm mod}\, 2).
\end{equation}
We now analyze the term
$\frac{1}{mD}\sum^D_{a=1}\chi_n(a)a^m$.
Note the identity
\[\frac{1}{mD}\sum^D_{a=1}\chi_n(a)a^m=\frac{1}{mD}
\sum^{\frac{D}{2}}_{n=1}\chi_n(a)\left(a^m+(D-a)^m\right).\]
Using the binomial expansion and the same computation method as in \eqref{sec3001}, and applying Lemma \ref{sec3-3} (1), we obtain
\begin{equation}\label{sec3003}
  \frac{1}{mD}\sum^D_{a=1}\chi_n(a)a^m\equiv \mst(D)-2\mss\left(\frac{D}{2}\right)\, ({\rm mod}\, 2).
\end{equation}
Therefore, the formula \eqref{sec3002} becomes to be
\[L(\chi_n,1-m)\equiv -\mst(D)+2\mss\left(\frac{D}{2}\right)+\mss(D)\, ({\rm mod}\, 2).\]
From Lemma \ref{sec3-4}, we obtain
\begin{equation}\label{sec3004}
  L(\chi_n,1-m)\equiv \mss\left(\frac{D}{2}\right)\, ({\rm mod}\, 2).
\end{equation}
By definition,
\[\mss\left(\frac{D}{2}\right)=\frac{1}{2}\sum^{\frac{D}{4}}_{a=1}\chi_n(a)
\left(a^{m-1}-\left(\frac{D}{2}-a\right)^{m-1}\right)\equiv \sum^{\frac{D}{4}}_{a=1}\chi_n(a)a\, ({\rm mod}\, 2).\]
Hence, by \eqref{sec3004}, $L(\chi_n,1-m)$ is $2$-adic integral. Finally, by Corollary \ref{sec3-cor1}, we conclude that 
\[ L(\chi_n,1-m)\equiv L(\chi_n,-1) \, ({\rm mod}\, 2).\]
This finishes the proof of the proposition.
 \end{proof}

\subsection{$2$-adic divisibility of Dirichlet $L$-values for general $d$ and $m$} 
We now turn to the case $d>1$. For an odd prime $p$, we denote by $p^*=\left(\frac{-1}{p}\right)p$ and put $f_p={\rm ord}_2\left(\frac{p^*-1}{4}\right)$.
We will prove the following theorem in this subsection.

\begin{thm}\label{sec3-3-1}
Let $n,d$ be two positive integers. Assume that $d$ is odd square-free and $d\geq2$. Let $\psi_d$ be the Dirichlet character associated to $\BQ(\sqrt{d})/\BQ$. There exists a positive integer $n_d$ such that for all integers $n\geq n_d$, we have 
\[\ord_2(L(\chi_n\psi_d,1-m))=1+2^{1-n}\left(-1+\sum_{p\mid d}2^{f_p}\right).\]
\end{thm}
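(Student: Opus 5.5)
The plan follows the outline in the introduction. I reduce $L(\chi_n\psi_d,1-m)$ to an imprimitive $L$-value of $\chi_n$ alone, then compute the latter with the $d=1$ results and explicit Euler factors.

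\textbf{Step 1: congruence modulo $2(1+\zeta_4)$.} Put $D=2^{n+2}d$ and choose $n\geq2$ large with $\chi_n(2^n-1)=\pm\zeta_4$. I will show
\[
L^{(D)}(\chi_n,1-m)+L(\chi_n\psi_d,1-m)\equiv 0 \pmod{2(1+\zeta_4)}.
\]
For $m=2$ I apply Proposition \ref{mod-4} twice, both times with modulus $D$.
\begin{itemize}
\item For $\psi=\psi_d$ (even, conductor dividing $4d$), Lemma \ref{bernoulli} identifies the left side with $L(\chi_n\psi_d,-1)$.
\item For $\psi=\psi_d^2$ (trivial on units prime to $d$), Lemma \ref{imprimitive} identifies it with $L^{(D)}(\chi_n,-1)$.
\end{itemize}
Both become $(1\mp\zeta_4)$ times a sum over $a\le D/8$. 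The two sums differ only through $\psi_d(a)\in\{0,\pm1\}$ versus $\psi_d(a)^2$, and $\psi_d(a)\equiv\psi_d(a)^2 \pmod 2$. So their sum is $(1\mp\zeta_4)\cdot 2\cdot(\text{integral})$, which lies in $2(1+\zeta_4)$.

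For general even $m$, I repeat the von Staudt--Clausen reduction of Proposition \ref{sec3-5} with $\chi=\chi_n\psi$ in place of $\chi_n$. Lemmas \ref{sec3-3} and \ref{sec3-4} are already stated for $D=2^{n+2}d$. I need this reduction to hold modulo $2(1+\zeta_4)$ rather than only modulo $2$. That is, $L(\chi,1-m)\equiv L(\chi,-1)$ must hold to this finer precision for both choices of $\psi$; I will check the error terms using the mod-$4$ parts of Lemma \ref{sec3-4}.

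\textbf{Step 2: valuation of the imprimitive value.} We have
\[
L^{(D)}(\chi_n,1-m)=\prod_{p\mid d}\bigl(1-\chi_n(p)p^{m-1}\bigr)\,L(\chi_n,1-m),
\]
and Proposition \ref{sec3-5} gives $\ord_2 L(\chi_n,1-m)=1-2^{1-n}$. Fix $p\mid d$. Since $m$ is even, $p^{m-1}\equiv p\pmod 8$, and for large $n$, $\chi_n(p)p^{m-1}$ is a root of unity times a unit close to $\pm1$.

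Write $\chi_n(p)=\chi_n(p^*)$, using $\chi_n(-1)=1$. Here $p^*\equiv1\pmod 4$ and $\ord_2(p^*-1)=f_p+2$, so $\chi_n(p^*)$ is a primitive $2^{n-f_p}$-th root of unity. Then $1-\chi_n(p)$ has valuation $2^{f_p-n}\cdot 2^{0}\ldots$; precisely, it is $1/\varphi(2^{n-f_p})=2^{1+f_p-n}$. For $n$ large this is smaller than the valuation of the correction coming from $p^{m-1}$ versus $\pm1$. Hence $\ord_2(1-\chi_n(p)p^{m-1})=2^{1+f_p-n}$.

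I need to handle the sign carefully: $\chi_n(p)p^{m-1}=\chi_n(p^*)\bigl(\tfrac{-1}{p}\bigr)p^{*\,m-1}$, and $p^{*\,m-1}\equiv1$ to high precision only after multiplying by a suitable character value. The cleanest route is to write $p^{m-1}=\pm u$ with $u$ a $1$-unit and compare valuations directly. Summing over $p\mid d$ gives
\[
\ord_2 L^{(D)}(\chi_n,1-m)=1-2^{1-n}+2^{1-n}\sum_{p\mid d}2^{f_p},
\]
which is the claimed value. It is smaller than $\ord_2(2(1+\zeta_4))=3/2$ once $n$ is large, since the sum is fixed and $2^{1-n}\to0$.

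\textbf{Step 3: conclusion.} By Step 1 and the strict inequality in Step 2, $L(\chi_n\psi_d,1-m)$ has the same valuation as $L^{(D)}(\chi_n,1-m)$. This proves the theorem for $n\geq n_d$, where $n_d$ makes the inequalities of Step 2 hold.

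The main obstacle is Step 1 for general $m$, namely getting the congruence to precision $2(1+\zeta_4)$ and not merely $2$. If this proves too hard, I would instead prove $L(\chi_n\psi_d,1-m)\equiv L(\chi_n\psi_d,-1)$ modulo $2(1+\zeta_4)$ directly. To do that I would use Kummer-type congruences between $\chi\omega^{-m+2}$-twisted Bernoulli numbers, noting that $\omega^{m-2}$ is trivial mod $4$ because $m$ is even.
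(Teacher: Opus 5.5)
Your proposal is correct and follows the paper's proof essentially step for step. You establish $L^{(D)}(\chi_n,1-m)+L(\chi_n\psi_d,1-m)\equiv 0\ ({\rm mod}\ 2(1+\zeta_4))$ by applying the $(1\mp\zeta_4)$-folding to both $\chi_n\psi_d$ and $\chi_n\psi_d^2$ and using $\psi_d(a)+\psi_d(a)^2\in 2\BZ$; you then count Euler factors with Proposition \ref{sec3-5} and conclude by the strong triangle inequality once the valuation drops below $3/2$. The general-$m$ reduction you defer is exactly the paper's Lemma \ref{sec3-3-2}, namely $L(\eta,1-m)\equiv\sum_{a=1}^{D/4}\eta(a)a\ ({\rm mod}\ 4)$ for $\eta=\chi_n\psi_d,\chi_n\psi_d^2$, and it goes through as you expect: the $k\ge 2$ terms vanish because ${\rm ord}_2(D^{k-1}/k)\ge 3$ for $n\ge 2$, and the single exceptional $n=k=2$ term is removed by pairing $a$ with $D/2-a$; also, the sign discussion in your Step 2 is unnecessary, since $1-\chi_n(p)p^{m-1}\equiv 1-\chi_n(p)\ ({\rm mod}\ 2)$ already gives the valuation.
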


A key step to prove the above theorem is to show the following lemma, which is a refinement of the proof of Proposition \ref{sec3-5}.

\begin{lem}\label{sec3-3-2}
Let $D=2^{n+2}d$, where $d$ is a positive odd square-free integer. Let $\eta$ denote one of the three characters $\chi_n$, $\chi_n\psi_d$ or $\chi_n\psi^2_d$. Assume that $n\geq2$, then 
$L(\eta,1-m)$ is $2$-adically integral, and 
\[L(\eta,1-m)\equiv \sum^{\frac{D}{4}}_{a=1}\eta(a)a\, ({\rm mod}\, 4).\]
\end{lem}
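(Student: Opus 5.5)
I will follow the proof of Proposition \ref{sec3-5} step by step, now with $\eta$ in place of $\chi_n$ and working modulo $4$ instead of modulo $2$. The hypothesis $n\geq2$ supplies the extra factor of $2$ needed in Lemma \ref{sec3-3}.

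\textbf{Setup.} Let $D=2^{n+2}d$. Each of the three characters is defined modulo $D$ (for $\chi_n\psi_d^2$ this is imprimitive), and each is even. I will treat $\chi_n$ and $\chi_n\psi_d$ through Lemma \ref{bernoulli} and formula \eqref{sec202} with $D_0=D$. For $\chi_n\psi_d^2$ I will use the same Bernoulli-polynomial expression. It computes the imprimitive value $L^{(D)}(\chi_n,1-m)=L(\chi_n\psi_d^2,1-m)$, in the spirit of Lemma \ref{imprimitive}. The character relations I need hold for all three $\eta$, because $\psi_d$ and $\psi_d^2$ are even of conductor dividing $4d$:
\begin{itemize}
\item $\eta(D-a)=\eta(a)$;
\item $\eta(D/2-a)=-\eta(a)$, by the argument of Lemma \ref{no-D}.
\end{itemize}
Define $\mss(\fN)$ and $\mst(D)$ with $\eta$ in place of $\chi_n$. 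The proof of Lemma \ref{sec3-4} used only these two relations, so it carries over: $\mss(D)\equiv 0\pmod 4$, and since $n\geq 2$, $\mst(D)\equiv\mss(D/2)\pmod 4$.

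\textbf{Main expansion.} Expanding as in \eqref{sec3001} gives
\[-L(\eta,1-m)=\frac{1}{mD}\sum_{a=1}^{D}\eta(a)a^m+\sum_{a=1}^{D}\eta(a)\sum_{k=1}^m\binom{m-1}{k-1}\frac{D^{k-1}}{k}B_k\,a^{m-k}.\]
For $k\geq2$, von Staudt--Clausen gives ${\rm ord}_2(B_k)\geq -1$, and Lemma \ref{sec3-3}(1) gives ${\rm ord}_2(D^{k-1}/k)\geq3$ when $n\ge2$. So every $k\geq2$ term is $\equiv0\pmod 4$. The $k=1$ term equals $\mss(D)$.

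Next I pair $a$ with $D-a$ in the first sum. The binomial expansion produces $\mst(D)-2\mss(D/2)$ plus terms with coefficients $D^{k-1}/k$ for $k\geq2$, and these vanish modulo $4$ by Lemma \ref{sec3-3}(1). Combining,
\[L(\eta,1-m)\equiv -\mst(D)+2\mss(D/2)+\mss(D)\equiv \mss(D/2)\pmod 4.\]
This also shows that $L(\eta,1-m)$ is $2$-adically integral.

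\textbf{Reducing $\mss(D/2)$.} Using $\eta(D/2-a)=-\eta(a)$, write
\[\mss(D/2)=\tfrac12\sum_{a=1}^{D/4}\eta(a)\Big(a^{m-1}-(D/2-a)^{m-1}\Big).\]
Expand $(D/2-a)^{m-1}$ by the binomial theorem.
\begin{itemize}
\item The $j=0$ term is $-(-a)^{m-1}=a^{m-1}$, so together with the first part it gives $\sum_{a=1}^{D/4}\eta(a)a^{m-1}$.
\item The $j=1$ term contributes $-\tfrac12(m-1)\frac{D}{2}\sum_{a=1}^{D/4}\eta(a)a^{m-2}$. Its $2$-adic order is at least $n\geq2$, so it vanishes modulo $4$.
\item Terms with $j\geq2$ have $\frac12(D/2)^j$ divisible by $2^{2n+1}$, so they also vanish.
\end{itemize}
Hence $\mss(D/2)\equiv\sum_{a=1}^{D/4}\eta(a)a^{m-1}\pmod 4$.

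\textbf{Last step and expected obstacle.} It remains to replace $a^{m-1}$ by $a$. Only odd $a$ contribute, and for odd $a$ we have $a^{m-1}=a\cdot a^{m-2}$. Since $m-2$ is even, $a^{m-2}\equiv1\pmod 8$, so $a^{m-1}\equiv a\pmod 8$, and the claimed congruence follows.

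The step I expect to be most delicate is the bookkeeping in Lemma \ref{sec3-3}. In the modulo-$4$ version there is an exceptional case $n=k=2$, which I will handle exactly as in the proof of Lemma \ref{sec3-4}. I also need to check that the imprimitive character $\chi_n\psi_d^2$ satisfies the Bernoulli formula at modulus $D$. It does: \eqref{sec202} with $D_0=D$ computes $B_{m,\chi_n}$ twisted by the Euler factors removed, and those factors are exactly what Lemma \ref{imprimitive} identifies.
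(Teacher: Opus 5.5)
Your proposal is correct and follows the paper's proof essentially step by step. You use the Bernoulli-polynomial expansion with von Staudt--Clausen and Lemma \ref{sec3-3}(1) to discard the $k\ge 2$ terms, pair $a\leftrightarrow D-a$ to obtain $\mst(D)-2\mss(D/2)$, invoke the Lemma \ref{sec3-4} congruences modulo $4$ (including the exceptional case $n=k=2$) to reduce to $\mss(D/2)$, and finish with $a^{m-1}\equiv a$. Your explicit remark that the modulus-$D$ Bernoulli formula computes the imprimitive value $L^{(D)}(\chi_n,1-m)$ for $\chi_n\psi_d^2$ is a useful point that the paper leaves implicit, and the sign of the $k=1$ term (it is $-\mss(D)$ in the expansion of $-L(\eta,1-m)$) does not matter since $\mss(D)\equiv 0 \pmod 4$.
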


\begin{proof}
Since the conductor of $\eta$ divides $D$, and using equation \eqref{sec202} and Lemma \ref{bernoulli}, we have
\[L(\eta,1-m)=-\frac{D^{k-1}}{m}\sum^D_{a=1}\eta(a)\sum^m_{k=0}\binom{m}{k}B_k\cdot a^{m-k}.\]
From Lemma \ref{sec3-3} (1), we know that 
${\rm ord}_2\left(\frac{D^{k-1}}{k}\right)\geq3$. By similar methods as in equations \eqref{sec3001}, \eqref{sec3002}, and invoking the von Staudt-Clausen theorem, we obtain
\begin{equation}\label{sec3-3-2-f1}
 L(\eta,1-m)\equiv -\frac{1}{mD}\sum^D_{a=1}\eta(a)a^m-\mss_\eta(D) \, ({\rm mod}\, 4), 
\end{equation}
where for a positive integer $\fN$, we define $\mss_\eta(\fN)=\frac{1}{2}\sum^\fN_{a=1}\eta(a)a^{m-1}$. For the term $\frac{1}{mD}\sum^D_{a=1}\eta(a)a^m$, using the same computation as in  \eqref{sec3003}, we get
\begin{equation}\label{sec3-3-2-f2}
  \frac{1}{mD}\sum^D_{a=1}\eta(a)a^m\equiv \mst_\eta(D)-2\mss_\eta(D/2) \, ({\rm mod}\, 4),
\end{equation}
where we define $\mst_\eta(D)=\frac{2}{mD}\sum^{\frac{D}{2}}_{a=1}\eta(a)a^m$. 
It follows, by the same method as in Lemma \ref{sec3-4}, that
$\mss_\eta(D)\equiv 0\,({\rm mod}\, 4)$ and $\mst_\eta(D)\equiv\mss_\eta(D/2)\, ({\rm mod}\, 4)$. Therefore, from \eqref{sec3-3-2-f1}, we obtain
\[L(\eta,1-m)\equiv \mss_\eta(D/2)\, ({\rm mod}\, 4).\]
Moreover, we observe that
\[\mss_\eta(D/2)=\frac{1}{2}\sum^{\frac{D}{4}}_{a=1}\eta(a)\cdot\left(a^{m-1}-\left(\frac{D}{2}-a\right)^{m-1}\right)\equiv
\sum^{\frac{D}{4}}_{a=1}\eta(a)a^{m-1}\, ({\rm mod}\, 4),\]
thus $L(\eta,1-m)$ is $2$-adically integral. Since $m$ is positive and even, and \(\eta(a)=0\) unless \(a\) is odd, we have $a^{m-1}\equiv a\, ({\rm mod}\, 4)$. Hence, the lemma follows.
\end{proof}

\begin{lem}\label{sec3-3-3}
Let $D=2^{n+2}d$, where $d$ is a positive odd square free integer. Let $\eta$ denote one of the three characters $\chi_n$, $\chi_n\psi_d$ or $\chi_n\psi^2_d$. Assume that $n\geq2$, then
\[L(\eta,1-m)\equiv (1\mp\zeta_4)\sum^{D/8}_{a=1}\eta(a)a\cdot\varepsilon_a\, ({\rm mod}\, 4).\]
where $\varepsilon_a$ is defined in Proposition \ref{mod-4} and $\chi_n(2^nd-1)=\pm\zeta_4$. 
\end{lem}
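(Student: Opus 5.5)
We combine Lemma \ref{sec3-3-2} with the folding argument in the proof of Proposition \ref{mod-4}. By Lemma \ref{sec3-3-2}, for $n\geq2$ the value $L(\eta,1-m)$ is $2$-adically integral and
\[L(\eta,1-m)\equiv \sum^{D/4}_{a=1}\eta(a)a \ ({\rm mod}\ 4).\]
So it suffices to show
\[\sum^{D/4}_{a=1}\eta(a)a\equiv(1\mp\zeta_4)\sum^{D/8}_{a=1}\eta(a)a\,\varepsilon_a\ ({\rm mod}\ 4).\]
This is exactly the second half of the proof of Proposition \ref{mod-4}, and it applies to $\eta$ once we have the analogue of \eqref{sec2-f1} for $\eta$.

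\textbf{Step 1: the reflection formula.} We write
\[\sum^{D/4}_{a=1}\eta(a)a=\sum^{D/8}_{a=1}\Bigl(\eta(a)a+\eta\bigl(\tfrac D4-a\bigr)\bigl(\tfrac D4-a\bigr)\Bigr)\]
and prove, for odd $a$,
\[\eta\bigl(\tfrac D4-a\bigr)=\pm\zeta_4\Bigl(\tfrac{-1}{a}\Bigr)\eta(a).\]
Here $D/4=2^nd$, and the sign $\pm$ is fixed by $\chi_n(2^nd-1)=\pm\zeta_4$.

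\textbf{Step 2: the $\chi_n$-factor.} For odd $a$ with inverse $a'$ modulo $2^{n+2}$,
\[\frac{\chi_n(2^nd-a)}{\chi_n(a)\chi_n(2^nd-1)}=\chi_n\bigl(1+2^nd(1-a')\bigr)\cdot(\text{a correction of order }2^{2n}),\]
as in Lemma \ref{lem2-1}(2). Since $n\geq2$, we have $2n\geq n+2$, so the correction is $\equiv1 \bmod 2^{n+2}$ and can be dropped. Also $d$ is odd and $1-a'\equiv 1-a\bmod 4$. Now $\chi_n(1+2^n\cdot 2)=-1$, while $\chi_n(1+2^n\cdot 4)=1$. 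Hence $\chi_n(1+2^nd(1-a'))=\left(\frac{-1}{a}\right)$, as in Lemma \ref{lem2-1}.

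\textbf{Step 3: the $\psi$-factor.} We need $\psi(2^nd-a)=\psi(a)$ for $\psi=\psi_d$ or $\psi_d^2$. The odd part of the conductor divides $d$, and modulo $d$ we have $2^nd-a\equiv -a$. The character is even, so this part contributes the same value at $2^nd-a$ as at $a$. The $2$-part of the conductor of $\psi_d$ divides $4$, and $2^nd-a\equiv -a\bmod 4$ because $n\geq2$; evenness of $\psi$ handles this as well. For $\psi_d^2$ the identity is trivial on integers prime to $d$. This is the step needing the most care: one must check that evenness of $\psi$ gives $\psi(-a)=\psi(a)$ for the full character, not separately for its local components. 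That holds because $\psi(-a)$ depends only on $-a \bmod 4d$.

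\textbf{Step 4: conclusion.} Substituting the reflection formula, we get
\[\eta\bigl(\tfrac D4-a\bigr)\bigl(\tfrac D4-a\bigr)\equiv \mp\zeta_4\Bigl(\tfrac{-1}{a}\Bigr)\eta(a)a \ ({\rm mod}\ 4),\]
since $D/4=2^nd\equiv0\bmod4$ for $n\geq2$ and the sign flips from $-a$. The summand becomes $\eta(a)a\bigl(1\mp\zeta_4(\frac{-1}{a})\bigr)$.

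For $a\equiv1\bmod4$ this factor is $1\mp\zeta_4$. For $a\equiv3\bmod4$ it is $1\pm\zeta_4=(1\mp\zeta_4)(\pm\zeta_4)$, using $\zeta_4^2=-1$. In both cases the factor equals $(1\mp\zeta_4)\varepsilon_a$, which gives the claimed congruence.
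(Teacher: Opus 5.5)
Your proof is correct and follows the paper's route. You reduce via Lemma \ref{sec3-3-2} to $\sum_{a=1}^{D/4}\eta(a)a$ modulo $4$, then fold the sum at $D/4$ using the reflection formula $\eta\left(\frac{D}{4}-a\right)=\pm\zeta_4\left(\frac{-1}{a}\right)\eta(a)$, exactly as in Proposition \ref{mod-4}. The only cosmetic difference is in how the reflection formula is obtained: you verify the $\chi_n$-reflection at $2^nd$ directly by redoing the computation of Lemma \ref{lem2-1}(2) and you spell out why the $\psi$-factor is unchanged, whereas the paper passes through $\chi_n(2^nd-a)=\left(\frac{-1}{d}\right)\chi_n(2^n-a)$ and leaves the $\psi$-part implicit.
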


\begin{proof}
Observe that
\[\sum^{\frac{D}{4}}_{a=1}\eta(a)a=\sum^{\frac{D}{8}}_{a=1}\eta(a)a+\sum^{\frac{D}{8}}_{a=1}\eta\left(\frac{D}{4}-a\right)\cdot\left(\frac{D}{4}-a\right).\]
Note that
\[\chi_n(2^nd-a)=\chi_n(2^n-a)\cdot\left(\frac{-1}{d}\right),\]
and Lemma \ref{lem2-1}, we have $\eta\left(\frac{D}{4}-a\right)=\eta\left(\frac{D}{4}-1\right)\cdot\left(\frac{-1}{a}\right)\cdot\eta(a)$. Applying the same computation as in Proposition \ref{mod-4}, we obtain
\[\sum^{\frac{D}{4}}_{a=1}\eta(a)a\equiv (1\mp\zeta_4)\sum^{\frac{D}{8}}_{a=1}\eta(a)a\cdot \varepsilon_a \, ({\rm mod}\, 4).\]
Then, by Lemma \ref{sec3-3-2}, the result follows. 
\end{proof}

We now prove Theorem \ref{sec3-3-1}.

\begin{proof}[Proof of Theorem \ref{sec3-3-1}.]
We claim that the following congruence holds:
\begin{equation}\label{sec3-3-1-f1}
  L^{(D)}(\chi_n,1-m)+L(\chi_n\psi_d,1-m)\equiv 0\, ({\rm mod}\, 2(1+\zeta_4)).
\end{equation}
Assuming the claim, we derive the theorem.
By definition, we have
\[L^{(D)}(\chi_n,1-m)=L(\chi_n,1-m)\cdot \prod_{p\mid d}\left(1-\chi_n(p)p^{m-1}\right).\]
From Proposition \ref{sec3-5}, we obtain
\[{\rm ord}_2(L^{(D)}(\chi_n,1-m))=1-2^{1-n}+\sum_{p\mid d}{\rm ord}_2\left(\left(1-\chi_n(p)p^{m-1}\right)\right).\]
To compute the $2$-adic valuations of the terms in the sum above, note that
\[1-\chi_n(p)p^{m-1}\equiv 1-\chi_n(p) \, ({\rm mod}\, 2).\]
It is known that, for $n>f_p$, the order of the Frobenius element associated to $p$ in ${\rm Gal}(\BQ_n/\BQ)$ is equal to $2^{n-f_p}$. Therefore, for \(n>f_p+1\) we have $${\rm ord}_2(1-\chi_n(p))=2^{1-n+f_p}<1.$$ 
The congruence above together with the strong triangle inequality for \({\rm ord}_2\) imply that for such \(n\)
\[{\rm ord}_2(1-\chi_n(p)p^{m-1})={\rm ord}_2(1-\chi_n(p))=2^{1-n+f_p}.\]
So, if $n>f_p+1$ for all $p\mid d$, we have
\[{\rm ord}_2 \bigg( \prod_{p\mid d}\left(1-\chi_n(p)p^{m-1}\right) \bigg)=2^{1-n}\cdot \sum_{p\mid d}2^{f_p}.\]
Therefore, 
\[{\rm ord}_2(L^{(D)}(\chi_n,1-m))=1+2^{1-n}\cdot (-1+\sum_{p\mid d}2^{f_p}).\]
Now, invoking the congruence \eqref{sec3-3-1-f1}, when $n$ is sufficiently large, say there exists an integer $n_d$, such that for all $n\geq n_d$,
\[1+2^{1-n}\cdot (-1+\sum_{p\mid d}2^{f_p})<\frac{3}{2},\]
By the claim \eqref{sec3-3-1-f1} and the strong triangle inequality for \({\rm ord}_2\) again, we have
\[{\rm ord}_2(L(\chi_n\psi_d,1-m))={\rm ord}_2(L^{(D)}(\chi_n,1-m))=1+2^{1-n}\cdot (-1+\sum_{p\mid d}2^{f_p}).\]

Now, we prove the claim. Recall from Lemma \ref{imprimitive} that
\[L^{(D)}(\chi_n,1-m)=L(\chi_n\psi^2_d,1-m)
.\]
Apply Lemma \ref{sec3-3-3} to \(\eta=\chi_n\psi_d^2\) and \(\eta=\chi_n\psi_d\) respectively, we get
\[L(\chi_n\psi^2_d,1-m)
\equiv(1\mp\zeta_4)\sum^{D/8}_{a=1}\chi_n(a)\psi_d(a)^2a\varepsilon_a\, ({\rm mod}\, 4),\]
and
\[L(\chi_n\psi_d,1-m)\equiv(1\mp\zeta_4)\sum^{D/8}_{a=1}\chi_n(a)\psi_d(a)a\varepsilon_a\, ({\rm mod}\, 4).\]
Therefore, taking the sum of the congruences above we obtain
\[L^{(D)}(\chi_n,1-m)+L(\chi_n\psi_d,1-m)\equiv (1\mp\zeta_4)\sum^{D/8}_{a=1}\chi_n(a)\psi_d(a)\cdot a\varepsilon_a\cdot(1+\psi_d(a))\, ({\rm mod}\, 4).\]
Note that $2$ divides $1+\psi_d(a)$. Thus, 
the right hand side is divisible by $2(1+\zeta_4)$, and the claim follows.
\end{proof}

For a real number $x$, we define $\lceil x \rceil={\rm min}\{n\in \mathbb{Z}:n\ge x\}$. Concerning the number $n_d$, we have the following result.

\begin{cor}\label{nd}
Assume that $d>1$. Let $f=\max\limits_{p|d} \{f_p\}$. Then we can take
\[n_d=\lceil f+\log_2(\tau(d))+2 \rceil, \]
where $\tau(d)$ is the number of distinct prime divisors of $d$. 
\end{cor}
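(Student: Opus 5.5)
The plan is to retrace the proof of Theorem \ref{sec3-3-1} and record exactly which inequalities on $n$ were used. Two conditions appeared there. The first is that $n>f_p+1$ for every $p\mid d$; this makes ${\rm ord}_2(1-\chi_n(p)p^{m-1})=2^{1-n+f_p}$. The second is that the resulting valuation
\[1+2^{1-n}\Bigl(-1+\sum_{p\mid d}2^{f_p}\Bigr)\]
is strictly less than $3/2={\rm ord}_2(2(1+\zeta_4))$. Together with the congruence \eqref{sec3-3-1-f1} and the strong triangle inequality, this forces ${\rm ord}_2(L(\chi_n\psi_d,1-m))={\rm ord}_2(L^{(D)}(\chi_n,1-m))$. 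We also need $n\ge 2$ so that Lemma \ref{sec3-3-3} applies. The task is therefore to show that $n\ge n_d:=\lceil f+\log_2\tau(d)+2\rceil$ implies all three conditions.

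First, I would check the condition $n>f_p+1$. Since $d>1$ we have $\tau(d)\ge1$, so $\log_2\tau(d)\ge 0$ and $n_d\ge f+2>f_p+1$ for every $p\mid d$. Also $f_p\ge 0$: indeed $p^*\equiv 1 \pmod 4$, so $f_p\ge 0$, and the case $p^*\equiv 1\pmod 8$ only makes $f_p$ larger. Hence $n_d\ge 2$, which covers the requirement $n\ge 2$.

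Next comes the main inequality. We need
\[2^{1-n}\Bigl(-1+\sum_{p\mid d}2^{f_p}\Bigr)<\tfrac12,\]
which is equivalent to $\sum_{p\mid d}2^{f_p}-1<2^{n-2}$. Bounding the sum crudely gives $\sum_{p\mid d}2^{f_p}\le \tau(d)2^{f}$. Since $n\ge f+\log_2\tau(d)+2$, we get $2^{n-2}\ge 2^f\tau(d)$, and therefore
\[\sum_{p\mid d}2^{f_p}-1\le \tau(d)2^f-1<2^{n-2}.\]
The $-1$ is exactly what makes the inequality strict, so the ceiling causes no trouble.

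Finally, I would rerun the last step of the proof of Theorem \ref{sec3-3-1} with these inequalities in hand and obtain the formula for every $n\ge n_d$. I expect no genuine obstacle. The only delicate point is to verify that the proof of Theorem \ref{sec3-3-1} uses no hidden largeness assumption on $n$ beyond the three conditions identified at the start. In particular, the claim \eqref{sec3-3-1-f1} and Proposition \ref{sec3-5} hold for all $n\ge2$ (respectively all $n\ge1$).
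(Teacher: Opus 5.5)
Your proposal is correct and follows the paper's proof. The paper likewise reduces the statement to the inequality $1+2^{1-n}\bigl(-1+\sum_{p\mid d}2^{f_p}\bigr)<\tfrac32$ from the proof of Theorem~\ref{sec3-3-1}, and bounds $\sum_{p\mid d}2^{f_p}\le\tau(d)2^f$; your extra checks that $n_d\ge f+2>f_p+1$ and $n_d\ge 2$ (needed for the valuation of $1-\chi_n(p)p^{m-1}$ and for Lemma~\ref{sec3-3-3}) are left implicit there, so your version is slightly more complete.
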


\begin{proof}
From the proof of Theorem \ref{sec3-3-1}, we require that for $n\ge n_d$, the inequality 
\[1+2^{1-n}\cdot\left(-1+\sum\limits_{p|d}2^{f_p}\right)<\frac{3}{2}\]
holds.
Since 
\[\sum\limits_{p|d}2^{f_p} \le \tau(d)\cdot 2^{f},\]
therefore, we can take
\[n_d=\lceil f+\log_2(\tau(d))+2 \rceil. \]
\end{proof}

\subsection{$2$-adic divisibility of Dedekind zeta-values}

Let $\zeta(s)$ denote the Riemann zeta function. 

\begin{lem}\label{zeta-1}
For each positive even integer $m$, we have
\[{\rm ord}_2(\zeta(1-m))=-1-{\rm ord}_2(m).\]
\end{lem}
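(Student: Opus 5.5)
We prove Lemma \ref{zeta-1} from the classical formula $\zeta(1-m)=-\frac{B_m}{m}$ for even $m\ge 2$, which is Lemma \ref{bernoulli} applied to the trivial character. It then suffices to show ${\rm ord}_2(B_m)=-1$, since then
\[{\rm ord}_2(\zeta(1-m))={\rm ord}_2(B_m)-{\rm ord}_2(m)=-1-{\rm ord}_2(m).\]

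The value ${\rm ord}_2(B_m)$ comes from the von Staudt--Clausen theorem, already used in the proof of Proposition \ref{sec3-5}. For even $m\ge 2$ it states
\[B_m+\sum_{(p-1)\mid m}\frac{1}{p}\in\BZ,\]
the sum running over primes $p$ with $(p-1)\mid m$. Since $m$ is even, $p=2$ always appears, contributing $\frac12$. Every other term $\frac1p$ with $p$ odd is a $2$-adic unit, hence $2$-adically integral. Therefore $B_m\equiv -\frac{1}{2}$ modulo $\BZ_2$, i.e. $B_m+\frac12\in\BZ_2$. This forces ${\rm ord}_2(B_m)=-1$.

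The case $m=2$ gives a quick sanity check: $B_2=\frac16$ and $\zeta(-1)=-\frac1{12}$. Here ${\rm ord}_2(-\frac{1}{12})=-2=-1-{\rm ord}_2(2)$, as claimed.

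There is no real obstacle. The only points needing care are the sign and normalization conventions in $\zeta(1-m)=-B_m/m$ (here $m\ge 2$, so $B_1$ plays no role) and the observation that $2-1=1$ divides every $m$, so the prime $2$ always occurs in the von Staudt--Clausen sum.
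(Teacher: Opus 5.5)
Your proposal is correct and follows the same route as the paper: write $\zeta(1-m)=-B_m/m$, then use von Staudt--Clausen to get ${\rm ord}_2(B_m)=-1$. You spell out the von Staudt--Clausen step (the prime $2$ always contributes $\tfrac12$, and all other terms are $2$-adic units) in more detail than the paper, which simply cites the theorem.
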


\begin{proof}
Note that $\zeta(1-m)=-\frac{B_m}{m}$. By the von Staudt-Clausen theorem, we obtain ${\rm ord}_2(B_m)=-1$. Therefore,  the result follows.
\end{proof}

Let $d>1$ be a positive odd square-free integer. Let $K=\BQ(\sqrt{d})$. For each non-negative integer $n$, let $K_n$ denote the unique subfield in $K_{cyc}$ such that $[K_n:K]=2^n$. Note that $K_n=\BQ_n(\sqrt{d})$, 
where $\BQ_n$ denotes the $n$-th layer of the cyclotomic $\BZ_2$-extension of $\BQ$. 

\begin{thm}\label{zeta-2}
Let $m$ be a positive even integer and let $n$ be a non-negative integer.
\begin{enumerate}
  \item [(1)]We have
  \[{\rm ord}_2(\zeta_{\BQ_n}(1-m))=2^n-n-2-{\rm ord}_2(m).\]
  \item [(2)]For all $n\geq n_d$, we have
  \[{\rm ord}_2(\zeta_{K_n}(1-m))=2\cdot 2^n+\left(-2+\sum_{p\mid d}2^{f_p}\right)\cdot n+\nu'_{m,d},\]
  where
 \[\begin{aligned}\nu'_{m,d}=&\ord_2\left(\frac{L(\psi_d,1-m)}{4m}\right)-2^{n_d}+n_d\left(1-\sum_{p\mid d}2^{f_p}\right)\\
 &+\sum^{n_d}_{k=1}2^{k-1}\cdot{\rm ord}_2(L(\chi_k\psi_d,1-m)).\end{aligned}\] 
\end{enumerate}

\end{thm}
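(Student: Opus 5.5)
The plan is to factor the Dedekind zeta functions into Dirichlet $L$-functions and sum the $2$-adic valuations computed in Section \ref{sec3}.

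\textbf{Factorization.} Since $\mathrm{Gal}(\BQ_n/\BQ)$ is cyclic of order $2^n$, we have
\[\zeta_{\BQ_n}(s)=\zeta(s)\prod_{k=1}^{n}\prod_{\chi}L(\chi,s),\]
where the inner product runs over the $2^{k-1}$ primitive characters of $\mathrm{Gal}(\BQ_k/\BQ)$, that is, the Galois conjugates of $\chi_k$.

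Similarly $K_n=\BQ_n(\sqrt d)$ has Galois group $\mathrm{Gal}(\BQ_n/\BQ)\times\BZ/2$. Hence
\[\zeta_{K_n}(s)=\zeta_{\BQ_n}(s)\,L(\psi_d,s)\prod_{k=1}^n\prod_{\chi}L(\chi\psi_d,s).\]
Because $d$ is odd, the character $\chi\psi_d$ is primitive of conductor $2^{k+2}d$ up to the harmless $2$-part. The only point to check here is that $\psi_d$ has conductor $d$ or $4d$, and that multiplying by $\chi_k$, of conductor $2^{k+2}$ with $k\ge1$, gives conductor $2^{k+2}d$.

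Galois conjugation preserves $\ord_2$, since $\ord_2$ is normalized on $\overline{\BQ}$ and the value fields are totally ramified at $2$. So each inner product contributes $2^{k-1}\,\ord_2(L(\chi_k,1-m))$, respectively $2^{k-1}\,\ord_2(L(\chi_k\psi_d,1-m))$.

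\textbf{Part (1).} By Lemma \ref{zeta-1} and Proposition \ref{sec3-5},
\[\ord_2(\zeta_{\BQ_n}(1-m))=-1-\ord_2(m)+\sum_{k=1}^n 2^{k-1}(1-2^{1-k}).\]
The sum equals $\sum_{k=1}^n(2^{k-1}-1)=2^n-1-n$. This gives $2^n-n-2-\ord_2(m)$, valid for all $n\ge0$.

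\textbf{Part (2).} I split the sum over $k$ at $n_d$. For $k\le n_d$ I keep the terms $2^{k-1}\ord_2(L(\chi_k\psi_d,1-m))$ as they are. For $n_d<k\le n$ I use Theorem \ref{sec3-3-1}, which gives
\[2^{k-1}\Big(1+2^{1-k}(S-1)\Big)=2^{k-1}+S-1,\qquad\text{where } S=\sum_{p\mid d}2^{f_p}.\]
Summing over $n_d<k\le n$ yields
\[(2^n-2^{n_d})+(n-n_d)(S-1).\]
Adding part (1) and $\ord_2(L(\psi_d,1-m))$ gives the total
\[\begin{aligned}
&2^n-n-2-\ord_2(m)+\ord_2(L(\psi_d,1-m))+\sum_{k=1}^{n_d}2^{k-1}\ord_2(L(\chi_k\psi_d,1-m))\\
&\quad+2^n-2^{n_d}+(n-n_d)(S-1).
\end{aligned}\]
The coefficient of $n$ is $S-2$ and the coefficient of $2^n$ is $2$. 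The constant is
\[\ord_2(L(\psi_d,1-m))-2-\ord_2(m)-2^{n_d}+n_d(1-S)+\sum_{k\le n_d}2^{k-1}\ord_2(L(\chi_k\psi_d,1-m)).\]
This matches $\nu'_{m,d}$, since $-2-\ord_2(m)=\ord_2\big(\tfrac{1}{4m}\big)$.

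\textbf{Main obstacle.} The main point is bookkeeping rather than mathematics: confirming that the characters $\chi\psi_d$ appearing in the factorization are exactly the Galois conjugates of $\chi_k\psi_d$, all of whose $L$-values have the same valuation. This follows from $L(\chi^\sigma,1-m)=\sigma(L(\chi,1-m))$, because $-B_{m,\chi}/m$ is Galois-equivariant.
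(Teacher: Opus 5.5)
Your proposal is correct and follows the paper's route. You factor $\zeta_{\BQ_n}$ and $\zeta_{K_n}=\zeta_{\BQ_n}\,L(\psi_d,s)\prod_{k,\chi}L(\chi\psi_d,s)$ into Dirichlet $L$-functions, apply Lemma~\ref{zeta-1}, Proposition~\ref{sec3-5}, and Theorem~\ref{sec3-3-1} for $k>n_d$, and then sum. Your explicit bookkeeping fills in what the paper dismisses as ``evaluating the remaining terms directly'': you split at $n_d$ and check that the constant equals $\nu'_{m,d}$ using $-2-\ord_2(m)=\ord_2\bigl(\tfrac{1}{4m}\bigr)$.
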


\begin{proof}
For (1), from the formula
\[\zeta_{\BQ_n}(1-m)=\prod^n_{\ell=0}\prod_{\chi_\ell}L(\chi_\ell,1-m),\]
where the second product is taken over 
all primitive characters $\chi_\ell$ of ${\rm Gal}(\BQ_\ell/\BQ)$.
From Lemma \ref{zeta-1}, ${\rm ord}_2(\zeta(1-m))=-1-{\rm ord}_2(m)$. We apply Proposition \ref{sec3-5} to obtain
\[{\rm ord}_2(\zeta_{\BQ_n}(1-m))=2^n-n-2-{\rm ord}_2(m).\]
This proves (1).

Since $K_n=\BQ_n(\sqrt{d})$, we have the formula
\[\zeta_{K_n}(1-m)=\zeta_{\BQ_n}(1-m)\cdot\prod^n_{\ell=0}\prod_{\chi_\ell}L(\chi_\ell\psi_d,1-m),\]
where the second product is over all primitive characters $\chi_\ell$ of ${\rm Gal}(\BQ_\ell/\BQ)$. From Theorem \ref{sec3-3-1}, for $n\geq n_d$, we obtain
\[{\rm ord}_2(L(\chi_n\psi_d,1-m))=1+2^{1-n}\left(-1+\sum_{p\mid d}2^{f_p}\right).\]
Combining this with the formula from part (1), and evaluating 
the remaining terms directly, we conclude that the formula in part (2) follows.  
\end{proof}

\section{Structure of the $K$-groups}\label{sec4}

For a number field $\CF$ and a positive even integer $m$, we define $w_m(\CF)$ to be the largest integer $\fN$ such that ${\rm Gal}(\CF(\zeta_\fN)/\CF)$ has exponent dividing $m$. We will use the following theorem to determine the structure of the $K$-groups in this section. 

\begin{thm}[\cite{RW}]\label{4-1}
Let $F$ be a totally real abelian number field, for any positive even integer $m$, we have
\[{\rm ord}_2\left(|K_{2m-2}\CO_F|\right)=\begin{cases}{\rm ord}_2(w_m(F)\zeta_F(1-m))\quad &\textrm{if ${\rm ord}_2(m)=1$};\\ {\rm ord}_2(w_m(F)\zeta_F(1-m))-[F:\BQ]\quad &\textrm{otherwise.}
\end{cases}\]
\end{thm}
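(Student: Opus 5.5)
This theorem is cited from \cite{RW}, so the plan is to extract it from known results rather than prove it from first principles. The order is: main conjecture, then the $2$-adic Lichtenbaum formula, then the archimedean correction.

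\textbf{Step 1.} For a totally real abelian field $F$, the $2$-adic Iwasawa main conjecture holds by Wiles \cite{Wiles}. We would invoke the form stated for $p=2$, with the Kolster/Rognes--Weibel refinements. It expresses the characteristic ideal of the relevant Iwasawa module over $F(\mu_{2^\infty})^+$ through $2$-adic $L$-functions.

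\textbf{Step 2.} Combine this with the Voevodsky--Rost results (Bloch--Kato, giving Quillen--Lichtenbaum at $p=2$). These identify $K_{2m-2}\CO_F\otimes\BZ_2$ with étale cohomology $H^2_{et}(\CO_F[1/2],\BZ_2(m))$. This holds up to the contribution of real places, which Rognes--Weibel control via the comparison between étale $K$-theory and $K$-theory of $\CO_F[1/2]$.

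Descent from the main conjecture then gives the order of $H^2_{et}(\CO_F[1/2],\BZ_2(m))$ divided by the order of $H^1_{et}$, whose torsion is $\BZ_2(m)^{G_F}$ of order the $2$-part of $w_m(F)$. This ratio equals $|\zeta_F(1-m)|_2^{-1}$, up to the Euler factors at $2$, which are units since $1-2^{m-1}$ is odd. Hence
\[{\rm ord}_2\bigl(|H^2_{et}|\bigr)={\rm ord}_2\bigl(w_m(F)\zeta_F(1-m)\bigr).\]

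\textbf{Step 3 (the main obstacle).} Pass from étale cohomology to $K_{2m-2}\CO_F(2)$ at the real places. Rognes--Weibel show that the $2$-adic étale-to-$K$ comparison differs by a factor $(\BZ/2)^{r_1}$ exactly when $m\equiv 0 \pmod 4$; this reflects $H^*(\BR,\BZ_2(m))$ and the signature of $\BZ(m)$. When ${\rm ord}_2(m)=1$ the groups coincide; when $4\mid m$ one loses $2^{r_1}=2^{[F:\BQ]}$ since $F$ is totally real. Here I would cite their theorem directly rather than reprove the $\mathbb{Z}/2$-Tate cohomology computation, checking that their convention for $w_m(F)$ matches the one given in this paper. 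That yields both cases of the displayed formula.
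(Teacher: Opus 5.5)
Your proposal matches the paper, which gives no argument of its own and simply imports this statement from Rognes--Weibel (resting on Wiles's $2$-adic main conjecture for abelian fields); your outline of how their result is assembled is accurate: first the étale formula $|H^2|/|H^1|\sim_2 \zeta_F(1-m)$, then the loss of $(\BZ/2)^{[F:\BQ]}$ at the real places exactly when $4\mid m$. One small slip: $H^1_{et}(\CO_F[1/2],\BZ_2(m))$ is finite here (it has rank $0$ for totally real $F$ and even $m$) and equals its torsion $H^0(F,\BQ_2/\BZ_2(m))$, of order the $2$-part of $w_m(F)$, whereas the group $\BZ_2(m)^{G_F}$ you wrote is zero.
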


Recall that $K=\BQ(\sqrt{d})$, where $d\geq2$ is a positive square free odd integer.

\begin{lem}\label{4-2}
Let $F=\BQ_n$ or $K_n$. For all nonnegative integer $n$, we have
\[{\rm ord}_2(w_m(K_n))={\rm ord}_2(w_m(\BQ_n))=n+2+{\rm ord}_2(m).\]
\end{lem}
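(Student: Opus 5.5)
The formula will come from the explicit description of $w_m(F)$: it equals $\prod_\ell \ell^{a_\ell}$, where $\ell^{a_\ell}$ is the largest power of $\ell$ for which ${\rm Gal}(F(\zeta_{\ell^{a_\ell}})/F)$ has exponent dividing $m$. So ${\rm ord}_2(w_m(F))$ is the largest $a$ such that ${\rm Gal}(F(\zeta_{2^a})/F)$ has exponent dividing $m$. I will compute this for $F=\BQ_n$ first and then show that adjoining $\sqrt{d}$ changes nothing.

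For $F=\BQ_n$, note that $\BQ_n$ is the maximal real subfield of $\BQ(\zeta_{2^{n+2}})$, because $\BQ_n\subset \BQ(\zeta_{2^{n+2}})^+$ and both have degree $2^n$. Hence for $a\geq n+2$ the group ${\rm Gal}(\BQ_n(\zeta_{2^a})/\BQ_n)$ is the subgroup $H_a$ of $(\BZ/2^a)^\times$ fixing $\BQ_n$. This subgroup is the preimage of $\{\pm1\}$ under reduction mod $2^{n+2}$, so $H_a=\{\pm1\}\times(1+2^{n+2}\BZ/2^a)$. The factor $1+2^{n+2}\BZ/2^a$ is cyclic of order $2^{a-n-2}$, so the exponent of $H_a$ is $\max(2,2^{a-n-2})$. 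This exponent divides $m$ exactly when $2^{a-n-2}\mid 2^{{\rm ord}_2(m)}$ (recall $m$ is even). The largest such $a$ is $a=n+2+{\rm ord}_2(m)$, which is also $\geq n+2$, so the restriction to $a\geq n+2$ is harmless. For $a<n+2$ the field $\BQ_n(\zeta_{2^a})$ is contained in $\BQ_n(\zeta_{2^{n+2}})$, so these smaller $a$ are dominated by the case already treated.

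For $F=K_n=\BQ_n(\sqrt{d})$ with $d>1$ odd, the key point is that $\sqrt{d}\notin \BQ(\zeta_{2^\infty})$. Indeed, the quadratic subfields of $\BQ(\zeta_{2^a})$ are only $\BQ(\sqrt{-1}),\BQ(\sqrt{2}),\BQ(\sqrt{-2})$, and $\BQ(\sqrt{d})$ is ramified at an odd prime. Therefore $K_n\cap \BQ_n(\zeta_{2^a})=\BQ_n$, and restriction gives an isomorphism ${\rm Gal}(K_n(\zeta_{2^a})/K_n)\cong{\rm Gal}(\BQ_n(\zeta_{2^a})/\BQ_n)$. The same exponent computation then applies, giving ${\rm ord}_2(w_m(K_n))={\rm ord}_2(w_m(\BQ_n))$.

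The main point needing care is the determination of the Galois group $H_a$ and of its exponent, in particular the identification of $\BQ_n$ with $\BQ(\zeta_{2^{n+2}})^+$ and the cyclicity of $1+2^{n+2}\BZ_2$ modulo $2^a$. Both are standard, so I do not expect a real obstacle; the remaining steps are routine.
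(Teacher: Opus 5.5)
Your proposal is correct and takes the same route as the paper. The paper's one-line proof simply asserts ${\rm Gal}(F(\zeta_{2^{n+2+j}})/F)\simeq \BZ/2\BZ\oplus\BZ/2^j\BZ$ and reads off the exponent; your computation of $H_a=\{\pm1\}\times(1+2^{n+2}\BZ/2^a\BZ)$ via $\BQ_n=\BQ(\zeta_{2^{n+2}})^+$, together with the linear disjointness of $K_n=\BQ_n(\sqrt d)$ from $\BQ_n(\zeta_{2^a})$, supplies the justification of that isomorphism that the paper leaves implicit.
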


\begin{proof}
By assumption, for each positive integer $j$, it is shown that 
\[{\rm Gal}(F(\zeta_{2^{n+2+j}})/F)\simeq \BZ/2\BZ\oplus \BZ/2^j\BZ.\]
By the definition of $w_m(F)$, we obtain $${\rm ord}_2(w_m(F))=n+2+{\rm ord}_2(m).$$
Hence, the lemma follows.
\end{proof}

Recall the number $n_d$ defined in Corollary \ref{nd}. We set 
\[n_K:=\lceil f+\log_2(\tau(d))+2 \rceil.\] 
Now we can prove the main theorem.

\begin{proof}[Proof of Theorem \ref{main-thm1}]
Let $F=\BQ$ or $K$. From Theorems \ref{zeta-2}, \ref{4-1}, and Lemma \ref{4-2}, for all $n\geq n_K$ (resp. $n\geq1$) when $F=K$ (resp. $F=\BQ$), we have
\[{\rm ord}_2\left(|K_{2m-2}\CO_{F_n}(2)|\right)=\mu\cdot 2^n+\lambda\cdot n+\nu,\]
where 
\begin{itemize}
  \item $\mu=0$ if $4\mid m$; $\mu=[F:\BQ]$ if ${\rm ord}_2(m)=1$,
  \item $\lambda=1-[F:\BQ]+\sum_{p\mid d}2^{f_p}$,
  \item $\nu=\nu'_{m,d}+2+{\rm ord}_2(m)$.
\end{itemize}
Here, $\nu'_{m,d}$ is the number defined in Theorem \ref{zeta-2}. The theorem now follows. 
\end{proof}

From now until the end of this section, we assume $m=2$. We will prove a refinement of $n_d$ in section \S\ref{sec5}.

\begin{prop}\label{4-3}
Assume that $m=2$. For $d\geq2$, we can take
\[n_d={\rm max}_{p\mid d}\{f_p\}+2.\]
Namely, for any $n\geq n_d$, we have
\[{\rm ord}_2(L(\chi_n\psi_d,-1))=1+2^{1-n}\cdot\left(-1+\sum_{p\mid d}2^{f_p}\right)\]
\end{prop}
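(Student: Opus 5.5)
The plan is to sharpen the argument in the proof of Theorem \ref{sec3-3-1} when $m=2$, replacing the crude bound $3/2$ by an induction on the number $\tau(d)$ of prime divisors of $d$, in the spirit of \cite{DL}. Write $v_n(d)=1+2^{1-n}\bigl(-1+\sum_{p\mid d}2^{f_p}\bigr)$. For $n\geq \max_{p\mid d}f_p+2$ each Euler factor satisfies ${\rm ord}_2(1-\chi_n(p)p)=2^{1-n+f_p}\leq \tfrac12<1$. However, the sum $\sum_{p\mid d}2^{1-n+f_p}$ can be large, so the single congruence \eqref{sec3-3-1-f1} modulo $2(1+\zeta_4)$ is not enough. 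Instead I would peel off one prime at a time.

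Write $d=d'p$ with $p$ a prime, and use induction on $\tau(d)$. The base case $d=1$ is Proposition \ref{d-1}, where $v_n(1)=1-2^{1-n}$.

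For the inductive step, set $\rho=\chi_n\psi_{d'}$, which is an even character of conductor dividing $D=2^{n+2}d$, and compare $L(\rho\psi_p,-1)$ with the imprimitive value
\[L^{(p)}(\rho,-1)=(1-\rho(p)p)\,L(\rho,-1)=L(\rho\psi_p^2,-1),\]
using Lemma \ref{imprimitive}. By the induction hypothesis together with the Euler factor computation, for $n\ge \max f_p+2$ we have ${\rm ord}_2L^{(p)}(\rho,-1)=v_n(d')+2^{1-n+f_p}=v_n(d)$. It therefore suffices to prove
\[
L(\rho\psi_p,-1)+L(\rho\psi_p^2,-1)\equiv 0 \pmod{2^{\,v_n(d)+\delta}}
\]
for some $\delta>0$, and then conclude by the strong triangle inequality, exactly as in Theorem \ref{sec3-3-1}.

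To obtain this congruence I would use the exact formula of Lemma \ref{no-D}, not just its mod $4$ reduction. It gives
\[
L(\rho\psi_p,-1)+L(\rho\psi_p^2,-1)=\tfrac12\sum_{a\le D/2}\rho(a)\,a\,\psi_p(a)\bigl(1+\psi_p(a)\bigr)=\sum_{a\le D/2,\ \psi_p(a)=1}\rho(a)\,a .
\]
This is a partial character sum of $\rho$ over the quadratic residues modulo $p$. I would then run the folding of Proposition \ref{d-1} on this restricted sum, pairing $a$ with $D/2^k-a$ as in Lemma \ref{sec3-1}. The pairing preserves the residue condition up to the sign $\psi_p(-1)$, and I would track this sign via $p^*$. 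The aim is to extract the product $\prod_{k}(1-\zeta_{2^k})$, which carries the $1-2^{1-n}$ part, together with an extra factor $2$ coming from the residue restriction. I would also pick up factors $(1-\rho(p))$-type contributions, by grouping the residues into cosets of $\langle p\rangle$ acting on $(\BZ/2^{n+2})^\times$. The orbits of the Frobenius at each $q\mid d$ have length $2^{n-f_q}$, and this grouping should give the Euler-factor valuations $2^{1-n+f_q}$.

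I expect this last step to be the main obstacle: proving that the residue-restricted sum has valuation strictly larger than $v_n(d)$ uniformly for all $n\ge \max f_p+2$, not merely modulo $2(1+\zeta_4)$. If the orbit decomposition proves unwieldy, I would first try an alternative: apply the mod-$4$ congruence of Lemma \ref{sec3-3-3} inductively at each level $n$, using the distribution relation between $\chi_n$ and $\chi_{n-1}$ to push the residual error into a higher ideal.
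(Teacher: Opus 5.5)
There is a genuine gap: the step you flag as the main obstacle is the whole proof, and the single-prime setup cannot supply it.

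\textbf{Your reduction restates the proposition.} You reduce the inductive step to
\[
L(\rho\psi_p,-1)+L(\rho\psi_p^2,-1)\equiv 0 \pmod{2^{v_n(d)+\delta}} .
\]
This reduction is correct but does no work. All these values lie in $\BQ(\zeta_{2^n})$. Its completion at the unique prime above $2$ is totally ramified with residue field $\BF_2$. So once $x=L(\rho\psi_p^2,-1)$ has valuation exactly $v_n(d)$ (induction hypothesis plus the Euler factor), we get ${\rm ord}_2(x+y)>v_n(d)$ if and only if ${\rm ord}_2(y)=v_n(d)$. Your congruence is therefore just the proposition for $d$ again.

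For that step you offer only heuristics: folding, Frobenius orbits, distribution relations. Moreover, the single-prime identity from Lemma~\ref{no-D} exhibits only the factor $2$ from $1+\psi_p(a)$, and the folding of Proposition~\ref{d-1} is accurate only modulo $2$. Nothing in your outline gives a lower bound that grows with $\tau(d)$. Yet at $n=f+2$ with all $f_q=f$ one has $v_n(d)=1-2^{-f-1}+\tau(d)/2$. The step from $d=1$ to $\tau(d)=1$ is fine, since there the congruence modulo $2(1+\zeta_4)$ of Theorem~\ref{sec3-3-1} suffices. The problem starts at $\tau(d)\ge 2$.

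\textbf{The missing idea: peel off all primes at once.} This is how the paper argues (Proposition~\ref{prop5-1}): sum the imprimitive values over every divisor $b\mid d$. By Lemma~\ref{imprimitive},
\[
L^{(D)}(\chi_n\psi_b,-1)=-\frac{1}{2D}\sum_{a=1}^{D}\chi_n(a)\psi_d(a)\psi_{d/b}(a)a^2 ,
\]
and $\sum_{b\mid d}\psi_{d/b}(a)=\prod_{p\mid d}(1+\psi_p(a))$. Lemma~\ref{no-D} and the folding $a\mapsto D/2-a$ then give
\[
\sum_{b\mid d}L^{(D)}(\chi_n\psi_b,-1)=\sum_{a=1}^{D/4}\chi_n(a)\psi_d(a)\Bigl(a-\frac{D}{4}\Bigr)\prod_{p\mid d}\bigl(1+\psi_p(a)\bigr),
\]
which has valuation at least $\tau=\tau(d)$.

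For $n\ge f+2$ and $\tau\ge 2$ one has $v_n(d)\le 1-2^{1-n}+\tau/2<\tau$. Each of the $2^\tau-1$ terms with $b\ne d$ has valuation exactly $v_n(d)$. This follows from the induction hypothesis for $b$ (Proposition~\ref{d-1} for $b=1$) plus ${\rm ord}_2(1-\chi_n(q)\psi_b(q)q)=2^{1-n+f_q}$ for $q\mid d/b$. An odd number of elements of equal valuation, in a field with residue field $\BF_2$, sums to an element of that same valuation. Hence the remaining term $L(\chi_n\psi_d,-1)$ must also have valuation exactly $v_n(d)$.

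The full product $\prod_{p\mid d}(1+\psi_p(a))$ supplies divisibility $2^{\tau}$, which outpaces the growth of $v_n(d)$ in $\tau$. A single factor $1+\psi_p(a)$ cannot.
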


For a finite abelian group $\fA$, we denote by $r_2(\fA)$ the $2$-rank of $\fA$, i.e., the $\BF_2$ dimension of the vector space $\fA/2\fA$.
The following two examples complete the proof of the Corollary \ref{cor1-1}.

\begin{exm}\label{4-4}
By the Theorem \ref{main-thm1}, we have
\[{\rm ord}_2\left(|K_2(\CO_{\BQ_n})|\right)=2^n\qquad\textrm{for all $n\geq0$.}\]
We recall a theorem of Browkin \cite[Theorem 3]{Browkin}. Let $\CF$ be a number field, $r_1$ be the number of real embeddings of $\CF$, $g_2$ the number of distinct prime ideals in $\mathcal{O}_\CF$ above $2$, and $r=r_2(Cl(F)/Cl_2(F))$, where $Cl_2(F)$ is the subgroup generated by the ideal classes of prime ideals in $\mathcal{O}_\CF$ above $2$. Then 
\begin{equation}\label{4-4-f1}
  r_2(K_2\mathcal{O}_\CF)=r_1+g_2-1+r.
\end{equation}
In the case $\CF=\mathbb{Q}_n$, we have $r_1=2^n$, $g_2=1$. Since
\[2^n=r_1+g_2-1\le r_2(K_2\mathcal{O}_{\mathbb{Q}_n})\le {\rm ord}_2(|K_2\mathcal{O}_{\mathbb{Q}_n}(2)|)=2^n,\]
we obtain
\[K_2\mathcal{O}_{\mathbb{Q}_n}(2)\simeq (\mathbb{Z}/2\mathbb{Z})^{2^n}.\]  
\end{exm}

\begin{exm}\label{4-5}
Let $p\equiv \pm3\, ({\rm mod}\, 8)$ be a prime. Note that \cite[Example 6 and case $n=1$ in Proposition 9]{DL}, we have 
\[{\rm ord}_2(L(\psi_{p},-1))=1\]
From \cite[Corollary]{BS}, for $\fF=\BQ(\sqrt{2p})$, we have $K_2\CO_{\fF}(2)\simeq (\BZ/2\BZ)^2$. Since 
$${\rm ord}_2(w_2(\fF))=3$$ 
(see \cite{DL}), the Birch-Tate formula implies
\[{\rm ord}_2(L(\chi_1\psi_p,-1))=1.\]
From Theorem \ref{sec3-3-1} and Proposition \ref{4-3}, we then have
\[{\rm ord}_2(L(\chi_n\psi_p,-1))=1\quad\textrm{for all $n\geq2$.}\]
 Therefore, by the Birch-Tate formula, for the real quadratic number fields $K=\BQ(\sqrt{p})$ or $\BQ(\sqrt{2p})$, we obtain
\[{\rm ord}_2(|K_2\CO_{K_n}|)=2\cdot 2^n.\]
Recalling formula \eqref{4-4-f1} and applying an inequality similar to that in Example \ref{4-4}, we find $r_2(K_2\CO_{K_n})=2^{n+1}$ and
\[K_2\CO_{K_n}(2)\simeq (\BZ/2\BZ)^{2^{n+1}}.\] 
\end{exm}

We conclude this section by proving Example \ref{main-thm2}.

\begin{proof}[Proof of Example \ref{main-thm2}]
From \cite[Proposition 9]{DL}, we have
\[{\rm ord}_2(L(\psi_d,-1))=r,\]
where $d=p_1p_2\cdots p_r$. From Theorem \ref{sec3-3-1} and Proposition \ref{4-3}, we have
\[{\rm ord}_2(L(\chi_n\psi_d,-1))=1+2^{1-n}\cdot(r-1)\quad \textrm{for all $n\geq2$.}\]
Therefore, $\nu'_{2,d}={\rm ord}_2(L(\psi_{2d},-1))-4$. Now applying Theorem \ref{main-thm1}, we obtain 
\[\mu=2,\quad\lambda=r-1,\quad\nu={\rm ord}_2(L(\psi_{2d},-1))-1.\]
The result follows.
\end{proof}

\section{A Refined bound for $n_d$}\label{sec5}

We prove Proposition \ref{4-3} in this section.
From Corollary \ref{nd}, we have seen that for $d>1$, we can take
\[n_d=\lceil f+\log_2(\tau(d))+2 \rceil, \]
where $\tau(d)$ denotes the number of distinct prime divisors of $d$ and
$f=\max\limits_{p|d} \{f_p\}$. 

\begin{prop}\label{prop5-1}
Assume that $m=2$.
For $d>1$, we can take
$n_d=f+2$. Namely, for any $n\geq f+2$, we have
\begin{equation}\label{prop5-1-f1}
  {\rm ord}_2(L(\chi_n\psi_d,-1))=1+2^{1-n}\cdot\left(-1+\sum_{p\mid d}2^{f_p}\right).
\end{equation}
\end{prop}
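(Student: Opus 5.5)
The approach is to replace the crude inequality $1+2^{1-n}(-1+\sum_{p\mid d}2^{f_p})<\frac32$ from Corollary \ref{nd} by an induction on $\tau(d)$, the number of prime divisors of $d$, in the spirit of \cite{DL}. The base case $\tau(d)=1$ is covered by the proof of Theorem \ref{sec3-3-1}: if $d=p$ and $n\ge f_p+2$, then $1+2^{1-n}(2^{f_p}-1)<\frac32$.

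For the inductive step, fix $n\ge f+2$ and $D=2^{n+2}d$, and work in $\BQ_2(\zeta_{2^{n}})$. This field is totally ramified over $\BQ_2$, with uniformizer $\pi=1-\zeta_{2^n}$ of valuation $2^{1-n}$ and residue field $\BF_2$. Consider all factorizations $d=e\cdot e'$ and the characters $\eta_e=\chi_n\psi_e\psi_{e'}^2$.

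\textbf{Step 1 (known terms).} By Lemma \ref{imprimitive}, $L(\eta_e,-1)=L(\chi_n\psi_e,-1)\prod_{p\mid e'}(1-\chi_n(p)\psi_e(p)p)$. For $e\neq 1,d$, the induction hypothesis applies to $e$ because $n\ge f+2\ge \max_{p\mid e}f_p+2$. Combining it with ${\rm ord}_2(1-\chi_n(p)\psi_e(p)p)=2^{1-n+f_p}$, as in the proof of Theorem \ref{sec3-3-1}, gives
\[
{\rm ord}_2 L(\eta_e,-1)=V:=1+2^{1-n}\Bigl(-1+\sum_{p\mid d}2^{f_p}\Bigr).
\]
For $e=1$ the same value follows from Proposition \ref{d-1}. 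So there are $2^{\tau(d)}-1$ terms with $e\ne d$, all of the same valuation $V$. Since $V\in 2^{1-n}\BZ$, each such term equals $\pi^{V2^{n-1}}$ times a unit, and every unit is $\equiv1 \pmod \pi$. Because $2^{\tau(d)}-1$ is odd, the sum of these terms has valuation exactly $V$.

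\textbf{Step 2 (the full divisor sum is highly divisible).} Using the second formula of Lemma \ref{bernoulli} with modulus $D$, write
\[
\sum_{e\mid d}L(\eta_e,-1)=-\frac{1}{2D}\sum_{a=1}^{D}\chi_n(a)a^2\prod_{p\mid d}\bigl(1+\psi_p(a)\bigr)
=2^{\tau(d)}\cdot\Bigl(-\frac{1}{2D}\sum_{\substack{a\le D\\ \psi_p(a)=1\ \forall p\mid d}}\chi_n(a)a^2\Bigr),
\]
where in the first expression $a$ runs over integers prime to $d$. The product $\prod_{p\mid d}(1+\psi_p(a))$ is at least formally equal to $\sum_{e\mid d}\psi_e(a)\psi_{e'}(a)^2$, and that is the identity I intend to use. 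One caveat: $\psi_e$ corresponds to $\BQ(\sqrt e)$ and need not equal $\prod_{p\mid e}\psi_p$ (a factor $\psi_{-4}$ can appear). I would handle this either by passing to the characters $\psi_{p^*}$ or by noting that $\chi_n$ absorbs $\psi_{-4}$ up to replacing $\chi_n$ by another character of the same $2$-power conductor. I would then bound the valuation of the restricted sum from below, aiming for ${\rm ord}_2\ge 1-2^{1-n}$, by repeating the pairing $a\leftrightarrow D/2-a$ and $a\leftrightarrow D/4-a$ of Lemma \ref{no-D} and Proposition \ref{mod-4}. These pairings preserve the condition $\psi_p(a)=1$ because $D/4$ is divisible by $d$ and each $\psi_p$ is even. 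This would give
\[
{\rm ord}_2\Bigl(\sum_{e\mid d}L(\eta_e,-1)\Bigr)\ge \tau(d)+1-2^{1-n}.
\]

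\textbf{Step 3 (conclusion).} Since $n\ge f+2$, we have $V\le 1+2^{1-n}(\tau(d)2^{f}-1)\le 1+\tau(d)/2-2^{1-n}$, which is strictly less than $\tau(d)+1-2^{1-n}$. Subtracting the $e\ne d$ terms and using the strong triangle inequality then gives ${\rm ord}_2 L(\chi_n\psi_d,-1)=V$, which is \eqref{prop5-1-f1}.

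The main obstacle is Step 2. The exact identity behind the divisor sum must be correct with respect to the choice of quadratic characters, and the restricted character sum must be shown to be $2$-adically integral up to the factor $2^{1-2^{1-n}}$ or better. If a direct bound is hard, my fallback is to reduce the restricted sum to the $d=1$ computation of Proposition \ref{d-1}. To do this, I would decompose the set $\{a:\psi_p(a)=1\ \forall p\mid d\}$ into cosets modulo $2^{n+2}$ and run the iterative halving argument of Lemma \ref{sec3-1} on each coset.
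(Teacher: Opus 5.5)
Your proof has the same skeleton as the paper's. You induct on $\tau(d)$ and use the divisor sum $\sum_{e\mid d}L(\chi_n\psi_e\psi_{e'}^2,-1)=2^{\tau}S$, where $S$ is your restricted sum. The $2^{\tau}-1$ proper terms all have valuation $V$, and the strong triangle inequality finishes. Your parity argument in Step 1 (valuations in $2^{1-n}\BZ$, residue field $\BF_2$, an odd number of terms) fills a step the paper leaves implicit: the paper only says the proper terms have ``the same $2$-adic valuation'', which by itself does not rule out cancellation.

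The real difference is Step 2, and the obstacle you flag there is self-imposed. The paper only needs ${\rm ord}_2(2^{\tau}S)\ge\tau$. This follows from a single reflection $a\mapsto D/2-a$ (Lemma \ref{no-D} and Lemma \ref{lem2-1}(1)):
\[
2^{\tau}S=\sum_{a=1}^{D/4}\chi_n(a)\psi_d(a)\Bigl(a-\frac{D}{4}\Bigr)\prod_{p\mid d}\bigl(1+\psi_p(a)\bigr),
\]
so $S$ is visibly $2$-adically integral. In the inductive step $\tau\ge 2$, and your own estimate $V\le 1+\tau/2-2^{1-n}$ is already $<\tau$, so nothing more is needed.

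Your stronger bound ${\rm ord}_2(S)\ge 1-2^{1-n}$ is nevertheless true, and your fallback is essentially its proof, with no coset decomposition needed.
\begin{enumerate}
\item[(i)] For $2\le k\le n$ the reflection $a\mapsto D/2^k-a$ preserves the set $A$. This uses $4d\mid D/2^k$, not merely $d\mid D/4$, when some $p\equiv 3\ ({\rm mod}\ 4)$.
\item[(ii)] Lemma \ref{sec3-1} holds with the same proof for $D=2^{n+2}d$.
\item[(iii)] The iteration in Proposition \ref{d-1} therefore gives $S\equiv\prod_{k=2}^{n}(1-\zeta_{2^k})\sum_{a\in A,\,a\le 2d}\chi_n(a)a\ ({\rm mod}\ 2)$.
\end{enumerate}
What this buys is uniformity: Step 3 then also covers $\tau=1$, so the base case no longer needs Theorem \ref{sec3-3-1}.

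You should drop the $\psi_{-4}$ caveat. For odd $a>0$, $\psi_e(a)=\left(\frac{e}{a}\right)$ is a Jacobi symbol, which is multiplicative in $e$. Hence $\sum_{e\mid d}\psi_e(a)=\prod_{p\mid d}(1+\psi_p(a))$ holds exactly for odd $a$ prime to $d$, which is all that $\chi_n$ sees. The remedies you suggest would in fact break the argument. Replacing $\psi_p$ by $\psi_{p^*}$ makes $\psi_{p^*}$ odd for $p\equiv 3\ ({\rm mod}\ 4)$, and $\chi_n\psi_{-4}$ is odd. Characters like $\chi_n\psi_e$ built from them can then be odd, and their $L$-values at $-1$ vanish.
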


Our strategy is to prove that equation \eqref{prop5-1-f1} holds for all $n\geq f+2$.
The key idea is an induction method in \cite{DL}.

\begin{proof}
We proceed by induction on 
$\tau=\tau(d)$. When $\tau=1$, the result holds from previous arguments (see Corollary \ref{nd}), and we can take $n_d=f+2$ as before. Suppose the statement holds for all integers $d'$ with 
$\tau(d')<\tau$, and now we consider $d$ with $\tau(d)=\tau\geq 2$. 

By Lemma \ref{no-D}, for $n\ge 2$, we have
\[\begin{aligned}
\sum\limits_{b|d} L^{(D)}(\chi_n\psi_b,-1)= & \sum\limits_{b|d} \frac{1}{2} \sum\limits_{a=1}^{D/2} \chi_n(a)\psi_d(a)\psi_{d/b}(a)a \\
=& \frac{1}{2} \sum\limits_{a=1}^{D/2} \chi_n(a)\psi_d(a)a \prod\limits_{p|d} (1+\psi_p(a))\\
=&\sum\limits_{a=1}^{D/4} \chi_n(a)\psi_d(a)\left(a-\frac{1}{4}D\right) \prod\limits_{p|d} (1+\psi_p(a)).
\end{aligned}\]
Since ${\rm ord}_2\left( \prod\limits_{p|d} (1+\psi_p(a)) \right)\geq \tau$ for $a$ coprime to $d$, we obtain
\begin{equation}\label{5-1-f1}
  {\rm ord}_2\left(\sum\limits_{b|d} L^{(D)}(\chi_n\psi_b,-1)\right)\geq \tau.
\end{equation}
Now observe the formula:
\[{\rm ord}_2\left(L^{(D)}(\chi_n\psi_b,-1)\right)=\sum\limits_{p|(d/b)} {\rm ord}_2(1-\chi_n(p)\psi_b(p)p) +{\rm ord}_2(L(\chi_n\psi_b,-1)).\]
As shown in the proof of Theorem \ref{sec3-3-1}, for $n\geq f_p+2$, we have 
$${\rm ord}_2(1-\chi_n(p)\psi_b(p)p)=2^{1-n+f_p}.$$
 By induction hypothesis, for all $b<d$ and $n\geq f+2$, we have
\[{\rm ord}_2\left(L^{(D)}(\chi_n\psi_b,-1)\right)=1+2^{1-n}\left(-1+\sum_{p\mid d}2^{f_p}\right).\]
Since $n\geq f+2$ and
\[\sum_{p\mid d}2^{f_p}\leq 2^f\cdot\tau,\]
it can be shown that
\[1+2^{1-n}\left(-1+\sum_{p\mid d}2^{f_p}\right)<\tau.\]
Now, from the inequality \eqref{5-1-f1}, since all terms in the sum except the term $L(\chi_n\psi_d,-1)$, have the same $2$-adic valuation which is strictly less than $\tau$, we conclude that
\[{\rm ord}_2\left(L(\chi_n\psi_d,-1)\right)=1+2^{1-n}\left(-1+\sum\limits_{p|d}2^{f_p}\right)\]
for all $n\geq f+2$. This completes the proof. 
\end{proof}

\end{document}